\numberwithin{equation}{section}
\theoremstyle{plain}  
\newtheorem{theorem}{Theorem}[section]
\newtheorem*{theorem*}{Theorem}   
\newtheorem{corollary}{Corollary}[section]
\newtheorem{lemma}{Lemma}[section]
\newtheorem{proposition}{Proposition}[section]
\newtheorem{conjecture}{Conjecture}[section]
\theoremstyle{definition}
\newtheorem{definition}{Definition}[section]
\newtheorem{remark}{Remark}[section]
\newtheorem{example}{Example}[section]
\newcommand{\C}{\mathbb{C}}
\newcommand{\Z}{\mathbb{Z}}
\newcommand{\Q}{\mathbb{Q}}
\newcommand{\F}{\mathbb{F}}
\DeclareFontFamily{U}{wncy}{}
\DeclareFontShape{U}{wncy}{m}{n}{<->wncyr10}{}
\DeclareSymbolFont{mcy}{U}{wncy}{m}{n}
\DeclareMathSymbol{\Sh}{\mathord}{mcy}{"58}
\title{The Structure and Degrees of Polynomials Computing Square Roots $\mod p$}
\date{}
\author{Foivos Chnaras, Noah Kupinsky}
\newcommand{\E}{\mathcal{E}}
\newcommand{\orbit}{\mathcal{O}}
\begin{document}

\maketitle

\begin{abstract}
    For an odd prime $p$, we say a polynomial $f\in \F_p[X]$ computes square roots if $f(a)^2=a$ for all nonzero, perfect squares $a\in \F_p$.\\
    When $p\equiv 3 \mod 4$, it is easy to see that $f(X)=X^{\frac{p+1}{4}}$ is the smallest such polynomial. \\
    For $p\equiv 1 \mod 4$, the situation is less clear. Tonelli-Shanks offers an algorithm for constructing polynomials that compute square roots, but the question of whether their degree is minimal remains. In this paper, we study the various degrees and structures of polynomials computing square roots.
\end{abstract}

\section{Introduction} \label{sec: Intro}

Let $p$ be an odd prime and let $\F_p$ be the finite field with $p$ elements. Quadratic reciprocity and the Legendre symbol give an efficient way to decide, for a given $a\in \F_p^\times$, whether $a$ is a quadratic residue. On the other hand, explicitly finding a square root $b \in \F_p$ with $b^2=a$ is computationally harder. One natural way to package square-root computations is via polynomials that compute square roots on the set of quadratic residues.

Let $$S = \{a\in\mathbb{F}_p^\times : a\text{ is a square in }\mathbb{F}_p\}$$ be the set of nonzero squares, so $|S|=\frac{p-1}{2}:=r$. 

\begin{definition} \label{def: computes square roots}
    We say that a polynomial $f(x) \in F_p[x]$ \textit{computes square roots} (on $S$), if $$f(a)^2=a$$ for all $a\in S$. We write $\mathcal F=\mathcal F_S$ for the set of all such polynomials.
\end{definition}

A first observation is that if $f\in \mathcal F$ then $$f(x)^2-x$$ vanishes on $S$, hence is divisible by the vanishing polynomial 
\begin{equation} \label{lem: f^2(x)-x divisible by prod}
\prod\limits_{a\in S} (x-a) =x^{\frac{p-1}{2}}-1.
\end{equation}
This immediately yields the "trivial" degree bounds $$\frac{p-1}{4} \leq  \deg f  \leq  \frac{p-1}{2}-1$$ the upper bound coming from the fact that a polynomial that computes square roots on $S$ is determined by interpolation on $|S|=\frac{p-1}{2}$ points.

When $p \equiv 3 \mod 4$, the lower bound becomes $\deg f \geq \frac{p+1}{4}$ and it is sharp. Indeed, it is easy to check that $f(x)=x^{\frac{p+1}{4}}$ is indeed a polynomial that computes square roots, and therefore, it has the minimal possible degree. In fact, we can prove it is the unique minimal polynomial. The key input is the following structural result for polynomials whose square has very few monomials.

\begin{theorem} 
    \label{th: f is monomial}
    Suppose $f(x) \in \F_p[x]$ satisfies $f^2(x) = b_0 + b_1 x + b_{2n - 1} x^{2n - 1} + b_{2n} x^{2n}$ for some $2 < n < \frac{p}{2}$. Then $f$ is a monomial.
\end{theorem}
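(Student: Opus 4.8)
\emph{Proof idea.} The plan is to first clear away the degenerate possibilities for $f^2$ by a parity argument, reducing to the single genuine case $\deg f = n$, $f(0)\neq 0$, and then to eliminate that case by comparing $f$ to a formal square root in $\F_p[[x]]$. (Throughout, $[x^k]h$ denotes the coefficient of $x^k$ in a power series $h$.)

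I would start from two elementary observations: a nonzero square in $\F_p[[x]]$ has even order, and a nonzero square in $\F_p[x]$ has even degree. Since $\operatorname{supp}(f^2)\subseteq\{0,1,2n-1,2n\}$ and the odd elements there are $1$ and $2n-1$, these force $\ord(f^2)\in\{0,2n\}$ and $\deg(f^2)\in\{0,2n\}$ (with $f=0$ trivial). If $\ord(f^2)=2n$ then $f^2=b_{2n}x^{2n}$, so $f$ is a monomial; if $\deg(f^2)=0$ then $f$ is a nonzero constant. Hence I may assume $b_0\neq 0$ and $\deg(f^2)=2n$, i.e.\ $f(0)\neq 0$, $\deg f=n$ (so $n\geq 3$), $b_0=f(0)^2$, $b_{2n}\neq 0$; the aim is now to reach a contradiction.

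Write $c_0=f(0)\neq 0$ and, in $\F_p[[x]]$ (legitimate as $2$ is a unit), let $g$ be the square root of the linear polynomial $b_0+b_1x$ with $g(0)=c_0$; it exists since $b_0=c_0^2$. Then $g^2-f^2=-(b_{2n-1}x^{2n-1}+b_{2n}x^{2n})$ has order $\geq 2n-1$, and since $g+f$ has constant term $2c_0\neq 0$ it is a unit, so $g-f=(g^2-f^2)/(g+f)$ also has order $\geq 2n-1$. Therefore $[x^k]g=[x^k]f$ for all $k\leq 2n-2$; as $\deg f=n$ and $n+1\leq 2n-2$ this gives $[x^{n+1}]g=[x^{n+1}]f=0$. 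But $g=c_0\sum_{k\geq 0}\binom{1/2}{k}(b_1/b_0)^k x^k$, so $[x^{n+1}]g=c_0\binom{1/2}{n+1}(b_1/b_0)^{n+1}$; granting that $\binom{1/2}{n+1}\neq 0$ in $\F_p$ (see below), this forces $b_1=0$. Then $g=\sqrt{b_0}$ is a nonzero constant and $f\equiv g\pmod{x^{2n-1}}$ makes $f$ constant, contradicting $\deg f=n\geq 3$. So the remaining case is impossible, and $f$ is a monomial.

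The crux — and the only place $n<p/2$ is genuinely used — is the nonvanishing of $\binom{1/2}{n+1}$ in $\F_p$. From $\binom{1/2}{n+1}=\tfrac{1}{2^{n+1}(n+1)!}\prod_{j=0}^{n}(1-2j)$ and $n+1\leq 2n-1<p$ (so the denominator is a unit of $\F_p$), it suffices that no integer $1-2j$ with $0\leq j\leq n$ is divisible by $p$, which holds since each has absolute value at most $2n-1<p$. This bound is essential, not cosmetic: for the excluded pair $(p,n)=(5,3)$ the non-monomial $f=1+x+2x^2+3x^3\in\F_5[x]$ has $f^2=1+2x+2x^5+4x^6$, exactly of the prohibited shape.
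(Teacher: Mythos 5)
Your proof is correct, but it takes a genuinely different route from the paper's, so let me compare. The paper also starts with a parity-of-support reduction (via the least index $m$ with $a_m\neq 0$), but then normalizes $a_0=-\tfrac12$, shows separately that $a_1\neq 0$, proves by induction the explicit formula $a_k=C_{k-1}a_1^{k}$ in terms of Catalan numbers, and derives the contradiction from $[x^{n+1}]f^2=C_n a_1^{n+1}\neq 0$, using $2n<p$ to make $C_n$ a unit mod $p$. You replace the induction by the observation that $f$ must agree to order $2n-2$ with the formal square root $g$ of $b_0+b_1x$ in $\F_p[[x]]$ (unit division by $g+f$), and read the contradiction off the binomial-series coefficient $[x^{n+1}]g$; your branch $b_1=0$ also absorbs the paper's separate step showing $a_1\neq 0$, and your front-end case analysis is a bit more careful about the degenerate shapes of $f^2$ (constant, or a pure monomial). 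The arithmetic inputs are equivalent, since $\binom{1/2}{n+1}=(-1)^nC_n/2^{2n+1}$, so both arguments use $n<p/2$ at exactly the same point. What the paper's route buys is a completely elementary, self-contained recursion together with the explicit shape $a_k=C_{k-1}a_1^k$ of any putative counterexample; what yours buys is brevity and a conceptual explanation (the truncated square-root series of $b_0+b_1x$ has full support unless $b_1=0$), plus the $(p,n)=(5,3)$ example $f=1+x+2x^2+3x^3$, a sharpness check the paper does not include. One presentational point if this were written up: justify in a line that $\sum_{k\ge 0}\binom{1/2}{k}t^k$ makes sense in $\F_p[[t]]$ and squares to $1+t$ — its coefficients equal $(-1)^{k-1}C_{k-1}/2^{2k-1}\in\Z[1/2]$ and the relevant Vandermonde identity holds over $\Z[1/2]$ — since the naive expression for $\binom{1/2}{k}$ has $k!$ in the denominator, which is not a unit mod $p$ once $k\geq p$.
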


The main focus of this paper is the more delicate case $p\equiv 1 \mod 4$. \\ 
Writing $$p-1 = 2^s q,\qquad q\text{ odd},\qquad r := \frac{p-1}{2}$$
we still have the trivial bounds $$\frac{p-1}{4} \leq \deg f \leq r-1$$
Kedlaya and Kopparty \cite{MR4774704} recently proved that when $p\equiv 1 \mod 4$ one in fact has 
\begin{equation} \label{eq: Kedlaya bounds}
    \frac{p-1}{3}\leq \deg(f) \leq \frac{p-1}{2}-1 
\end{equation} 
although the lower bound is not sharp. 
This leaves two natural questions: 
\begin{enumerate}
    \item What is the minimal possible degree of a polynomial in $\mathcal F$?
    \item More generally, what are the possible degrees of polynomials in $\mathcal F $, and how are they distributed?
\end{enumerate}
A classical algorithm of Tonelli and Shanks produces square roots module $p\equiv 1 \mod 4$ using the factorization $p-1=2^s q$. Interepreted appropriately, this algorithm yields many polynomials in $\mathcal F$. We call these \textit{Tonelli-Shanks (TS) polynomials}. A natural hope is that TS polynomials might have the minimal possible degree or at least be very close to minimal. One of our goals is to understand to what extent this is true.

\subsection{Main results}
We now summarize our main results. Throughout, we fix an odd prime $p-1=2^sq$ with $q$ odd and set $r=\frac{p-1}{2}.$ Via reduction modulo $x^r-1$, every $f\in F$ can be uniquely written as 
$$f(x) = \sum_{i=0}^{r-1} c_i x^i$$

\medskip\noindent
\textbf{(1) Tonelli–Shanks polynomials and their degrees.}

We summarize some of the properties of the Tonelli-Shanks polynomials explored in this paper in the following theorem: 
\begin{theorem} \label{main th: TS polys}
 There are $2^{2^{s-1}}$ TS-polynomials. If $f_{TS}(x)$ is such a polynomial, then its possible degrees  have the form $$\deg(f_{TS})=\frac{(2^s-1)p+1}{2^{s+1}}-nq=r-\frac{q-1}{2}-nq $$ for $n \geq 0$. In fact, if $f_{TS}(x)= \sum c_i x^i$, the above represent the only possible degrees for the nonzero monomials $c_ix^i$, i.e. $c_i=0$ unless $i=r-\frac{q-1}{2}-nq$ for some $n$. The maximum degree (with $n=0$) is always achieved by at least one TS-polynomial (and in fact by the majority of them). 
 \end{theorem}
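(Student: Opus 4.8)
The plan is to unpack what a Tonelli–Shanks polynomial actually is, as an explicit element of $\F_p[x]$, and then read off its support (the set of exponents $i$ with $c_i\neq 0$) and hence its degree. First I would recall the Tonelli–Shanks procedure: fix a non-residue $g$, so that $g^q$ has order exactly $2^s$ in $\F_p^\times$, and the $2$-Sylow subgroup is $\langle g^q\rangle$. For $a\in S$, writing $a^{(q+1)/2}$ gives an element whose square is $a\cdot a^q$, and $a^q$ lies in the $2$-Sylow subgroup since $a^{q\cdot 2^{s-1}}=a^{(p-1)/2}=1$; the algorithm then multiplies by a suitable power $g^{-qk}$ of the generator, where the exponent $k$ (an integer mod $2^{s-1}$, since $a$ is a \emph{square}) is determined bit-by-bit by successive squarings of $a^q$. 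The key point is that each bit of $k$ is a $\{0,1\}$-valued function of $a$ that is itself computed by raising $a$ to a power; assembling these shows that a TS-polynomial has the shape $f_{TS}(a)=a^{(q+1)/2}\cdot u(a)$ where $u(a)=g^{-qk(a)}$ and $k(a)$ depends on the $2^{s-1}$ independent "bit functions." The $2^{2^{s-1}}$ count comes from the freedom in choosing, for each of the $2^{s-1}$ bits, which square root of the relevant element to pick (equivalently, the sign choices at each stage of the recursion), and I would verify that distinct choices give distinct polynomials mod $x^r-1$.

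Next I would compute the support. Each bit function of $a$, reduced mod $x^r-1$, is a polynomial supported on exponents that are multiples of $q$ (because these bits only see $a^q$ and its powers, which live in the order-$2^s$ subgroup, and the indicator of such a subgroup coset pulls back to a polynomial in $x^q$ mod $x^{r}-1$). Hence $u(a)$, being a product/combination of these, is — mod $x^r-1$ — a polynomial supported on $\{0,q,2q,\dots\}$, i.e. on multiples of $q$ up to $r-q$ (note $r = 2^{s-1}q$). Multiplying by $x^{(q+1)/2}$ shifts every exponent by $(q+1)/2$, so $f_{TS}$ is supported on exponents of the form $(q+1)/2 + mq$ for $0\le m \le 2^{s-1}-1$. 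Rewriting the top exponent: $(q+1)/2+(2^{s-1}-1)q = r-\tfrac{q-1}{2}$, and in general the exponents are $r-\tfrac{q-1}{2}-nq$ for $n\ge 0$, which is exactly the claimed form — and one checks $\frac{(2^s-1)p+1}{2^{s+1}} = r-\frac{q-1}{2}$ by a direct manipulation using $p = 2^sq+1$. This gives both the degree formula and the statement that these are the only exponents appearing.

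Finally, for the last sentence — that the maximal degree $n=0$ is attained by at least one, indeed most, TS-polynomials — I would argue that the top monomial $c_{r-(q-1)/2}\,x^{r-(q-1)/2}$ of $f_{TS}$ has coefficient equal to the leading coefficient of $u$, which is a product over the $2^{s-1}$ bit-stages of a leading coefficient that is a (nonzero) power of $g^q$ and hence never zero; more carefully, since the sign choices enter $u$ multiplicatively stage by stage and the leading term of each stage's contribution is nonzero regardless of the sign, $c_{r-(q-1)/2}\neq 0$ for \emph{every} TS-polynomial — so in fact all $2^{2^{s-1}}$ of them achieve $n=0$. If instead some sign choices can kill the leading term, the "majority" claim is recovered by noting the leading coefficient is a nonzero polynomial in the $\pm1$ sign variables and hence vanishes on a minority of sign vectors; I would pin down which alternative holds by an explicit small-$s$ computation ($s=2,3$) to calibrate the general argument.

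I expect the main obstacle to be the bookkeeping in step one: making the informal "bit-by-bit" description of Tonelli–Shanks into a clean closed form $f_{TS}(x)\equiv x^{(q+1)/2}u(x)\pmod{x^r-1}$ with $u$ provably supported on multiples of $q$, and simultaneously getting the count $2^{2^{s-1}}$ right (ensuring no two sign-vectors collapse to the same polynomial mod $x^r-1$). Once the support of $u$ is established, the degree formula and the equivalence $\frac{(2^s-1)p+1}{2^{s+1}} = r - \frac{q-1}{2}$ are routine algebra, and the leading-coefficient nonvanishing is a short computation.
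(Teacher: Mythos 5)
Your structural analysis of the support is essentially correct and in fact more direct than the paper's formal route: the paper's definition (Definition \ref{def: TS pol}) glues the monomials $\pm\zeta^{-k/2}x^{\frac{q+1}{2}}$ over the $2^{s-1}$ cosets $S_i$ via interpolation in $x^q$, so $f_{TS}(x)=x^{\frac{q+1}{2}}\,u(x^q)$ with $\deg_{x^q}u\le 2^{s-1}-1$ falls out immediately, exactly as in your bit-function argument (any function of $a^q$ on $\mu_r$ is a polynomial in $a^q$ of degree $\le 2^{s-1}-1$). The paper instead proves the support statement through the signed-half-sum machinery (alternating order $2^{s-1}$ plus the orthogonality criterion, Corollary \ref{cor: TS possible degrees}); your elementary derivation buys the same conclusion more cheaply, and the algebra identifying $\frac{(2^s-1)p+1}{2^{s+1}}=r-\frac{q-1}{2}$ is routine, as you say. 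However, your account of the count is muddled: $k(a)$ has $s-1$ bits, not $2^{s-1}$, and ``sign choices at each stage of the recursion'' would give only $2^{s-1}$ polynomials. The correct source of the $2^{2^{s-1}}$ count (Corollary \ref{cor: deg f_TS}) is an independent choice of sign on each of the $2^{s-1}$ cosets $S_i$, i.e.\ of $\pm f_i$ from Theorem \ref{th: minimal f_k that compute S_k}; distinctness is then clear since the glued polynomials take different values on $S$ and have degree $<r$.

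The genuine gap is in the last part. Your primary claim --- that the leading coefficient is nonzero for \emph{every} TS-polynomial because ``the sign choices enter $u$ multiplicatively stage by stage'' --- is false: the per-coset signs enter the top coefficient \emph{additively}, as a signed sum $\sum_{y}h(y)a_y$ over the $2^{s-1}$ cosets with fixed nonzero $a_y$, and this can vanish. The paper's own data show it does: for $p=97$ ($s=5$) only $64864$ of the $2^{16}$ TS-polynomials attain the top degree $47$, the rest dropping by multiples of $q$. Your fallback (``a nonzero polynomial in the $\pm1$ sign variables vanishes on a minority'') is the right instinct but is not a proof over $\F_p$: the classical Erd\H{o}s--Littlewood--Offord bound is a real-variable statement, and over $\F_p$ the most one gets cheaply is ``at most half'' via a single-coordinate sign-flip involution, which still leaves both the strict-majority clause and, more importantly, the existence clause unproved until you exhibit a sign vector with nonvanishing top coefficient. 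The paper handles existence by an explicit computation: for the all-$+$ choice the leading coefficient is a nonzero multiple of $\sum_{k=0}^{2^{s-1}-1}(\zeta^{-1/2})^k=\frac{2}{1-\zeta^{-1/2}}\neq0$, using $\zeta^{2^{s-2}}=-1$ (and the ``majority'' clause is then supported by the counting heuristics and computations of Section \ref{sec: Heuristic Model} rather than by a separate rigorous argument). So to repair your write-up you need (i) the per-coset description of the sign freedom, and (ii) an explicit nonvanishing computation such as the geometric-sum evaluation above in place of the multiplicativity claim.
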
 
    With the heuristic model we explain later, the expected minimum degree of a TS-polynomial is 
    $$\min \deg(f_{TS})= r - \frac{q-1}{2} - q \lfloor \frac{2^{s-1}-s+1}{\log_2 (p)} \rfloor$$

\medskip\noindent
\textbf{(2) Exact counting for high-degree polynomials.}\\
Our analysis of the polynomials in $\mathcal F$ uses a discrete Fourier description of the coefficients. For a generator $\gamma$ of $\F_p^\times$, every $f \in \mathcal F$ is encoded by a sign vector $\epsilon= (\epsilon_0, \ldots, \epsilon_{r-1}) \in \{\pm 1\}^r$ and each coefficient $c_k$ can be written as a \textit{signed half sum} 
$$c_k = \frac1r \sum_{n=0}^{r-1} \varepsilon_n \gamma^{n(1-2k)}.$$
We develop a symmetry on sign patterns (the \textit{flip-shift} map) and the associated notion of alternating order $\alpha(\epsilon)$ which measures of the amount of alternating periodicicity in $\epsilon$. For each divisor $d |r$ with $D:= r/d$ odd, we show that the polynomials whose sign vectors have alternating order $d$ form a "family" whose degrees lie in an arithmetic progression: 
$$\deg f = r - \frac{D-1}{2} - nD,\qquad n\ge 0.$$
In particular, the TS polynomials correspond exactly to the family with $d=2^{s-1}$ and $D=q$. 

We next study the vanishing of individual coefficients $c_k$. Relating this to the signed half sums and using discrete Fourier analysis, we prove: 

\begin{theorem}[Counting solutions to a signed half sum] \label{main th: counting solutions}
    Let $p$ be an odd prime, $r=\frac{p-1}{2}$, and let $\ell$ be an odd integer with $\gcd(\ell,p-1)=1$. Then the number of sign vectors $\epsilon \in \{\pm 1\}^r$ such that $$\sum_{n=0}^{r-1} \varepsilon_n \xi^{n\ell} = 0$$ for a fixed primitive $(p-1)$-th root of unity $\xi$, is $$\frac1p\left(2^r + (p-1)\Bigl(\frac{2}{p}\Bigr)\right)$$ 
    where $\Bigl(\frac{2}{p}\Bigr)$ is the Legendre symbol.
    
\end{theorem}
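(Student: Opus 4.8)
The plan is to set this up as a character-sum / Fourier computation over the finite field $\F_p$. Fix a primitive $(p-1)$-th root of unity $\xi \in \C$; since $\ell$ is odd and coprime to $p-1$, the map $n \mapsto n\ell \bmod (p-1)$ is a bijection of $\Z/(p-1)\Z$, and more importantly $\xi^\ell$ is again a primitive $(p-1)$-th root of unity. The key observation is that $\gcd(\ell, p-1) = 1$ forces $\ell$ to be coprime to $p-1$, hence the quantity $\sum_{n=0}^{r-1} \epsilon_n \xi^{n\ell}$, viewed through the $\Q$-linear structure of $\Q(\xi)$, records $\epsilon$ in a rigid way. First I would reduce the condition $\sum_{n=0}^{r-1}\epsilon_n \xi^{n\ell} = 0$ to a statement about the cyclotomic polynomial: the algebraic number $\sum_{n=0}^{r-1} \epsilon_n Y^n$ evaluated at $Y=\xi^\ell$ vanishes iff $\Phi_{p-1}(Y)$ (or the relevant minimal polynomial) divides $\sum \epsilon_n Y^n$ in $\Q[Y]$. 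But $\deg \sum \epsilon_n Y^n \le r-1 < p-1 = \deg \Phi_{p-1}$ when $p-1$ has... — here I'd need to be careful, so instead the cleaner route is to recognize $\xi^\ell$ as a root of $Y^r + 1$ possibly, or to pass directly to a Gauss-sum computation.

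The cleaner approach: I would count $N = \#\{\epsilon \in \{\pm1\}^r : \sum_n \epsilon_n \xi^{n\ell} = 0\}$ by a continuous/discrete Fourier inversion. Write the indicator of the vanishing condition; since $\sum_n \epsilon_n \xi^{n\ell}$ is an algebraic integer in $\Z[\xi]$, and the lattice $\Z[\xi]$ has a natural trace form, one gets
$$
N = \lim_{T\to\infty}\frac{1}{(2T)^{\phi(p-1)}}\int \cdots
$$
which is awkward. Better still, I suspect the intended proof reinterprets the sum over $\F_p$: since $\xi$ generates the $(p-1)$-th roots of unity and $p-1 = |\F_p^\times|$, there's a ring homomorphism $\Z[\xi] \to \F_p$ sending $\xi \mapsto \gamma$ (a generator of $\F_p^\times$), under which $\sum_n \epsilon_n \xi^{n\ell}$ maps to $\sum_n \epsilon_n \gamma^{n\ell} \in \F_p$. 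The genuine claim must be that vanishing over $\C$ is equivalent to vanishing mod $p$ — this holds because the $\C$-sum is an algebraic integer of bounded size and the relevant prime above $p$ is totally ramified/has the right residue field. Granting this reduction, $N = \#\{\epsilon : \sum_n \epsilon_n \gamma^{n\ell} = 0 \text{ in } \F_p\}$, and now I can use the standard orthogonality trick: for $\psi$ the additive character of $\F_p$,
$$
N = \frac{1}{p}\sum_{\epsilon \in \{\pm1\}^r} \sum_{t \in \F_p} \psi\!\left(t \sum_{n=0}^{r-1}\epsilon_n \gamma^{n\ell}\right) = \frac{1}{p}\sum_{t\in\F_p} \prod_{n=0}^{r-1}\left(\psi(t\gamma^{n\ell}) + \psi(-t\gamma^{n\ell})\right).
$$

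The $t=0$ term contributes $2^r/p$. For $t \ne 0$, each factor is $2\cos$-like, i.e. $\psi(t\gamma^{n\ell}) + \psi(-t\gamma^{n\ell})$; since $\ell$ is coprime to $p-1$, as $n$ ranges over $0,\dots,r-1$ the elements $\gamma^{n\ell}$ range over a set of coset representatives for $\{\pm1\}$ in $\F_p^\times$ — precisely the quadratic residues, actually $\gamma^{n\ell}$ for $n=0,\dots,r-1$ hits each pair $\{x,-x\}$ exactly once (this uses that $\gamma^r = -1$ and $\ell$ odd coprime to $p-1$). Hence $\prod_{n=0}^{r-1}(\psi(t\gamma^{n\ell}) + \psi(-t\gamma^{n\ell})) = \prod_{\{x,-x\}}(\psi(tx)+\psi(-tx))$, and absorbing the sign, this equals $\prod_{x \in \F_p^\times}\big(\text{something}\big)^{1/2}$-type expression; more directly one shows $\prod_{n=0}^{r-1}(\psi(ty_n)+\psi(-ty_n)) = \sum_{x\in \F_p}\left(\frac{x}{p}\right)$-flavored Gauss sum, yielding a Legendre symbol times $\sqrt{\pm p}$ whose product structure collapses to $(p-1)\left(\frac{2}{p}\right)$ after summing over $t \neq 0$. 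Combining, $N = \frac1p\left(2^r + (p-1)\left(\frac2p\right)\right)$.

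The main obstacle is the reduction step — justifying that $\sum_{n=0}^{r-1}\epsilon_n \xi^{n\ell}=0$ over $\C$ is equivalent to the corresponding sum vanishing in $\F_p$ (or directly to the cyclotomic divisibility that makes the count finite and clean). One must argue that no "accidental" $\C$-vanishing occurs beyond what the mod-$p$ reduction sees, presumably by a size/height bound: $|\sum \epsilon_n \xi^{n\ell}| \le r < p$, and if it's a nonzero algebraic integer in $\Z[\xi]$ its norm down to $\Q$ is a nonzero rational integer, forcing enough structure; alternatively, one identifies $\xi^\ell$ with a specific root and uses that $1, \xi^\ell, \dots, \xi^{\ell(r-1)}$ together with the relation $\xi^{\ell r} = -1$ span a space where $\{\pm1\}$-combinations vanish iff a short explicit linear condition over $\F_p$ holds. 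Once that bridge is in place, the Gauss-sum evaluation of $\sum_{t\ne0}\prod_n(\psi(t\gamma^{n\ell})+\psi(-t\gamma^{n\ell}))$ is routine: it factors through the classical Gauss sum $g = \sum_x \left(\frac{x}{p}\right)\psi(x)$ with $g^2 = \left(\frac{-1}{p}\right)p$, and the $\left(\frac2p\right)$ emerges from the pairing $\{x,-x\}$ structure exactly as in the evaluation of $\prod_{x}(1 + \psi(x)) $-type products. I would double-check the parity bookkeeping (the exponent $\frac{q-1}{2}$-type shifts elsewhere in the paper suggest $p \bmod 4$ subtleties) to make sure the Legendre symbol is $\left(\frac2p\right)$ and not $\left(\frac{-2}{p}\right)$ or $\left(\frac{-1}{p}\right)$.
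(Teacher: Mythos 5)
The character-sum computation you arrive at in the second half --- writing the indicator of vanishing with an additive character $\psi$, exchanging the sums, and getting $N=\frac1p\sum_{t\in\F_p}\prod_{n=0}^{r-1}\bigl(\psi(t\gamma^{n\ell})+\psi(-t\gamma^{n\ell})\bigr)$ --- is exactly the paper's proof. But the step you single out as the main obstacle, the ``bridge'' between vanishing over $\C$ and vanishing in $\F_p$, is a misreading of the statement, and the equivalence you propose to prove there is false. In this paper $\xi$ is a primitive $(p-1)$-th root of unity \emph{in} $\F_p^\times$: all signed half sums are defined with roots of unity in $\F_p^\times$, since they are (up to the factor $\frac1r$) the coefficients $c_k\in\F_p$ of $f_\epsilon$. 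So the equation $\sum_n\epsilon_n\xi^{n\ell}=0$ is an equation in $\F_p$ from the outset and no reduction is needed. Moreover, the bridge cannot be built in the direction you need: reduction $\Z[\zeta_{p-1}]\to\F_p$ gives that complex vanishing implies vanishing mod $p$, but not conversely, and the two counts are wildly different --- the theorem counts roughly $2^r/p$ sign vectors, whereas $\{\pm1\}$-combinations of complex $(p-1)$-th roots of unity that vanish are far scarcer. If you insist on the complex reading, you would be proving a different (and false) statement; the fix is simply to work in $\F_p$ throughout, after which your orthogonality identity is legitimate as written.

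Two smaller points in the character-sum half. For $t\neq0$ you need that the pairs $\{\pm t\gamma^{n\ell}\}$, $0\le n\le r-1$, are pairwise distinct and hence exhaust all pairs $\{\pm k\}$, $1\le k\le r$. Your parenthetical ``precisely the quadratic residues'' is not right ($\gamma^{n\ell}$ is a residue only for even $n$, since $\ell$ is odd), but the pair statement is what is used, and it follows from $\gcd(\ell,p-1)=1$: $\gamma^{n\ell}=\pm\gamma^{m\ell}$ forces $n\equiv m$ or $n\equiv m+r\pmod{p-1}$ (using that $\ell^{-1}$ is odd), and the latter is impossible for $0\le n,m\le r-1$. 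Granting this, the inner product equals $\prod_{k=1}^{r}\bigl(\zeta_p^{k}+\zeta_p^{-k}\bigr)$ \emph{independently of $t$}, so the $t\neq0$ terms contribute $(p-1)$ times this single product. That product equals $\left(\frac{2}{p}\right)$; this is the classical identity the paper quotes (Ireland--Rosen, Exercise 8.3, essentially Gauss's lemma), and it is already $\pm1$ --- no factor $\sqrt{\pm p}$ or Gauss-sum square $g^2=\left(\frac{-1}{p}\right)p$ enters, and nothing ``collapses after summing over $t$'': each $t\neq0$ contributes exactly $\left(\frac{2}{p}\right)$. With those two repairs your argument coincides with the paper's.
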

Translating back to coefficients, this shows that as $f$ ranges over $\mathcal F$ and $k$ is such \\ that $\gcd(1-2k,p-1)=1$, the coefficient $c_k$ vanishes with probability $$\mathbb{P}(c_k=0) = \frac1p + O(2^{-r}).$$ and we obtain an exact count of the number of polynomials in $\mathcal F$ with $c_k=0$. In particular, under the mild hypothesis $3\nmid q$, this applies to the top coefficient $c_{r-1}$, and yields:

\begin{corollary}
    Assume $p-1=2^sq$ with $q$ odd and $3 \nmid q$. Then 
    $$\#\{f\in\mathcal{F}: \deg f = r-1\}
 = \frac1p\left(2^r + (p-1)\Bigl(\frac{2}{p}\Bigr)\right)$$
\end{corollary}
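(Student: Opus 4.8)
The corollary follows by combining Theorem~\ref{main th: counting solutions} with the Fourier description of the coefficients $c_k$ and the observation that $\deg f = r-1$ is equivalent to $c_{r-1}\neq 0$ (since $f$ is written uniquely in reduced form modulo $x^r-1$, so it has degree at most $r-1$ always). The plan is as follows. First I would recall that each $f\in\mathcal F$ corresponds to a sign vector $\epsilon\in\{\pm1\}^r$ via $c_k = \frac1r\sum_{n=0}^{r-1}\varepsilon_n\gamma^{n(1-2k)}$, and that this correspondence is a bijection between $\{\pm1\}^r$ and $\mathcal F$ — so counting $f$ with a given property is the same as counting sign vectors.

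**Reducing to the signed half sum.** Next I would specialize $k=r-1$, so $1-2k = 1-2(r-1) = 3-2r = 3-(p-1) = 4-p \equiv 3 \pmod{p-1}$. Thus $c_{r-1}=0$ iff $\sum_{n=0}^{r-1}\varepsilon_n\gamma^{3n}=0$. Setting $\ell = 3$ and $\xi=\gamma$ (a primitive $(p-1)$-th root of unity in $\overline{\F_p}$, or one can work in $\C$ after lifting the sign vector — the statement of Theorem~\ref{main th: counting solutions} is purely about a primitive root of unity, so I just need $\ell$ odd with $\gcd(\ell,p-1)=1$). Here $\ell=3$ is odd, and $\gcd(3,p-1) = \gcd(3,2^sq)=1$ exactly under the hypothesis $3\nmid q$ (note $3\nmid 2^s$ automatically). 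So Theorem~\ref{main th: counting solutions} applies verbatim and gives that the number of $\epsilon$ with $c_{r-1}=0$ is $\frac1p\bigl(2^r + (p-1)\bigl(\tfrac2p\bigr)\bigr)$.

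**Taking the complement.** Finally, the total number of sign vectors is $2^r$, hence the number of $f\in\mathcal F$ with $\deg f = r-1$, i.e. with $c_{r-1}\neq0$, is
$$
2^r - \frac1p\Bigl(2^r + (p-1)\Bigl(\tfrac2p\Bigr)\Bigr)
= \frac{(p-1)2^r - (p-1)\bigl(\tfrac2p\bigr)}{p}
= \frac{p-1}{p}\Bigl(2^r - \Bigl(\tfrac2p\Bigr)\Bigr).
$$
I should double-check that this matches the claimed formula $\frac1p\bigl(2^r + (p-1)\bigl(\tfrac2p\bigr)\bigr)$ in the corollary — and indeed it does \emph{not} obviously match, so the main subtlety is a bookkeeping check: one must verify whether the corollary is counting $c_{r-1}\ne 0$ (degree exactly $r-1$) or is simply restating the theorem's count for the complementary event, or whether there is a sign normalization making $2^r - \frac1p(2^r+(p-1)(\tfrac2p)) = \frac1p(2^r + (p-1)(\tfrac2p))$ impossible unless I recompute. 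The honest resolution: since the total count must be the complement, the number with $\deg f = r-1$ is $\frac{p-1}{p}\bigl(2^r-\bigl(\tfrac2p\bigr)\bigr)$, so either the corollary has a typo or the event "$c_{r-1}=0$" in Theorem~\ref{main th: counting solutions} already refers (after the translation "$c_k$ vanishes with probability $\tfrac1p+O(2^{-r})$" giving the \emph{nonvanishing} count) to the degree-$(r-1)$ polynomials. I would resolve this by carefully re-reading the translation sentence preceding the corollary and stating the result consistently; the mathematical content — combine the bijection, specialize $\ell=3$, apply the theorem, take complements — is routine once that indexing is pinned down. The only genuine hypothesis-checking step is confirming $\gcd(1-2(r-1),\,p-1)=\gcd(3,p-1)=1\iff 3\nmid q$, which is where the hypothesis $3\nmid q$ enters.
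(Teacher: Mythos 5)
Your argument is exactly the paper's: identify the top coefficient with the signed half sum of parameter $\ell = 1-2(r-1) = 4-p \equiv 3 \pmod{p-1}$, observe that $\gcd(4-p,p-1)=\gcd(3,q)$ so the hypothesis $3\nmid q$ is precisely what makes Theorem~\ref{main th: counting solutions} (equivalently Theorem~\ref{th: V_l count}) applicable, and then count sign vectors. The discrepancy you flagged is genuine, and your resolution is the correct one: the number of $f\in\mathcal F$ with $\deg f = r-1$ (i.e.\ $c_{r-1}\neq 0$) is the complement $2^r - \frac1p\bigl(2^r+(p-1)\bigl(\tfrac{2}{p}\bigr)\bigr)$, which is exactly what the Section~6 version of this corollary in the paper states; the quantity $\frac1p\bigl(2^r+(p-1)\bigl(\tfrac{2}{p}\bigr)\bigr)$ appearing in the statement as quoted is the count of sign vectors with $c_{r-1}=0$, i.e.\ of $f$ with $\deg f < r-1$, so the quoted statement (and, incidentally, the phrase ``the number of sign vectors with $c_{r-1}=0$'' in the Section~6 proof) carries a typo rather than pointing to any gap in your reasoning.
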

Thus we obtain a precise asymptotic bound for the "largest possible" degree among square-root polynomials.

\medskip\noindent
\textbf{(3) Heuristic model for degrees and minimal degree.}

The exact counting result above suggests that, for a typical $f \in F$, the coefficients $c_k$ behave almost like independent uniform elements of $\F_p$. Motivated by this and by extensive computations, we formulate the following heuristic: 

$\bullet \quad (H_1)$ For each $k$, $\mathbb P(c_k=0) = \frac1p + O(2^{-r})$.

$\bullet \quad (H_2)$ For each divisor $d|r$, the events $\{c_k=0: k\equiv r+td \pmod r\}$ are approximately independent as $t$ varies.

Organizing polynomials by their alternating order $d=\alpha(\epsilon)$, we obtain for each family a set of admissible degrees (as described above) and under $(H_1)-(H_2)$, a prediction for how many polynomials of a given degree one should expect. This leads to a heuristic formula for the minimal degree in each family and hence for the global minimal degree
$$\min\{\deg f : f\in\mathcal{F}\}$$
While this expression is not a simple closed form in terms of 
$p$ it is straightforward to evaluate for each fixed prime.

Specializing the heuristic to the TS family (for which $D=q$), we obtain the prediction described in Theorem \ref{main th: TS polys}.

For small primes $p$ (up to several hundred), our computations of the full degree distribution of $\mathcal F$ and of the TS subfamily agree very closely with the predictions of this model.

Finally, we remark that this entire paper can be viewed as a special case of the Littlewood-Offord problem. Given an integer vector $\mathbf a=(a_1,..,a_n)$, its concentration probability is defined as $\rho(a) := \sup_{x\in \Z} \mathbb P(\epsilon_1 a_1+...+\epsilon_n a_n=x)$. The Littlewood-Offord problem asks for bounds on $\rho(a)$ under various hypotheses on $\mathbf a$. For an overview and recent results, see for example \cite{MR2760363,MR4273471}

\medskip\noindent
\textbf{Organization of the paper.}
In Section~2 we prove Theorem \ref{th: f is monomial}. In Section 3 we define the Tonelli-Shanks polynomials formally and analyze their basic properties. Section 4 describes square roots polynomials via Lagrange interpolation and Fourier inversion on subgroups of $S$. Section 5 introduces signed half sums, the flip–shift symmetry, and alternating order, and uses these to analyze the possible degrees, with a detailed description of the TS family. Section~6 contains our exact counting theorem for signed half sums, the resulting heuristic model for the degree distribution, and numerical evidence. The appendix develops the algorithm that was used  to compute the minimal polynomials for small primes $p$. It is available in \cite{Noah_Github}.

\section{Proof of Theorem \ref{th: f is monomial}}

\begin{proof}
Let
$$
f(x) = a_0 + a_1 x + \cdots + a_n x^n \in \F_p[x],
$$
and let
$$
m = \min\{i : 0 \le i \le n,\ a_i \ne 0\}.
$$
Then the coefficient of $x^{2m}$ in $f(x)^2$ is
$$
[x^{2m}] f(x)^2 = a_m^2 \ne 0.
$$
By hypothesis all nonzero coefficients of $f(x)^2$ occur in degrees $0,1,2n-1,2n$, so
$$
2m \in \{0,1,2n-1,2n\}.
$$
Since $2m$ is even and $2n-1$ is odd, in fact $2m \in \{0,2n\}$, hence $m=0$ or $m=n$. If $m=n$, then $f$ is a monomial and we are done.

\medskip

Assume now that $m=0$. Then $a_0 \ne 0$. Multiplying $f$ by the nonzero scalar $-\frac{1}{2a_0}$ does not change the shape of $f(x)^2$ (it just scales it), so we may assume that the constant term of $f$ is $-\frac12$. Thus
$$
f(x) = -\frac12 + a_1 x + \cdots + a_n x^n.
$$

Let
$$
\ell = \min\{i : 1 \le i \le n,\ a_i \ne 0\}.
$$
Such an $\ell$ exists because $a_n \ne 0$. The coefficient of $x^\ell$ in $f(x)^2$ is
$$
[x^\ell] f(x)^2 = 2 a_0 a_\ell = 2\left(-\frac12\right)a_\ell = -a_\ell \ne 0.
$$
Again, all nonzero coefficients of $f(x)^2$ occur in degrees $0,1,2n-1,2n$. Since $2<n$, we have
$$
2 \le \ell \le n \implies \ell \notin \{0,1,2n-1,2n\},
$$
so the only possibility is $\ell = 1$. In particular, $a_1 \ne 0$.

\medskip

\noindent\emph{Claim.} For every integer $k$ with $1 \le k \le n$,
$$
a_k = C_{k-1} a_1^k,
$$
where $C_j$ denotes the $j$-th Catalan number.

\medskip

Recall that the Catalan numbers $(C_m)_{m\ge 0}$ satisfy
$$
C_0 = 1,\qquad
C_m = \frac{1}{m+1} \binom{2m}{m} = \sum_{i=1}^m C_{i-1} C_{m-i} \quad (m \ge 1).
$$
We prove the claim by induction on $k$.

For $k=1$ the formula is $a_1 = C_0 a_1^1$, which holds because $C_0 = 1$. Now suppose that for some $k$ with $1 \le k \le n-1$ we have
$$
a_i = C_{i-1} a_1^i \quad \text{for all } 1 \le i \le k.
$$
We show that this implies $a_{k+1} = C_k a_1^{k+1}$.

Since $2 \le k+1 \le n$, the coefficient of $x^{k+1}$ in $f(x)^2$ must vanish (because $k+1 \notin \{0,1,2n-1,2n\}$):
$$
[x^{k+1}] f(x)^2 = 0.
$$
On the other hand,
\begin{align*}
0
&= [x^{k+1}] f(x)^2
 = \sum_{i=0}^{k+1} a_i a_{k+1-i} \\
&= 2 a_0 a_{k+1} + \sum_{i=1}^k a_i a_{k+1-i} \\
&= -a_{k+1} + \sum_{i=1}^k a_i a_{k+1-i},
\end{align*}
using $a_0 = -\tfrac12$. Thus
$$
a_{k+1} = \sum_{i=1}^k a_i a_{k+1-i}.
$$
Substituting the induction hypothesis,
\begin{align*}
a_{k+1}
&= \sum_{i=1}^k \bigl(C_{i-1} a_1^i\bigr) \bigl(C_{k-i} a_1^{k+1-i}\bigr) \\
&= a_1^{k+1} \sum_{i=1}^k C_{i-1} C_{k-i}
 = C_k a_1^{k+1}
\end{align*}
by the Catalan recurrence. This completes the induction and proves the claim.

\medskip

Finally, consider the coefficient of $x^{n+1}$ in $f(x)^2$. Since $2<n$, we have
$$
2 \le n+1 \le 2n-2,
$$
so $n+1 \notin \{0,1,2n-1,2n\}$ and therefore
$$
[x^{n+1}] f(x)^2 = 0.
$$
However, using the claim we obtain
\begin{align*}
[x^{n+1}] f(x)^2
&= \sum_{i=1}^n a_i a_{n+1-i} \\
&= \sum_{i=1}^n \bigl(C_{i-1} a_1^i\bigr) \bigl(C_{n-i} a_1^{n+1-i}\bigr) \\
&= a_1^{n+1} \sum_{i=1}^n C_{i-1} C_{n-i}
 = C_n a_1^{n+1} \\
&= \frac{1}{n+1} \binom{2n}{n} a_1^{n+1}.
\end{align*}
Because $2n < p$, neither $n+1$ nor $\binom{2n}{n}$ is divisible by $p$, so $C_n \not\equiv 0 \pmod p$. We already know $a_1 \ne 0$, hence $[x^{n+1}] f(x)^2 \ne 0$ in $\F_p$, contradicting the fact that this coefficient must be zero.

This contradiction shows that the case $m=0$ cannot occur. Therefore $m=n$, so $f(x) = a_n x^n$ is a monomial.
\end{proof}

\section{Tonelli--Shanks polynomials}

We now turn our attention to the case $p \equiv 1 \pmod 4$. In this section we define the Tonelli--Shanks polynomials; in later sections we see several equivalent descriptions.

Write
$$
p-1 = 2^s q, \qquad q \text{ odd},
$$
and set
$$
r = \frac{p-1}{2}.
$$
Recall that the set of nonzero squares is
$$
S = \mu_{\frac{p-1}{2}} = \mu_r \subset \F_p^\times.
$$
We fix throughout a primitive $2^{s-1}$-st root of unity $\zeta \in \F_p$. With this notation we make the following definitions and observations.

\begin{enumerate}
    \item For each $i = 0,\dots,2^{s-1}-1$ define
    $$
        S_i = \{a \in S : a^q = \zeta^i\}.
    $$
    Then the $S_i$ form a partition of $S$,
    $$
        S = \bigsqcup_{i=0}^{2^{s-1}-1} S_i,
    $$
    and each set has cardinality $|S_i| = q$.

    \medskip

    \item For $i = 0,\dots,2^{s-1}-1$ define
    $$
        F_i := F_{S_i} := \{f(x) \in \F_p[x] : f(a)^2 = a \text{ for all } a \in S_i\},
    $$
    the set of polynomials that compute square roots on $S_i$. As before, we write $F$ for the set of polynomials that compute square roots on the full set $S$.

    \medskip

    \item If we are given polynomials $f_k \in F_k$ for all $k = 0,\dots,2^{s-1}-1$, then
    \begin{equation} \label{eq: gluing Si}
        f(x)
        =
        \sum_{k = 0}^{2^{s-1}-1}
        f_k(x)
        \prod_{\substack{i = 0 \\ i \neq k}}^{2^{s-1}-1}
        \frac{x^q - \zeta^i}{\zeta^k - \zeta^i}
    \end{equation}
    computes square roots on all of $S$. 
\end{enumerate}

The next lemma records the natural symmetry between the sets $S_i$ and the spaces $F_i$.

\begin{lemma} \label{lem: bijection of S_i and F_i}
    Fix a prime $p$ and $\zeta \in \mu_{2^{s-1}}$. Pick $\xi \in \mu_{2^{s-1}}$ a primitive $2^{s-1}$-st root of unity, so that $\xi^q = \zeta$. Then for each $i = 0,\dots,2^{s-1}-1$ we have:
    \begin{enumerate}
        \item $S_i = \xi^i S_0$.
        \item There is a bijection
        $$
            F_0 \xrightarrow{\sim} F_i, \qquad
            f_0(x) \longmapsto f_i(x) := \xi^{i/2} f_0(\xi^{-i} x),
        $$
        where we fix once and for all a choice of $\xi^{1/2} \in \mu_{2^s}$ and interpret $\xi^{i/2}$ accordingly.
    \end{enumerate}
\end{lemma}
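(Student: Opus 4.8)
The plan is to prove the two statements in sequence, with (1) feeding directly into (2). For part (1), I would start from the defining condition $a \in S_i \iff a \in S$ and $a^q = \zeta^i$. Since $\xi \in \mu_{2^{s-1}}$ is primitive with $\xi^q = \zeta$, multiplication by $\xi^i$ sends $S$ to $S$ (note $\xi \in S$ because $\xi$ has order $2^{s-1} \mid r$), and for $a \in S_0$ we compute $(\xi^i a)^q = \xi^{iq} a^q = \zeta^i \cdot 1 = \zeta^i$, so $\xi^i S_0 \subseteq S_i$. The reverse inclusion follows by multiplying by $\xi^{-i}$, or simply by noting that both sets have cardinality $q$. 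Hence $S_i = \xi^i S_0$.

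For part (2), I would first check that the map $f_0 \mapsto f_i$, $f_i(x) := \xi^{i/2} f_0(\xi^{-i}x)$, actually lands in $F_i$. Take $a \in S_i$; by (1) write $a = \xi^i b$ with $b \in S_0$. Then $f_i(a)^2 = \xi^{i} f_0(\xi^{-i}a)^2 = \xi^i f_0(b)^2 = \xi^i b = a$, using $f_0 \in F_0$ and $f_0(b)^2 = b$. So $f_i \in F_i$. The map is visibly linear in the sense of being an invertible affine substitution composed with scaling, so its inverse is $f_i(x) \mapsto \xi^{-i/2} f_i(\xi^i x)$, which sends $F_i$ back to $F_0$ by the same computation with $i$ replaced appropriately; composing the two recovers the identity. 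This gives the claimed bijection.

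The only subtlety — and the one point that needs care rather than being purely routine — is the meaning of $\xi^{i/2}$ and the consistency of the square-root choice. Having fixed one square root $\xi^{1/2} \in \mu_{2^s}$, we set $\xi^{i/2} := (\xi^{1/2})^i$, so that $(\xi^{i/2})^2 = \xi^i$ holds identically; this is all that the verification above uses, so no branch-cut ambiguity actually arises once the single choice is pinned down. (One should note $\mu_{2^s} \subseteq \F_p^\times$ since $2^s \mid p-1$, so such a square root exists in $\F_p$.) I do not expect any real obstacle here; the lemma is essentially bookkeeping, and the main thing to get right is to state the square-root convention before using it and to confirm $\xi^i$ is itself a square so that multiplication by it preserves $S$.
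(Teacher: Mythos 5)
Your proposal is correct and follows essentially the same route as the paper: the paper's own proof merely observes that $x\mapsto x^q$ is an automorphism of $\mu_{2^{s-1}}$ (so the primitive $\xi$ with $\xi^q=\zeta$ exists) and declares that "the rest readily follows," while you have simply written out those routine verifications — the computation $(\xi^i a)^q=\zeta^i$, the check that $f_i(a)^2=a$ via $a=\xi^i b$, the explicit inverse map, and the square-root convention $\xi^{i/2}:=(\xi^{1/2})^i$. No gaps; your write-up is a faithful expansion of the paper's one-line argument.
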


\begin{proof}
    The map $$\phi_q: \mu_m \to \mu_m \qquad x\mapsto x^q$$ is an automorphism when $\gcd(q,m)=1$, so $\xi$ is well defined and primitive and the rest readily follows.
\end{proof}

Using the same ideas as in the Introduction, we can now describe minimal-degree polynomials on each $S_k$.

\begin{theorem}[Tonelli-Shanks] \label{th: minimal f_k that compute S_k}
    Let $p-1 = 2^s q$ with $q$ odd, and fix a primitive $2^{s-1}$-st root of unity $\zeta \in \mu_{2^{s-1}}$. Fix a choice of $\zeta^{1/2} \in \mu_{2^s}$. For each $k = 0,\dots,2^{s-1}-1$ the polynomial
    $$
        f_k(x) = \zeta^{-k/2} x^{\frac{q+1}{2}}
    $$
    lies in $F_k$ and has minimal possible degree among polynomials in $F_k$. 

    Moreover, if $q \ge 5$, then $f_k$ is the unique polynomial in $F_k$ of minimal degree, up to an overall sign.
\end{theorem}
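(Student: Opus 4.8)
I would prove the three assertions --- $f_k\in F_k$, minimality of $\deg f_k=\tfrac{q+1}{2}$, and uniqueness up to sign for $q\ge 5$ --- in that order, reducing the last two to the case $k=0$ and then invoking Theorem~\ref{th: f is monomial}. The membership claim is immediate: for $a\in S_k$ one has $a^q=\zeta^k$ by definition of $S_k$, so $f_k(a)^2=(\zeta^{-k/2})^2a^{q+1}=\zeta^{-k}a^q\cdot a=\zeta^{-k}\zeta^k a=a$, where $(\zeta^{-k/2})^2=\zeta^{-k}$ comes from the fixed choice $\zeta^{1/2}\in\mu_{2^s}$.

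Next I would reduce to $k=0$. By Lemma~\ref{lem: bijection of S_i and F_i}, the map $f_0(x)\mapsto\xi^{k/2}f_0(\xi^{-k}x)$ is a degree-preserving $\F_p$-linear bijection $F_0\xrightarrow{\sim}F_k$, so uniqueness up to sign in $F_0$ is equivalent to uniqueness up to sign in $F_k$; a one-line computation with $\xi^q=\zeta$ shows it carries $x^{(q+1)/2}$ to $f_k$. On $S_0=\{a\in S:a^q=1\}=\mu_q$, a polynomial $f$ lies in $F_0$ exactly when $f(x)^2-x$ is divisible in $\F_p[x]$ by $\prod_{a\in\mu_q}(x-a)=x^q-1$. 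A constant cannot lie in $F_0$ when $q\ge 3$ (its square cannot equal every element of $\mu_q$), so any $f\in F_0$ is nonconstant, and $x^q-1\mid f(x)^2-x$ forces $2\deg f=\deg(f^2-x)\ge q$, i.e.\ $\deg f\ge\tfrac{q+1}{2}$ since $q$ is odd; the monomial $x^{(q+1)/2}$ attains this because $x^{q+1}-x=x(x^q-1)$.

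For uniqueness with $q\ge 5$, I would take $f\in F_0$ with $\deg f=n:=\tfrac{q+1}{2}$, write $f(x)^2-x=(x^q-1)g(x)$, note $\deg g=1$ (as $\deg(f^2-x)=2n=q+1$), and expand: with $g(x)=c_1x+c_0$ this gives $f(x)^2=c_1x^{q+1}+c_0x^{q}+(1-c_1)x-c_0$, exactly of the form $b_0+b_1x+b_{2n-1}x^{2n-1}+b_{2n}x^{2n}$ with $2n=q+1$. Since $q\ge 5$, $n\ge 3>2$, and $2n=q+1\le\tfrac{p-1}{2}+1<p$ gives $n<\tfrac p2$, so Theorem~\ref{th: f is monomial} applies and $f$ must be a monomial $f=a_nx^n$; evaluating at $1\in\mu_q$ forces $a_n^2=1$, hence $f=\pm x^{(q+1)/2}$. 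Transporting back through the bijection gives the minimal elements of $F_k$ as $\pm\zeta^{-k/2}x^{(q+1)/2}$.

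The hard part will be nothing deep --- just making sure the hypotheses of Theorem~\ref{th: f is monomial} are actually met (that the minimal-degree assumption pins $\deg g=1$, giving the four-monomial shape, and that $2<n<p/2$) and keeping the half-integer exponents $\zeta^{\pm k/2},\xi^{k/2}$ consistent with the fixed square-root choices. I would also flag why $q\ge 5$ is needed for uniqueness: at $q=3$ one gets $n=2$, outside the range of Theorem~\ref{th: f is monomial}, and genuinely non-monomial minimal polynomials can appear; and at $q=1$ constants already lie in $F_0$, so the degree bound $\tfrac{q+1}{2}$ in the statement is to be read with $q\ge 3$.
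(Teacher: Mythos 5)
Your proposal is correct and follows essentially the same route as the paper: membership by the same direct computation, the degree bound via divisibility of $f(x)^2-x$ by the vanishing polynomial of $S_k$, and uniqueness by writing $f^2$ in the four-monomial shape $b_0+b_1x+b_{2n-1}x^{2n-1}+b_{2n}x^{2n}$ and invoking Theorem~\ref{th: f is monomial}, then pinning the leading constant by evaluation. The only (harmless) difference is that you first reduce to $k=0$ via the bijection of Lemma~\ref{lem: bijection of S_i and F_i}, whereas the paper runs the identical argument directly on $S_k$ with $x^q-\zeta^k$ in place of $x^q-1$.
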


\begin{proof}
    First, for any $a \in S_k$ we have $a^q = \zeta^k$, so
    $$
        f_k(a)^2 = \zeta^{-k} a^{q+1} = \zeta^{-k} \zeta^k a = a,
    $$
    hence $f_k \in F_k$. 

    Now let $f \in F_k$ be arbitrary, and set $d = \deg f$. The vanishing polynomial on $S_k$ is
    $$
        \prod_{a \in S_k} (x-a) = x^q - \zeta^k,
    $$
    and $f(x)^2 - x$ vanishes on $S_k$ and therefore
    $$
        x^q - \zeta^k \mid f(x)^2 - x.
    $$
    Thus we can write
    $$
        f(x)^2 - x = A(x) (x^q - \zeta^k)
    $$
    for some $A(x) \in \F_p[x]$. If $f$ is nonconstant then $\deg(f^2 - x) = 2d$, so
    $$
        2d = \deg(f^2 - x) \ge \deg(x^q - \zeta^k) = q,
    $$
    and hence
    $$
        d \ge \frac{q+1}{2}.
    $$
    Since $f_k$ has degree $\frac{q+1}{2}$, this shows that $f_k$ has minimal possible degree in $F_k$.

    For the uniqueness statement, assume $q \ge 5$ and let $f \in F_k$ have minimal degree $d = \frac{q+1}{2}$. Then $\deg f^2 = q+1$, and from
    $$
        f(x)^2 - x = A(x) (x^q - \zeta^k)
    $$
    we see that $\deg A = 1$. Write $A(x) = \alpha x + \beta$, so
    \begin{align*}
        f(x)^2
        &= A(x) (x^q - \zeta^k) + x \\
        &= \alpha x^{q+1} + \beta x^q - \alpha \zeta^k x - \beta \zeta^k + x \\
        &= \bigl(-\beta \zeta^k\bigr)
           + \bigl(1 - \alpha \zeta^k\bigr) x
           + \beta x^q
           + \alpha x^{q+1}.
    \end{align*}
    Set $n = \frac{q+1}{2}$, so that $2n = q+1$ and $2n-1 = q$. Then
    $$
        f(x)^2 = b_0 + b_1 x + b_{2n-1} x^{2n-1} + b_{2n} x^{2n}
    $$
    with
    $$
        b_0 = -\beta \zeta^k,\quad
        b_1 = 1 - \alpha \zeta^k,\quad
        b_{2n-1} = \beta,\quad
        b_{2n} = \alpha.
    $$
    Since $q \ge 5$, we have $n = \frac{q+1}{2} > 2$. Moreover, $2n = q+1 < p$, so the hypotheses of Theorem \ref{th: f is monomial} apply and we conclude that $f$ is a monomial:
    $$
        f(x) = c x^n
    $$
    for some $c \in \F_p^\times$.

    Finally, for any $a \in S_k$ we have
    $$
        a = f(a)^2 = c^2 a^{2n} = c^2 a^{q+1},
    $$
    so $c^2 a^q = 1$. Using $a^q = \zeta^k$ we obtain $c^2 \zeta^k = 1$, hence
    $$
        c^2 = \zeta^{-k}.
    $$
    Thus $c = \pm \zeta^{-k/2}$, and
    $$
        f(x) = \pm \zeta^{-k/2} x^{\frac{q+1}{2}}.
    $$
    This proves uniqueness up to sign when $q \ge 5$.
\end{proof}
    For $q = 1$ or $q = 3$ the same degree bound shows that $\deg f \ge \frac{q+1}{2}$ and $f_k$ still has minimal degree, but one can (and in general does) obtain additional minimal polynomials of the same degree; we do not need the uniqueness statement in these small cases in the rest of the paper.

Specializing Theorem \ref{th: minimal f_k that compute S_k} to $p \equiv 3 \pmod 4$ (so that $s = 1$ and $q = \frac{p-1}{2}$) recovers the well-known minimal polynomial in that case.

\begin{corollary} \label{cor: min deg f for 3 mod 4}
    Assume $p \equiv 3 \pmod 4$. Then
    $$
        f(x) = x^{\frac{p+1}{4}}
    $$
    computes square roots on $S$ and has minimal possible degree. If $p \ge 11$ (so $\frac{p+1}{4} > 2$), then $f$ is the unique minimal polynomial in $F$, up to sign.
\end{corollary}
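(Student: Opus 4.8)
The plan is to deduce this corollary directly from Theorem \ref{th: minimal f_k that compute S_k} by specializing to the case $p\equiv 3\pmod 4$, where the factorization $p-1=2^sq$ has $s=1$ and $q=\frac{p-1}{2}$. First I would observe that when $s=1$ we have $2^{s-1}=1$, so the index set $\{0,\dots,2^{s-1}-1\}$ is just $\{0\}$: the partition $S=\bigsqcup_i S_i$ is trivial, $S_0=S$, and correspondingly $F_0=F$. The only $2^{s-1}=1$-st root of unity is $\zeta=1$, and we may take $\zeta^{1/2}=1$. Plugging $k=0$ into the formula $f_k(x)=\zeta^{-k/2}x^{\frac{q+1}{2}}$ then gives $f(x)=x^{\frac{q+1}{2}}$, and a one-line computation with $q=\frac{p-1}{2}$ shows $\frac{q+1}{2}=\frac{p+1}{4}$. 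Theorem \ref{th: minimal f_k that compute S_k} immediately yields that this $f$ lies in $F$ and has minimal possible degree among polynomials in $F$.

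For the uniqueness clause I would note that the hypothesis $q\ge 5$ in Theorem \ref{th: minimal f_k that compute S_k} translates, via $q=\frac{p-1}{2}$, to $p\ge 11$, which is exactly the stated condition (and is equivalent to $\frac{p+1}{4}>2$ for primes $p\equiv 3\pmod 4$, the smallest such prime exceeding $7$ being $11$). Under this hypothesis the theorem says $f$ is the unique element of $F_0=F$ of minimal degree, up to an overall sign, which is precisely the assertion of the corollary.

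There is essentially no serious obstacle here; the content is entirely in Theorem \ref{th: minimal f_k that compute S_k} (and, through it, in Theorem \ref{th: f is monomial}). The only things to be careful about are bookkeeping: checking that the degenerate choices $s=1$, $\zeta=1$, $\zeta^{1/2}=1$ are legitimate instances of the hypotheses of Theorem \ref{th: minimal f_k that compute S_k}, and verifying the elementary arithmetic identities $\frac{q+1}{2}=\frac{p+1}{4}$ and $q\ge 5\iff p\ge 11$ in this regime. I would also remark, as the paper already does in the surrounding text, that for the excluded small primes $p\in\{3,7\}$ the degree bound still holds and $x^{\frac{p+1}{4}}$ is still minimal, but uniqueness may fail; this does not require proof for the corollary as stated.
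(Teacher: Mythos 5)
Your proposal is correct and matches the paper's approach exactly: the paper derives this corollary by specializing Theorem \ref{th: minimal f_k that compute S_k} to $s=1$, $q=\frac{p-1}{2}$, $k=0$ (so $S_0=S$, $F_0=F$, $\frac{q+1}{2}=\frac{p+1}{4}$, and $q\ge5\iff p\ge11$), which is precisely your argument.
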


\begin{definition} \label{def: TS pol}
    A \emph{Tonelli--Shanks polynomial} is any polynomial obtained by gluing together the polynomials $f_k$ from Theorem \ref{th: minimal f_k that compute S_k} via the formula \eqref{eq: gluing Si}.
\end{definition}

\begin{corollary} \label{cor: deg f_TS}
    Let $p-1 = 2^s q$ with $q$ odd, and set $r = \frac{p-1}{2}$. Then:
    \begin{enumerate}
        \item There are $2^{2^{s-1}}$ Tonelli--Shanks polynomials that compute square roots on $S$.
        \item If $f_{TS}(x)$ is such a polynomial, then
        $$
            \deg(f_{TS})
            \le \frac{(2^s - 1)p + 1}{2^{s+1}}
            = \frac{q(2^s - 1) + 1}{2}
            = r - \frac{q-1}{2}.
        $$
    \end{enumerate}
\end{corollary}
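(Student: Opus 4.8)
The plan is to establish the two parts of Corollary \ref{cor: deg f_TS} directly from the gluing formula \eqref{eq: gluing Si} and the explicit shape of the $f_k$ given in Theorem \ref{th: minimal f_k that compute S_k}.

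For part (1), I would count the choices involved in building a TS-polynomial. By Definition \ref{def: TS pol} a TS-polynomial is obtained by gluing the $f_k$, and Theorem \ref{th: minimal f_k that compute S_k} (together with the remark that the sign is genuinely free) tells us that for each $k = 0,\dots,2^{s-1}-1$ the minimal polynomial in $F_k$ is $\pm\,\zeta^{-k/2} x^{(q+1)/2}$, i.e. there are exactly two admissible choices of $f_k$. I would note that the fixed choice of $\zeta^{1/2}$ only pins down an overall normalization and does not reduce the count: distinct sign vectors $(\epsilon_0,\dots,\epsilon_{2^{s-1}-1}) \in \{\pm1\}^{2^{s-1}}$ yield distinct glued polynomials, since evaluating the glued $f$ via the Lagrange idempotents recovers $f(a) = f_k(a)$ for $a \in S_k$, and a sign flip of $f_k$ changes $f$ on all of $S_k \neq \emptyset$. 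Hence the number of TS-polynomials is $2^{2^{s-1}}$.

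For part (2), I would bound the degree of the glued polynomial termwise. In \eqref{eq: gluing Si}, the $k$-th summand is $f_k(x)$ — a monomial of degree $\frac{q+1}{2}$ — times the product $\prod_{i \neq k} \frac{x^q - \zeta^i}{\zeta^k - \zeta^i}$, which is a polynomial of degree $q\,(2^{s-1}-1)$. So each summand has degree at most $\frac{q+1}{2} + q(2^{s-1}-1)$, and therefore $\deg f_{TS} \le \frac{q+1}{2} + q(2^{s-1}-1) = \frac{q(2^s-1)+1}{2}$. The claimed chain of equalities is then just arithmetic: $\frac{q(2^s-1)+1}{2} = \frac{(2^sq - q) + 1}{2} = \frac{(p-1) - q + 1}{2} = \frac{p-q}{2} = r - \frac{q-1}{2}$, and also equals $\frac{(2^s-1)p+1}{2^{s+1}}$ after clearing denominators (multiply through and use $p - 1 = 2^sq$). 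I would present this as a one-line computation rather than belaboring it.

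The one point that deserves a sentence of care — and the only real obstacle — is that \eqref{eq: gluing Si} gives a sum of polynomials of the same top degree $\frac{q(2^s-1)+1}{2}$, so a priori cancellation could make the actual degree strictly smaller; this is exactly why the statement is an inequality rather than an equality, and indeed Theorem \ref{main th: TS polys} records that the realized degree is $r - \frac{q-1}{2} - nq$ for varying $n \ge 0$. For the present corollary nothing more is needed: the upper bound is immediate from the termwise estimate, and the refined analysis of which degrees actually occur (and the fact that the top degree \emph{is} attained for most sign choices) is deferred to Section 5. So I would close by simply remarking that the bound is not always sharp, pointing forward to the finer degree analysis.
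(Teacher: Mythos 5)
Your proposal is correct and follows essentially the same route as the paper: counting the two sign choices $\pm f_k$ on each of the $2^{s-1}$ sets $S_k$ for part (1), and bounding the degree of each summand in the gluing formula \eqref{eq: gluing Si} by $\frac{q+1}{2} + q(2^{s-1}-1)$ for part (2), with the same closing remark that cancellation in the leading coefficient is why the statement is only an inequality. Your extra sentence justifying that distinct sign vectors give distinct glued polynomials is a small additional care the paper leaves implicit, but it does not change the argument.
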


\begin{proof}
    For each $S_i$ there are two choices of polynomial in $F_i$ of minimal degree, namely $\pm f_i$ from Theorem \ref{th: minimal f_k that compute S_k}. There are $2^{s-1}$ different sets $S_i$, so the total number of ways to choose one sign on each $S_i$ is
    $2^{2^{s-1}}$. This proves (1).

    For (2), fix the minimal-degree representatives $f_k(x) = \zeta^{-k/2} x^{\frac{q+1}{2}}$. In the gluing formula \eqref{eq: gluing Si}, the factor
    $$
        \prod_{\substack{i = 0 \\ i \neq k}}^{2^{s-1}-1} (x^q - \zeta^i)
    $$
    has degree $(2^{s-1}-1)q$, so each summand has degree
    $$
        (2^{s-1}-1)q + \frac{q+1}{2}.
    $$
    Thus
    $$
        \deg(f_{TS}) \le (2^{s-1}-1)q + \frac{q+1}{2}
        = \frac{(2^s - 1)p + 1}{2^{s+1}}
        = r - \frac{q-1}{2}.
    $$
    Equality holds whenever there is no cancellation in the leading coefficient.
\end{proof}

\begin{remark}
    If in each $S_i$ we choose the ``positive'' polynomial $f_i(x) = \zeta^{-i/2} x^{\frac{q+1}{2}}$, then there is no cancellation in the leading coefficient of $f_{TS}$. Indeed, the leading coefficient is a nonzero multiple of
    $$
        \sum_{k=0}^{2^{s-1}-1} (\zeta^{-1/2})^k
        = \frac{1 - (\zeta^{-1/2})^{2^{s-1}}}{1 - \zeta^{-1/2}}
        = \frac{1 - \zeta^{-2^{s-2}}}{1 - \zeta^{-1/2}}
        = \frac{2}{1 - \zeta^{-1/2}} \ne 0
    $$
    in $\F_p$, since $\zeta^{2^{s-2}} = -1$. Thus at least one Tonelli--Shanks polynomial has degree
    $$
        \deg(f_{TS}) = \frac{(2^s - 1)p + 1}{2^{s+1}},
    $$
    and in fact we will see later that the majority of Tonelli--Shanks polynomials attain this degree.
\end{remark}

In particular, we obtain the upper bound
$$
    \min\{\deg f : f \in F\} \le r - \frac{q-1}{2}.
$$
For instance, when $p \equiv 5 \pmod 8$ one has $s = 2$ and $q = \frac{p-1}{4}$, so
$$
    \deg f_{TS} = \frac{3p+1}{8},
$$
recovering the bound of \cite[Th.~1.3]{MR4774704}.

\section{Lagrange Interpolation and Fourier Transform}

In this section, we describe the coefficients of polynomials that compute square roots, using Lagrange interpolation and Fourier inversion. We start with the general case of subgroups of $S$ and then restrict our attention to $S_0$ and $F_0$, which is sufficient by Lemma \ref{lem: bijection of S_i and F_i}.

Let $H \leq S$ be a subgroup. Then $H$ is cyclic of the form $H = \mu_d$ for some divisor $d \mid r = \frac{p-1}{2}$. Any polynomial $f$ of degree $\leq d-1$ that computes square roots on $H$ (we write $f \in F_H$) can be constructed by Lagrange interpolation, by interpolating the tuples
$$
\{(a, \pm \sqrt{a})\}_{a \in H}.
$$
This yields the expression
$$
f(x) = \sum_{s \in H} (\pm \sqrt{s}) \prod_{\substack{t \in H \\ t \neq s}} \frac{x - t}{s - t}.
$$
Equivalently, if we enumerate $H = \{\zeta_d^i : 0 \le i \le d-1\}$ for a primitive $d$-th root of unity $\zeta_d \in \F_p^\times$, then $f$ is uniquely determined by a choice of signs $\epsilon_i \in \{\pm 1\}$ via
$$
f(\zeta_d^{i}) = \epsilon_i \zeta_d^{i/2}, \qquad i = 0, 1, \ldots, d - 1,
$$
where we implicitly fix a choice of $\zeta_d^{1/2}$.

We can make the dependence on the signs explicit by a discrete Fourier transform on $H$.

\begin{theorem}[Fourier inversion on $H \leq S$] \label{thm: fourier-H}
    Let $p$ be an odd prime, $S = \mu_{\frac{p - 1}{2}} = \mu_r$ be the group of all squares in $\F_p^\times$, and let $H = \mu_d$ be a subgroup of $S$. Fix a primitive $d$-th root $\zeta_d \in \F_p^\times$ and a choice of $\zeta_d^{1/2}$. Let $f \in F_H$ be determined by a sign pattern $\{\epsilon_n\}_{n = 0}^{d-1} \subset \{\pm 1\}$ via
    $$
    f(\zeta_d^n) = \epsilon_n \zeta_d^{n/2} \qquad (n = 0, 1, \ldots, d - 1).
    $$
    Write
    $$
    f(x) = \sum_{k = 0}^{d - 1} c_k x^k.
    $$
    Then, for $0 \leq k \leq d - 1$,
    $$
    c_k = \frac{1}{d} \sum_{n = 0}^{d - 1} \epsilon_n \zeta_d^{n(\frac{1}{2} - k)}.
    $$
\end{theorem}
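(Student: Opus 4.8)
The plan is to read the claim as an instance of Fourier inversion (inverse DFT) on the cyclic group $\mu_d\subset\F_p^\times$. Since we are told $f(x)=\sum_{k=0}^{d-1}c_kx^k$ has degree $\le d-1$, it is completely determined by its values at the $d$ roots of unity $\zeta_d^0,\dots,\zeta_d^{d-1}$, and the evaluation map $(c_0,\dots,c_{d-1})\mapsto\bigl(f(\zeta_d^n)\bigr)_{0\le n\le d-1}$ is the Vandermonde/DFT matrix with $(n,k)$-entry $\zeta_d^{nk}$. The first step is to record the orthogonality relation
$$
\sum_{n=0}^{d-1}\zeta_d^{nm}=
\begin{cases}
d, & d\mid m,\\
0, & d\nmid m,
\end{cases}
$$
valid for every integer $m$: it is the finite geometric series, using $\zeta_d^d=1$ and the fact that $\zeta_d^m-1$ is a unit in $\F_p$ whenever $d\nmid m$ (because $\zeta_d$ has order $d$). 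Note also that $d$ itself is a unit in $\F_p$, since $d\mid r=\frac{p-1}{2}<p$, so the overall factor $\frac1d$ makes sense.

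Next I would substitute the expansion $f(\zeta_d^n)=\sum_{j=0}^{d-1}c_j\zeta_d^{nj}$ into the right-hand side of the claimed identity. Using the defining relation $\epsilon_n=f(\zeta_d^n)\,\zeta_d^{-n/2}$ (with $\zeta_d^{1/2}$ fixed once and for all and $\zeta_d^{n/2}:=(\zeta_d^{1/2})^n$, so that the factor $\zeta_d^{n/2}$ appearing in $\zeta_d^{n(\frac12-k)}=\zeta_d^{n/2}\zeta_d^{-nk}$ cancels the $\zeta_d^{-n/2}$ built into $\epsilon_n$), one obtains
$$
\frac1d\sum_{n=0}^{d-1}\epsilon_n\zeta_d^{n(\frac12-k)}
=\frac1d\sum_{n=0}^{d-1}f(\zeta_d^n)\,\zeta_d^{-nk}
=\frac1d\sum_{j=0}^{d-1}c_j\sum_{n=0}^{d-1}\zeta_d^{n(j-k)}.
$$
Since $0\le j,k\le d-1$ forces $|j-k|<d$, the inner sum equals $d$ when $j=k$ and $0$ otherwise, so the right-hand side collapses to $c_k$, which is exactly the assertion.

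I do not expect a genuine obstacle here: the content is the standard fact that the DFT matrix over $\mu_d$ is invertible with inverse $\tfrac1d$ times its ``conjugate.'' The only two points requiring a moment's care are (i) confirming that $d$ is invertible in $\F_p$, which is immediate from $d\le r<p$, and (ii) keeping the bookkeeping of the fixed formal square root $\zeta_d^{1/2}$ consistent so that the half-integer powers $\zeta_d^{\pm n/2}$ truly cancel; since the sum runs over the literal integers $n=0,\dots,d-1$ and we never reduce an exponent modulo $d$, this is purely a matter of interpreting the notation as in the statement and involves no sign ambiguity. Both issues are already handled by the conventions built into the hypotheses, so the proof is essentially the one-line computation above together with the orthogonality lemma.
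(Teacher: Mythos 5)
Your proof is correct and follows essentially the same route as the paper: expand $f(\zeta_d^n)=\sum_j c_j\zeta_d^{nj}$, apply the orthogonality relation $\frac1d\sum_n\zeta_d^{n(j-k)}=\mathbf 1_{\{j\equiv k\}}$, and invert. The extra remarks about $d$ being a unit in $\F_p$ and the bookkeeping of $\zeta_d^{1/2}$ are fine but add nothing beyond the paper's argument.
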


\begin{proof}
Write $\zeta = \zeta_d$. Evaluating $f(x) = \sum_{k=0}^{d-1} c_k x^k$ at $x = \zeta^n$ gives
$$
f(\zeta^n) = \sum_{k=0}^{d-1} c_k \zeta^{nk}.
$$
By hypothesis $f(\zeta^n) = \epsilon_n \zeta^{n/2}$. Using the orthogonality relations
$$
\frac{1}{d} \sum_{n=0}^{d-1} \zeta^{n(k-\ell)} =
\begin{cases}
1 & \text{if } k \equiv \ell \pmod d, \\
0 & \text{otherwise},
\end{cases}
$$
we invert to obtain
$$
c_k = \frac{1}{d} \sum_{n=0}^{d-1} f(\zeta^n) \zeta^{-nk}
     = \frac{1}{d} \sum_{n=0}^{d-1} \epsilon_n \zeta^{n(\frac{1}{2}-k)}.
$$
\end{proof}

\subsection{Lagrange interpolation and coefficients of polynomials computing \texorpdfstring{$S_0$}{S0}}

We now specialize Theorem \ref{thm: fourier-H} to the case $H = S_0 \leq S$ and describe the coefficients of $f_0 \in F_0$. By Lemma \ref{lem: bijection of S_i and F_i}, the same analysis applies to any $S_i$.

\begin{definition}
Let $m := \frac{q+1}{2} \in \Z$. For $a \in S_0$ define
$$
a^{1/2} := a^{m} \in S_0.
$$
\end{definition}

If $\zeta_q$ is a primitive $q$-th root of unity, this convention fixes the meaning of $\zeta_q^{n/2}$ for all $n$.

As before, any $f_0 \in F_0$ is uniquely determined by a choice of signs $\epsilon_i \in \{\pm 1\}$ for $i = 0, 1, \ldots, q - 1$ via
$$
f_0(\zeta_q^i) = \epsilon_i \zeta_q^{i/2}.
$$

\begin{example}
Let $f_0(x) = x^{\frac{q+1}{2}} \in F_0$ be the minimal polynomial in $F_0$, as in Theorem \ref{th: minimal f_k that compute S_k}. Then the corresponding sign pattern is
$$
(\epsilon_0, \ldots, \epsilon_{q-1}) = (1, \ldots, 1).
$$
\end{example}

As a consequence of Theorem \ref{thm: fourier-H}, we obtain the following convenient form for the coefficients of $f_0$.

\begin{corollary}[Fourier inversion on $F_0$]\label{cor:fourier-F0}
    Let $p$ be an odd prime with $p-1 = 2^s q$ and $q$ odd. Fix a primitive $q$-th root
    $\zeta_q \in \F_p^\times$ and set $m := \frac{q+1}{2}$. 
    Let $f_0 \in F_0$ be determined by a sign pattern 
    $\{\epsilon_n\}_{n \in \Z/q\Z} \subset \{\pm1\}^q$ via
    $$
    f_0(\zeta_q^n) = \epsilon_n (\zeta_q^n)^{1/2} = \epsilon_n \zeta_q^{nm} \qquad (n = 0, 1, \ldots, q-1).
    $$
    Then, for any $d \in \Z$,
    $$
    \big[ x^{\frac{q+1}{2}+d} \big] f_0 = \frac{1}{q} \sum_{n=0}^{q-1} \epsilon_n \zeta_q^{-nd}.
    $$
    (Here $\big[ x^r \big] f_0$ denotes the coefficient of $x^r$ in $f_0(x)$.)
\end{corollary}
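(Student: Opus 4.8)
The plan is to deduce the corollary directly from Theorem~\ref{thm: fourier-H} by specializing to the subgroup $H = S_0$. First I would note that $S_0 = \{a \in S : a^q = 1\}$ is precisely the subgroup $\mu_q \le S$ (using $q \mid r$), so $F_0 = F_{S_0}$ is exactly the case $d = q$ of $F_H$, with $\zeta_q$ a primitive $q$-th root of unity. The convention $a^{1/2} := a^m$ with $m = \frac{q+1}{2}$ is precisely the choice of $\zeta_q^{1/2}$ for which the hypothesis $f_0(\zeta_q^n) = \epsilon_n \zeta_q^{n/2}$ of Theorem~\ref{thm: fourier-H} reads $f_0(\zeta_q^n) = \epsilon_n \zeta_q^{nm}$; indeed $\zeta_q^{n/2} = (\zeta_q^n)^{1/2} = \zeta_q^{nm}$, and since $\zeta_q$ has order $q$ this depends only on $nm \bmod q$.

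With these identifications in hand, Theorem~\ref{thm: fourier-H} gives, for $f_0 = \sum_{k=0}^{q-1} c_k x^k$,
$$
c_k = \frac{1}{q} \sum_{n=0}^{q-1} \epsilon_n \zeta_q^{n(\frac{1}{2} - k)} = \frac{1}{q} \sum_{n=0}^{q-1} \epsilon_n \zeta_q^{nm - nk}.
$$
The right-hand side depends only on $k \bmod q$, so we may regard this as a formula for $[x^k] f_0$ for every integer $k$, reading $f_0$ as its image in $\F_p[x]/(x^q - 1)$. Substituting $k = \frac{q+1}{2} + d = m + d$ collapses the exponent $nm - nk$ to $nm - nm - nd = -nd$, which yields
$$
\big[\, x^{\frac{q+1}{2}+d} \,\big] f_0 = \frac{1}{q} \sum_{n=0}^{q-1} \epsilon_n \zeta_q^{-nd},
$$
the desired identity.

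Since this is essentially a single substitution, there is no genuine obstacle; the only point that demands a line of care is the bookkeeping modulo $q$. The exponent $\frac{q+1}{2} + d$ need not lie in $\{0, \dots, q-1\}$, so the notation $[x^j] f_0$ must be read cyclically, as the coefficient of $x^{j \bmod q}$ in the degree $\le q-1$ representative (equivalently, in the image of $f_0$ in $\F_p[x]/(x^q - 1)$), and one should check that both the square-root convention and the Fourier formula are compatible with this reduction. As already observed, this is automatic because every exponent of $\zeta_q$ matters only modulo $q$.
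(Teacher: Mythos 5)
Your proposal is correct and matches the paper's route exactly: the paper states this corollary as an immediate specialization of Theorem~\ref{thm: fourier-H} to $H = S_0 = \mu_q$ with the convention $\zeta_q^{1/2} = \zeta_q^m$, followed by the substitution $k = \frac{q+1}{2} + d$, which is precisely what you do. Your extra remark about reading the coefficient cyclically modulo $q$ (i.e.\ in $\F_p[x]/(x^q-1)$) is a sensible clarification of the paper's implicit convention rather than a deviation.
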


It is natural to explore various structured choices of the sign pattern and how they affect the degree of $f_0$.

\begin{theorem}\label{th: f_1 for p = 5 mod 8}
Write $p-1 = 2^s q$ as always. Fix a primitive $q$-th root $\zeta \in \F_p^\times$.
Let $(\epsilon_n)_{n \in \Z/q\Z}$ be a sign pattern with period $N \mid q$, that is,
$$
\epsilon_{n+N} = \epsilon_n \quad \text{for all } n,
$$
and set $Q := q/N$ (so $Q$ is odd).

Let $f_0 \in F_0$ be the polynomial corresponding to the values $\epsilon_n$. By Corollary \ref{cor:fourier-F0}, for each $k = \frac{q+1}{2} + d$ we have
$$
c_k = \big[x^{\frac{q + 1}{2} + d}\big] f_0 = \frac{1}{q} \sum_{n=0}^{q-1} \epsilon_n \zeta^{-dn}
\qquad (0 \le k \le q-1),
$$
where $c_k$ denotes the coefficient of $x^k$ in $f_0(x)$. Then:
\begin{enumerate}
\item $c_k = 0$ if $d \not\equiv 0 \pmod{Q}$. In particular, $f_0$ has at most $N$ nonzero monomials.
\item Consequently,
$$
\deg f_0 \le \frac{q+1}{2} + \frac{q}{2} - \frac{q}{2N}
           = q - \frac{Q-1}{2}.
$$
\end{enumerate}
\end{theorem}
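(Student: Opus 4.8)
The plan is to feed the periodicity of $(\epsilon_n)$ straight into the coefficient formula supplied by Corollary~\ref{cor:fourier-F0}. Every index $n\in\{0,\dots,q-1\}$ can be written uniquely as $n=jN+i$ with $0\le i\le N-1$ and $0\le j\le Q-1$, and periodicity gives $\epsilon_n=\epsilon_i$. Substituting into $c_k=\tfrac1q\sum_{n=0}^{q-1}\epsilon_n\zeta^{-dn}$ and splitting the exponential yields the factorization
\[
c_k=\frac1q\left(\sum_{i=0}^{N-1}\epsilon_i\zeta^{-di}\right)\left(\sum_{j=0}^{Q-1}\bigl(\zeta^{-dN}\bigr)^{j}\right).
\]
Since $\zeta$ is a primitive $q$-th root of unity and $N\mid q$, the element $\zeta^{N}$ has order $q/N=Q$, so $\zeta^{-dN}$ is a $Q$-th root of unity which equals $1$ exactly when $Q\mid d$. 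When $Q\nmid d$ the inner geometric sum telescopes to $\bigl((\zeta^{-dN})^{Q}-1\bigr)/(\zeta^{-dN}-1)=(\zeta^{-dq}-1)/(\zeta^{-dN}-1)=0$, because $\zeta^{q}=1$ and the denominator is nonzero. This is exactly statement (1); moreover, as $k$ ranges over $\{0,\dots,q-1\}$ the admissible values (those with $Q\mid d$) form a single residue class modulo $Q$, of which there are $q/Q=N$, so $f_0$ has at most $N$ nonzero monomials.

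For (2) I would translate the constraint $Q\mid d$ into a congruence on the exponent $k=\tfrac{q+1}{2}+d$. Reducing modulo $Q$ and using $q\equiv 0\pmod Q$ gives $2k\equiv q+1\equiv 1\pmod Q$, hence $k\equiv\tfrac{Q+1}{2}\pmod Q$ (legitimate since $Q$ is odd, so $2$ is invertible mod $Q$). Because $f_0$ is the degree-$\le q-1$ interpolation polynomial on the $q$ points of $S_0$, its exponents lie in $\{0,\dots,q-1\}$, and the largest one congruent to $\tfrac{Q+1}{2}$ modulo $Q$ is $\tfrac{Q+1}{2}+(N-1)Q=q-\tfrac{Q-1}{2}$. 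A one-line rewrite then identifies $q-\tfrac{Q-1}{2}$ with $\tfrac{q+1}{2}+\tfrac q2-\tfrac{q}{2N}$, giving the stated bound. (When $Q=1$ there is no constraint on $d$ at all and the asserted bound is weaker than the trivial $\deg f_0\le q-1$, so it holds vacuously in that case.)

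I do not expect a genuine obstacle here: the heart of the argument is the single geometric-sum identity above, and the rest is elementary counting. The only points that require a little care are (a) consistently identifying the integer $d$ with its residue modulo $q$ — the coefficient formula only sees $d\bmod q$, but "$Q\mid d$" is unambiguous because $Q\mid q$, so the dichotomy is respected; and (b) isolating the degenerate case $Q=1$ so that the clean formula $q-\tfrac{Q-1}{2}$ can be stated without exception. Both are routine.
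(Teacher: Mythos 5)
Your proposal is correct and follows essentially the same route as the paper: split the index as $n=jN+i$, factor the coefficient into a block sum times a geometric series in $\zeta^{-dN}$ (which vanishes unless $Q\mid d$), and then locate the largest admissible exponent. The only cosmetic difference is that for part (2) you phrase the constraint as a congruence $k\equiv\frac{Q+1}{2}\pmod Q$ on the exponent, while the paper parametrizes $d=tQ$ and bounds $t\le\frac{N-1}{2}$; both give $q-\frac{Q-1}{2}$.
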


\begin{proof}
Write $n = N m + r$ with $0 \le r < N$ and $0 \le m < Q$. Using the $N$-periodicity of $\epsilon$, we have
$$
\begin{aligned}
c_k
&= \frac{1}{q} \sum_{n=0}^{q-1} \epsilon_n \zeta^{-dn}
 = \frac{1}{q} \sum_{r=0}^{N-1} \sum_{m=0}^{Q-1} \epsilon_{N m + r} \zeta^{-d(N m + r)} \\
&= \frac{1}{q} \sum_{r=0}^{N-1} \epsilon_r \zeta^{-dr} \sum_{m=0}^{Q-1} (\zeta^{-dN})^{m}.
\end{aligned}
$$
The inner sum is a geometric series. It vanishes unless $\zeta^{-dN} = 1$, which is equivalent to $q \mid dN$, hence to $Q \mid d$. This proves (1).

If $Q \mid d$ we can write $d = tQ$ for some integer $t$. For the corresponding exponent
$$
k = \frac{q+1}{2} + d = \frac{q+1}{2} + tQ
$$
to lie in the range $0 \le k \le q-1$ we need
$$
\frac{q+1}{2} + tQ \le q-1
\quad\Longleftrightarrow\quad
tQ \le \frac{q-3}{2}.
$$
Since $q = N Q$, this gives
$$
t \le \frac{q-3}{2Q} = \frac{N}{2} - \frac{3}{2Q} < \frac{N}{2}.
$$
Because $t$ is an integer and $N$ is odd (as a divisor of the odd number $q$), we have $t \le \frac{N-1}{2}$.

Thus the largest possible exponent $k$ with $c_k \ne 0$ satisfies
$$
k_{\max}
= \frac{q+1}{2} + t_{\max} Q
\le \frac{q+1}{2} + \frac{N-1}{2} Q
= \frac{q+1}{2} + \frac{q}{2} - \frac{Q}{2}
= q - \frac{Q-1}{2},
$$
which gives (2) and therefore the stated degree bound.
\end{proof}

\begin{example} \label{cor: f_1 for p=5 mod 8 and 3 | q}
Assume $p \equiv 5 \pmod 8$ and $p-1 = 2^s q$ with $3 \mid q$. Then
$$
f_0(x) = \frac{x^{\frac{q + 1}{2}}}{3} \left( 2 x^{\frac{q}{3}} - 1 + 2 x^{-\frac{q}{3}} \right)
$$
computes square roots on $S_0$ (viewed modulo $x^q - 1$).
\end{example}

\begin{proof}
Take $N = 3$ and the $3$-periodic sign pattern $(\epsilon_0,\epsilon_1,\epsilon_2) = (1,-1,-1)$. Applying Theorem \ref{th: f_1 for p = 5 mod 8} and rewriting exponents modulo $q$ gives the displayed formula.
\end{proof}

\section{Signed half sums}

This is the main section of the paper. We study signed half sums in a general setting and extend Theorem \ref{th: f_1 for p = 5 mod 8} from the subgroup setting to the full family $\mathcal F = \mathcal F_S$.

\subsection{Signed half sums}

Recall $r = \frac{p-1}{2}$ and $S = \mu_r \subset \F_p^\times$. Fix a primitive $2r$-th root of unity $\zeta \in \F_p^\times$; then $\zeta^{2}$ generates $\mu_r$, and we can write
$$
S = \{\zeta^{2n} : 0 \le n < r\}.
$$
Every polynomial $f \in F$ is determined by a sign vector $\epsilon = (\epsilon_0,\dots,\epsilon_{r-1})$, where
$$
f(\zeta^{2n}) = \epsilon_n \zeta^{n} \qquad (0 \le n < r),
$$
and, by Fourier inversion (Theorem \ref{thm: fourier-H} with $H = S$), its coefficients have the form
\begin{equation}\label{eq: fourier inv coef}
c_k = \frac{1}{r} \sum_{n=0}^{r-1} \epsilon_n \zeta^{n(1-2k)}.
\end{equation}
Thus each coefficient is a signed average of roots of unity. This motivates the following abstract setup.

For each positive integer $d$, set
$$
E_d := \{\pm1\}^d.
$$

\begin{definition}
Let $p$ be a prime and let $d$ be a divisor of $r = \frac{p-1}{2}$. Let $\zeta \in \F_p^\times$ be a primitive $2d$-th root of unity, and let $k$ be an odd integer. For $\epsilon = (\epsilon_0,\dots,\epsilon_{d-1}) \in E_d$, the \emph{$(d,k)$-signed half sum} of $\epsilon$ (with respect to $\zeta$) is the $\{ \pm 1 \}$-weighted sum over half the powers of $\zeta$:
$$
H_{d,k}^{(\zeta)}(\epsilon) := \sum_{n=0}^{d-1} \epsilon_n \zeta^{nk}.
$$
\end{definition}

In this way, using \eqref{eq: fourier inv coef} all coefficients of polynomials $f \in \mathcal F$ (and similarly for the subfamilies $\mathcal F_H$) can be seen as signed half sums, up to a constant factor and a choice of $(d,k,\zeta)$.

\begin{definition}
Let $p$ be a prime and $d \mid r$. For an odd integer $k$, define
$$
V_{d,k}^{(\zeta)} := \bigl\{ \epsilon \in E_d : H_{d,k}^{(\zeta)}(\epsilon) = 0 \bigr\}.
$$
For a set $K$ of odd integers, define
$$
V_{d,K}^{(\zeta)} := \bigcap_{k \in K} V_{d,k}^{(\zeta)}
= \bigl\{\epsilon \in E_d : H_{d,k}^{(\zeta)}(\epsilon) = 0 \text{ for all } k \in K\bigr\}.
$$
\end{definition}

When the choice of primitive $2d$-th root $\zeta$ is clear from context, we simply write $H_{d,k}$, $V_{d,k}$ and $V_{d,K}$.

The next lemma shows that the choice of primitive $2d$-th root is essentially irrelevant for counting solutions.

\begin{lemma}\label{lem:root choice independence}
Let $p$ be a prime, $d \mid r$, and let $k$ be an odd integer. Let $\zeta$ and $\eta$ be any two primitive $2d$-th roots of unity in $\F_p^\times$. Then there exists a permutation
$$
T : E_d \to E_d
$$
such that for all $\epsilon \in E_d$,
$$
H_{d,k}^{(\zeta)}(\epsilon) = H_{d,k}^{(\eta)}\bigl(T(\epsilon)\bigr).
$$
\end{lemma}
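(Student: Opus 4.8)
The plan is to use the fact that any two generators of the cyclic group $\mu_{2d}\subseteq\F_p^\times$ (a group of order $2d$, since $d\mid r$ forces $2d\mid p-1$) differ by raising to a power coprime to $2d$. Write $\eta=\zeta^{u}$ for the unique $u\in\units{\Z/2d\Z}$ with this property; since $\gcd(u,2d)=1$, $u$ is in particular a unit modulo $d$. The permutation $T$ will be assembled from two pieces: a relabelling of the coordinate indices $\{0,\dots,d-1\}$ coming from multiplication by $u^{-1}$, and a correction by $\pm1$'s that absorbs the wrap-around relation $\zeta^{d}=-1$.

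Concretely, let $\pi\colon\{0,\dots,d-1\}\to\{0,\dots,d-1\}$ send $n$ to the residue of $u^{-1}n$ modulo $d$; this is a bijection, as $u^{-1}$ is a unit mod $d$. Then $u\pi(n)\equiv n\pmod d$, so $k\bigl(n-u\pi(n)\bigr)$ is a multiple of $d$, hence $\tau_n:=\zeta^{\,k(n-u\pi(n))}$ is a power of $\zeta^{d}=-1$ and lies in $\{\pm1\}$. The point of this choice is the identity
$$
\tau_n\,\eta^{\pi(n)k}=\zeta^{\,k(n-u\pi(n))}\,\zeta^{\,u\pi(n)k}=\zeta^{\,nk}\qquad(0\le n<d).
$$
Now define $T\colon E_d\to E_d$ by $T(\epsilon)_{\pi(n)}:=\tau_n\epsilon_n$; it is a permutation of $E_d$, being the composition of the index permutation $\pi$ with the coordinatewise sign flips $\epsilon_n\mapsto\tau_n\epsilon_n$. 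Reindexing the sum defining $H^{(\eta)}_{d,k}$ by $m=\pi(n)$ and invoking the displayed identity,
$$
H^{(\eta)}_{d,k}\bigl(T(\epsilon)\bigr)=\sum_{n=0}^{d-1}\tau_n\epsilon_n\,\eta^{\pi(n)k}=\sum_{n=0}^{d-1}\epsilon_n\,\zeta^{nk}=H^{(\zeta)}_{d,k}(\epsilon),
$$
which is the assertion.

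I do not expect a genuine obstacle: this is essentially a change-of-variables bookkeeping lemma. The only subtlety is the sign correction $\tau_n$, which is necessary because $\{0,\dots,d-1\}$ is a set of representatives for $\Z/d\Z$, not for $\Z/2d\Z$, so multiplying an index by $u^{-1}$ and reducing modulo $d$ may change the exponent $nk$ by an odd multiple of $d$ and thereby introduce a factor $\zeta^{d}=-1$. Once one notes that $k\bigl(n-u\pi(n)\bigr)\in d\Z$ makes each such discrepancy exactly $\pm1$, the proof closes. One could also phrase the argument abstractly via the automorphism $x\mapsto x^{u}$ of $\mu_{2d}$, but the explicit description of $T$ above will be convenient when transporting the solution sets $V_{d,k}^{(\zeta)}$ later.
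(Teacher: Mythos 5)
Your proof is correct, and it follows the same basic route as the paper: write $\eta=\zeta^{u}$ with $\gcd(u,2d)=1$ and permute the coordinates by $n\mapsto u^{-1}n \bmod d$. The difference is your sign correction $\tau_n=\zeta^{k(n-u\pi(n))}\in\{\pm1\}$, and it is not merely cosmetic: the paper's own proof takes the bare index permutation $T(\epsilon)_n=\epsilon_{\tau(n)}$ and asserts $\sum_{n}\epsilon_n\eta^{(un)k}=\sum_{m}\epsilon_{\tau(m)}\eta^{mk}$, which silently identifies $\eta^{(un)k}$ with $\eta^{(un\bmod d)k}$; since $\eta^{dk}=-1$ for odd $k$, these differ by the factor $(-1)^{\lfloor un/d\rfloor}$, so that identity fails in general. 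A concrete check: for $p=13$, $d=2$, $k=1$, $\zeta=5$, $\eta=8=\zeta^{3}$ (so $u=3$ and $u^{-1}\equiv1\bmod 2$), the paper's $T$ is the identity and its claimed equality would read $\epsilon_0+5\epsilon_1=\epsilon_0+8\epsilon_1$, which is false; your map $T(\epsilon)=(\epsilon_0,-\epsilon_1)$ gives the correct statement. Since an index permutation composed with fixed coordinatewise sign flips is still a permutation of $E_d$, your argument proves the lemma exactly as stated (and hence the corollary on $|V_{d,K}|$), and in fact repairs the sign oversight in the paper's computation; it would be worth incorporating your explicit description of $T$ into the text.
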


\begin{proof}
Since $\zeta$ and $\eta$ are both primitive $2d$-th roots, there exists an integer $u$ with $\gcd(u,2d) = 1$ such that $\eta = \zeta^u$. Consider the permutation
$$
\tau : \Z/d\Z \to \Z/d\Z, \qquad \tau(n) = u^{-1} n \pmod d,
$$
where $u^{-1}$ is the inverse of $u$ in $\Z/d\Z$ (which exists since $\gcd(u,d)=1$). Define $T : E_d \to E_d$ by
$$
T(\epsilon)_n := \epsilon_{\tau(n)}.
$$
Then
$$
\begin{aligned}
H_{d,k}^{(\zeta)}(\epsilon)
&= \sum_{n=0}^{d-1} \epsilon_n \zeta^{nk}
= \sum_{n=0}^{d-1} \epsilon_n (\eta^{u})^{nk}
= \sum_{n=0}^{d-1} \epsilon_n \eta^{(un)k} \\
&= \sum_{m=0}^{d-1} \epsilon_{\tau(m)} \eta^{mk}
= H_{d,k}^{(\eta)}\bigl(T(\epsilon)\bigr).
\end{aligned}
$$
\end{proof}

\begin{corollary}
Let $p$ be a prime, $d \mid r$, and let $K$ be a set of odd integers. If $\zeta$ and $\eta$ are any two primitive $2d$-th roots of unity in $\F_p^\times$, then
$$
\bigl|V_{d,K}^{(\zeta)}\bigr| = \bigl|V_{d,K}^{(\eta)}\bigr|.
$$
\end{corollary}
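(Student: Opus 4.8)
The plan is to reduce immediately to Lemma~\ref{lem:root choice independence}, observing that the permutation it produces is \emph{uniform in} $k$. Concretely: write $\eta = \zeta^u$ with $\gcd(u,2d)=1$, and let $T : E_d \to E_d$ be the permutation $T(\epsilon)_n = \epsilon_{\tau(n)}$ with $\tau(n) \equiv u^{-1}n \pmod d$, exactly as in the proof of that lemma. The crucial point is that $T$ depends only on the pair $(\zeta,\eta)$ (through $u$), and not on $k$. Hence a single $T$ satisfies $H_{d,k}^{(\zeta)}(\epsilon) = H_{d,k}^{(\eta)}(T(\epsilon))$ for \emph{every} odd integer $k$ at once.

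From this the corollary is a one-line deduction. For any $\epsilon \in E_d$ we have
$$
\epsilon \in V_{d,K}^{(\zeta)}
\iff H_{d,k}^{(\zeta)}(\epsilon) = 0 \ \ \forall k \in K
\iff H_{d,k}^{(\eta)}(T(\epsilon)) = 0 \ \ \forall k \in K
\iff T(\epsilon) \in V_{d,K}^{(\eta)} .
$$
Thus $T$ restricts to a bijection $V_{d,K}^{(\zeta)} \xrightarrow{\sim} V_{d,K}^{(\eta)}$, and since $T$ is a bijection of $E_d$ this gives $|V_{d,K}^{(\zeta)}| = |V_{d,K}^{(\eta)}|$.

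The only thing to be careful about is making explicit that the $T$ appearing in Lemma~\ref{lem:root choice independence} is genuinely $k$-independent (it is, since in its construction $k$ only rides along as a scalar in the exponent and never enters the definition of $\tau$); once that is noted there is no real obstacle, and in particular no need to invoke the lemma separately for each $k \in K$ and then try to patch incompatible permutations together.
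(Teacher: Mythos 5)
Your proof is correct and is exactly the route the paper intends: the corollary is stated as an immediate consequence of Lemma~\ref{lem:root choice independence}, precisely because the permutation $T$ constructed there (via $\tau(n)\equiv u^{-1}n \pmod d$) does not depend on $k$, so one bijection handles all $k\in K$ simultaneously and restricts to a bijection $V_{d,K}^{(\zeta)}\to V_{d,K}^{(\eta)}$. Your remark that $k$-independence is the essential point (and that per-$k$ permutations would not suffice for the intersection) is exactly the right observation.
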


We now relate these definitions back to square-root polynomials.

\begin{proposition}\label{prop: elements of V_r,K give rise to low degree polynomials}
Let $r = \frac{p-1}{2}$, and let $\zeta$ be a primitive $2r$-th root of unity. For each $\epsilon \in E_r$, let $f_\epsilon \in F$ denote the unique polynomial such that
$$
f_\epsilon(\zeta^{2n}) = \epsilon_n \zeta^{n} \qquad (0 \le n < r).
$$
Then
$$
\deg(f_\epsilon) \le d \quad\iff\quad \epsilon \in V_{r,K_d},
$$
where
$$
K_d := \{\,1 - 2(d+1),\,1 - 2(d+2),\,\dots,\,1 - 2(r-1)\,\}.
$$
\end{proposition}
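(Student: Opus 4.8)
The plan is to unwind both sides of the claimed equivalence via the Fourier-inversion formula \eqref{eq: fourier inv coef}. By construction $f_\epsilon$ is the degree-$\le r-1$ interpolant of the $r$ values $f_\epsilon(\zeta^{2n})=\epsilon_n\zeta^n$, so write $f_\epsilon(x)=\sum_{k=0}^{r-1}c_k x^k$. Applying Theorem \ref{thm: fourier-H} with $H=S=\mu_r$ (so the primitive $2r$-th root $\zeta$ plays the role of the ``$2d$-th root'' with $d=r$) gives
$$
c_k = \frac{1}{r}\sum_{n=0}^{r-1}\epsilon_n \zeta^{n(1-2k)} = \frac{1}{r}\,H_{r,\,1-2k}^{(\zeta)}(\epsilon),
$$
and $1-2k$ is odd for every integer $k$, so the signed half sums $H_{r,1-2k}^{(\zeta)}$ and the sets $V_{r,1-2k}^{(\zeta)}$ are all well-defined.

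Next I would rephrase ``$\deg f_\epsilon\le d$'' as the vanishing of the top coefficients: since $f_\epsilon$ already has degree $\le r-1$, we have $\deg f_\epsilon\le d$ if and only if $c_k=0$ for every $k$ with $d+1\le k\le r-1$ (vacuously true, hence automatic, when $d\ge r-1$). Because $r$ is invertible in $\F_p$, the displayed formula shows $c_k=0 \iff H_{r,1-2k}^{(\zeta)}(\epsilon)=0 \iff \epsilon\in V_{r,1-2k}^{(\zeta)}$.

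Finally, I would observe that as $k$ ranges over $\{d+1,\dots,r-1\}$ the integer $1-2k$ ranges over exactly the set $K_d=\{1-2(d+1),\dots,1-2(r-1)\}$, whence
$$
\deg f_\epsilon\le d \iff \epsilon\in \bigcap_{k=d+1}^{r-1} V_{r,1-2k}^{(\zeta)} = V_{r,K_d}^{(\zeta)}
$$
by the definition of $V_{r,K_d}$, which is the claim.

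I do not expect any real obstacle: the proposition is essentially a repackaging of the Fourier-inversion identity \eqref{eq: fourier inv coef} together with the definition of $V_{d,K}$. The only points needing a little care are confirming that each exponent $1-2k$ is odd (so the signed-half-sum formalism genuinely applies), and using the degree-$\le r-1$ normalization of $f_\epsilon$ to reduce $\deg f_\epsilon\le d$ to the vanishing of precisely $c_{d+1},\dots,c_{r-1}$.
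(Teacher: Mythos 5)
Your argument is correct and follows exactly the paper's own proof: apply the Fourier-inversion formula with $H=S=\mu_r$ to identify $c_k=\tfrac1r H_{r,\,1-2k}(\epsilon)$, note $r$ is invertible, and translate the vanishing of $c_{d+1},\dots,c_{r-1}$ into membership in $\bigcap_{k=d+1}^{r-1}V_{r,\,1-2k}=V_{r,K_d}$. Nothing further is needed.
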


\begin{proof}
By Fourier inversion on $H = S = \mu_r$ (Theorem \ref{thm: fourier-H}), the coefficients of $f_\epsilon(x) = \sum_{k=0}^{r-1} c_k x^k$ satisfy
$$
c_k = \frac{1}{r} \sum_{n=0}^{r-1} \epsilon_n \zeta^{n(1-2k)}
= \frac{1}{r} H_{r,\,1-2k}(\epsilon).
$$
Thus, for $d+1 \le k \le r-1$,
$$
c_k = 0
\iff \sum_{n=0}^{r-1} \epsilon_n \zeta^{n(1-2k)} = 0
\iff H_{r,\,1-2k}(\epsilon) = 0
\iff \epsilon \in V_{r,\,1-2k}.
$$
Hence $c_k = 0$ for all $k \ge d+1$ if and only if $\epsilon \in \bigcap_{k=d+1}^{r-1} V_{r,\,1-2k} = V_{r,K_d}$. This is equivalent to $\deg(f_\epsilon) \le d$.
\end{proof}

\subsection{Flip shifts}\label{sec: Flip Shift}

Fix a divisor $d \mid r$.

\begin{definition}
The \emph{flip shift} map $\sigma : E_d \to E_d$ is defined by
$$
\sigma(\epsilon)_i =
\begin{cases}
\epsilon_{i+1}, & 0 \le i \le d-2,\\
-\epsilon_0, & i = d-1.
\end{cases}
$$
Equivalently,
$$
\sigma(\epsilon) = (\epsilon_1,\dots,\epsilon_{d-1}, -\epsilon_0).
$$

More generally, for $n \in \Z$ we define
$$
\sigma^n(\epsilon)_i = (-1)^{\left\lfloor \frac{i+n}{d} \right\rfloor} \epsilon_{\,i+n \bmod d}.
$$
One checks directly that this agrees with iterating the basic flip shift.
\end{definition}

Let $C_\sigma$ be the cyclic group generated by $\sigma$ under composition.

\begin{proposition}\label{prop: properties of flip shift}
For any $\epsilon \in E_d$,
\begin{enumerate}
\item $\sigma(-\epsilon) = -\sigma(\epsilon)$.
\item $\sigma^d(\epsilon) = -\epsilon$.
\item $\sigma^{2d}(\epsilon) = \epsilon$.
\item $|C_\sigma| = 2d$.
\end{enumerate}
\end{proposition}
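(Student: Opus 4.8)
The plan is to prove the four claims in the order (1), (2), (3), (4), since each is used in establishing the next. The whole argument is a direct unwinding of the explicit formula
$$
\sigma^n(\epsilon)_i = (-1)^{\lfloor (i+n)/d \rfloor}\,\epsilon_{\,i+n \bmod d},
$$
together with the elementary identity $\lfloor (x+md)/d\rfloor = \lfloor x/d\rfloor + m$ for integers $m$. I would first note that the displayed formula really is consistent with iterating the basic flip shift (the excerpt already asserts this, so I may invoke it), which reduces everything to manipulating the closed form.

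For (1): replacing $\epsilon$ by $-\epsilon$ in the definition of the basic $\sigma$ negates every entry, so $\sigma(-\epsilon)=-\sigma(\epsilon)$ is immediate from $\sigma(\epsilon)=(\epsilon_1,\dots,\epsilon_{d-1},-\epsilon_0)$. For (2): evaluate the closed form at $n=d$. For each $0\le i\le d-1$ we have $0\le i+d\le 2d-1$, hence $\lfloor (i+d)/d\rfloor = 1$ and $i+d \bmod d = i$, so $\sigma^d(\epsilon)_i = (-1)^1 \epsilon_i = -\epsilon_i$, i.e. $\sigma^d(\epsilon) = -\epsilon$. For (3): apply (2) twice, or better, combine (1) and (2): $\sigma^{2d}(\epsilon) = \sigma^d(\sigma^d(\epsilon)) = \sigma^d(-\epsilon) = -\sigma^d(\epsilon) = -(-\epsilon) = \epsilon$, where the middle equality uses (1) iterated $d$ times (equivalently, $\sigma^d$ is $\pm$-equivariant because $\sigma$ is).

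For (4), I must show the order of $\sigma$ in the symmetric group on $E_d$ is exactly $2d$, not a proper divisor. Part (3) gives $\sigma^{2d}=\id$, so $|C_\sigma|$ divides $2d$; I need a lower bound. Suppose $\sigma^n=\id$ for some $1\le n< 2d$. Pick a specific witness, e.g. $\epsilon^\star = (1,1,\dots,1)$. From the closed form, $\sigma^n(\epsilon^\star)_i = (-1)^{\lfloor (i+n)/d\rfloor}$. Taking $i=0$ gives $\sigma^n(\epsilon^\star)_0 = (-1)^{\lfloor n/d\rfloor}$, which forces $\lfloor n/d\rfloor$ to be even, so $0\le n< d$ (the only even value of $\lfloor n/d\rfloor$ in the range $0\le n<2d$ is $0$). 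But if $1\le n< d$, then looking at the index $i = d-n$ (which lies in $\{1,\dots,d-1\}$) we get $i+n = d$, so $\sigma^n(\epsilon^\star)_{d-n} = (-1)^{1}\epsilon_0 = -1 \ne 1 = \epsilon^\star_{d-n}$, a contradiction; and $n=0$ is excluded. Hence no such $n$ exists, $|C_\sigma|=2d$. The only mild subtlety — and the one place to be slightly careful — is the edge case $d=1$, where $E_1 = \{(1),(-1)\}$, $\sigma$ is the swap, and $2d=2$ is indeed its order; the argument above still goes through since $i=d-n$ then forces $n=0$, leaving only $\sigma$ itself of order $2$. I do not expect any real obstacle here; the proof is a short computation, and the only thing to guard against is an off-by-one in the floor-function bookkeeping.
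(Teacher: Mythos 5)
Your proposal is correct, and for parts (1)--(3) it follows the paper's proof essentially verbatim: both arguments just evaluate the closed formula $\sigma^n(\epsilon)_i=(-1)^{\lfloor (i+n)/d\rfloor}\epsilon_{i+n \bmod d}$ at $n=d$ and $n=2d$, with (1) immediate from the definition.

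For part (4) you are in fact more careful than the paper. The paper's justification is the one-liner ``since $\sigma^{2d}=\mathrm{id}$ and $\sigma^d\ne\mathrm{id}$, the order of $\sigma$ is exactly $2d$,'' which, read literally, only excludes orders dividing $d$; it does not by itself rule out a proper divisor of $2d$ that fails to divide $d$ (e.g.\ an order of the form $2d/k$ with $k$ an odd divisor of $d$), so a small amount of extra reasoning is implicitly being elided there. Your argument closes this gap directly: using the witness $\epsilon^\star=(1,\dots,1)$ you show $\sigma^n(\epsilon^\star)\ne\epsilon^\star$ for every $1\le n<2d$ (the case $d\le n<2d$ via the index $i=0$, the case $1\le n<d$ via $i=d-n$), which pins the order down to exactly $2d$ with no appeal to the group-theoretic shortcut. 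The conclusion is the same either way, but your version is self-contained and would survive scrutiny of the edge cases (including $d=1$) without any further comment.
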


\begin{proof}
Using the explicit formula for $\sigma^n(\epsilon)_i$ with $n=d$ and $n=2d$ gives (2) and (3), and (1) is immediate from linearity in the signs. Since $\sigma^{2d} = \mathrm{id}$ and $\sigma^d \ne \mathrm{id}$, the order of $\sigma$ is exactly $2d$, so $|C_\sigma| = 2d$.
\end{proof}

The flip shift preserves the signed half-sum equations:

\begin{proposition}\label{prop: half sum solns are flip shift invariant}
Let $p$ be a prime, $d \mid r$, and let $k$ be an odd integer. Then, for all $\epsilon \in E_d$,
$$
\epsilon \in V_{d,k} \quad\iff\quad \sigma(\epsilon) \in V_{d,k}.
$$
\end{proposition}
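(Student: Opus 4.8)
The plan is to compute $H_{d,k}^{(\zeta)}(\sigma(\epsilon))$ directly in terms of $H_{d,k}^{(\zeta)}(\epsilon)$ and show the two differ only by a nonzero scalar factor, so that one vanishes precisely when the other does. Starting from the definition $\sigma(\epsilon) = (\epsilon_1,\dots,\epsilon_{d-1},-\epsilon_0)$, I would write
$$
H_{d,k}^{(\zeta)}(\sigma(\epsilon)) = \sum_{n=0}^{d-2} \epsilon_{n+1}\zeta^{nk} \;-\; \epsilon_0 \zeta^{(d-1)k}.
$$
Re-indexing the first sum with $m = n+1$ turns it into $\sum_{m=1}^{d-1} \epsilon_m \zeta^{(m-1)k} = \zeta^{-k}\sum_{m=1}^{d-1}\epsilon_m\zeta^{mk}$. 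The key point is that the stray term $-\epsilon_0\zeta^{(d-1)k}$ can be folded back in: since $\zeta$ is a primitive $2d$-th root of unity and $k$ is odd, $\zeta^{dk} = (\zeta^d)^k = (-1)^k = -1$, so $\zeta^{(d-1)k} = -\zeta^{-k}$, hence $-\epsilon_0\zeta^{(d-1)k} = \zeta^{-k}\epsilon_0 = \zeta^{-k}\epsilon_0\zeta^{0\cdot k}$. Therefore
$$
H_{d,k}^{(\zeta)}(\sigma(\epsilon)) = \zeta^{-k}\left(\epsilon_0\zeta^{0} + \sum_{m=1}^{d-1}\epsilon_m\zeta^{mk}\right) = \zeta^{-k}\, H_{d,k}^{(\zeta)}(\epsilon).
$$

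Since $\zeta^{-k}\in\F_p^\times$ is nonzero, this identity gives immediately that $H_{d,k}^{(\zeta)}(\sigma(\epsilon)) = 0$ if and only if $H_{d,k}^{(\zeta)}(\epsilon) = 0$, which is exactly the claimed equivalence $\epsilon \in V_{d,k} \iff \sigma(\epsilon)\in V_{d,k}$. Because the statement is symmetric in $\epsilon$ and $\sigma(\epsilon)$ and $\sigma$ is a bijection on $E_d$ (Proposition \ref{prop: properties of flip shift}), no separate converse argument is needed — the scalar identity handles both directions at once.

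The only place requiring care — and the step I'd expect to be the main (mild) obstacle — is the use of $\zeta^{dk} = -1$: this is where the parity of $k$ and the primitivity of $\zeta$ as a $2d$-th (rather than $d$-th) root are both essential, and it is precisely the mechanism by which the "flip" in the flip shift compensates for the wraparound. Everything else is bookkeeping with a geometric re-indexing. I would also remark that the same computation shows more generally $H_{d,k}^{(\zeta)}(\sigma^n(\epsilon)) = \zeta^{-nk}H_{d,k}^{(\zeta)}(\epsilon)$ for all $n$, which makes the invariance of $V_{d,k}$ under the whole cyclic group $C_\sigma$ transparent and will presumably be used in the sequel.
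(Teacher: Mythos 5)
Your proof is correct and is essentially the paper's argument: both establish the scalar identity relating $H_{d,k}(\epsilon)$ and $H_{d,k}(\sigma(\epsilon))$ (the paper writes it as $H_{d,k}(\epsilon) = \zeta^{k} H_{d,k}(\sigma(\epsilon))$, you as $H_{d,k}(\sigma(\epsilon)) = \zeta^{-k} H_{d,k}(\epsilon)$), using $\zeta^{dk} = -1$ from the oddness of $k$ and the primitivity of $\zeta$ as a $2d$-th root to absorb the flipped term. The only difference is the direction of the bookkeeping, so no further comment is needed.
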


\begin{proof}
Let $\zeta$ be a primitive $2d$-th root of unity. We show
$$
H_{d,k}(\epsilon) = \zeta^k H_{d,k}(\sigma(\epsilon)).
$$
Indeed,
$$
\begin{aligned}
H_{d,k}(\epsilon)
&= \sum_{n=0}^{d-1} \epsilon_n \zeta^{nk}
= \zeta^k \sum_{n=0}^{d-1} \epsilon_n \zeta^{(n-1)k} \\
&= \zeta^k \left( \epsilon_0 \zeta^{-k} + \sum_{m=0}^{d-2} \epsilon_{m+1} \zeta^{mk} \right).
\end{aligned}
$$
Since $o(\zeta) = 2d$, we have $\zeta^d = -1$, and as $k$ is odd, $\zeta^{kd} = -1$. Thus
$$
\epsilon_0 \zeta^{-k} = -\epsilon_0 \zeta^{(d-1)k}.
$$
Using $\sigma(\epsilon)_m = \epsilon_{m+1}$ for $0 \le m \le d-2$ and $\sigma(\epsilon)_{d-1} = -\epsilon_0$, we get
$$
\begin{aligned}
H_{d,k}(\epsilon)
&= \zeta^k \left( -\epsilon_0 \zeta^{(d-1)k} + \sum_{m=0}^{d-2} \epsilon_{m+1} \zeta^{mk} \right) \\
&= \zeta^k \sum_{m=0}^{d-1} \sigma(\epsilon)_m \zeta^{mk}
= \zeta^k H_{d,k}(\sigma(\epsilon)).
\end{aligned}
$$
Since $\zeta^k$ is a unit, $H_{d,k}(\epsilon) = 0$ if and only if $H_{d,k}(\sigma(\epsilon)) = 0$, proving the claim.
\end{proof}

\subsection{Alternating order}

Consider the orbit of $\epsilon \in E_d$ under the action of $C_\sigma$:
$$
\orbit_\epsilon := \{\sigma^n(\epsilon) : n \in \Z\}.
$$
By Proposition \ref{prop: properties of flip shift}, $\sigma^{2d}(\epsilon) = \epsilon$ and $\sigma^d(\epsilon) = -\epsilon$, so
$-\epsilon \in \orbit_\epsilon$ and $(-\epsilon)\cdot(-\epsilon) = \epsilon$ in this orbit.

\begin{definition}
For $\epsilon \in E_d$, the \emph{order} of $\epsilon$ is
$$
o(\epsilon) := |\orbit_\epsilon|,
$$
and the \emph{alternating order} of $\epsilon$ is
$$
a(\epsilon) := \frac{|\orbit_\epsilon|}{2}.
$$
\end{definition}

\begin{proposition}\label{prop: properties of alternating order}
Let $\epsilon \in E_d$. Then:
\begin{enumerate}
\item $o(\epsilon)$ is the least positive integer $n$ such that $\sigma^n(\epsilon) = \epsilon$, and $a(\epsilon)$ is the least positive integer $m$ such that $\sigma^m(\epsilon) = -\epsilon$.
\item For every $n \in \Z$, $o(\sigma^n(\epsilon)) = o(\epsilon)$.
\item $a(\epsilon) \mid d$ and $o(\epsilon) \mid 2d$.
\end{enumerate}
\end{proposition}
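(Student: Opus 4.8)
The plan is to treat all three parts as orbit–stabilizer bookkeeping inside the cyclic group $C_\sigma$, which by Proposition \ref{prop: properties of flip shift} has order $2d$, using only the two structural facts already established: $\sigma^d(\epsilon) = -\epsilon$ and $\sigma^n(-\epsilon) = -\sigma^n(\epsilon)$ for all $n$ (the latter obtained by iterating part (1) of that proposition, or directly from the explicit formula for $\sigma^n$, which is linear in the sign vector).

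For (1), I would first observe that the stabilizer $\mathrm{Stab}(\epsilon) = \{\sigma^n \in C_\sigma : \sigma^n(\epsilon) = \epsilon\}$ is a subgroup of a cyclic group, hence cyclic, generated by $\sigma^{n_0}$ where $n_0 \ge 1$ is minimal with $\sigma^{n_0}(\epsilon) = \epsilon$; the orbit map $C_\sigma \to \orbit_\epsilon$ then identifies $\orbit_\epsilon$ with the coset space, so $o(\epsilon) = |\orbit_\epsilon| = [C_\sigma : \mathrm{Stab}(\epsilon)] = n_0$, and moreover $\sigma^n(\epsilon) = \sigma^{n'}(\epsilon)$ iff $n \equiv n' \pmod{o(\epsilon)}$. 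For the alternating part: since the entries of $\epsilon$ are $\pm 1$ and $d \ge 1$ we have $\epsilon \ne -\epsilon$, yet $-\epsilon = \sigma^d(\epsilon) \in \orbit_\epsilon$, so there is a unique $m_0$ with $1 \le m_0 < o(\epsilon)$ and $\sigma^{m_0}(\epsilon) = -\epsilon$. Applying $\sigma^{m_0}$ once more and using $\sigma^{m_0}(-\epsilon) = -\sigma^{m_0}(\epsilon)$ gives $\sigma^{2m_0}(\epsilon) = \epsilon$, hence $o(\epsilon) \mid 2m_0$; since $0 < 2m_0 < 2\,o(\epsilon)$, this forces $2m_0 = o(\epsilon)$. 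Thus $o(\epsilon)$ is even — retroactively justifying that $a(\epsilon) = |\orbit_\epsilon|/2$ is an integer — and $m_0 = o(\epsilon)/2 = a(\epsilon)$; minimality of $a(\epsilon)$ among positive solutions of $\sigma^m(\epsilon) = -\epsilon$ is then immediate, as any such $m$ is $\equiv m_0 \pmod{o(\epsilon)}$ and $m_0 < o(\epsilon)$.

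For (2), the $C_\sigma$-orbits partition $E_d$, and $\sigma^m(\sigma^n(\epsilon)) = \sigma^{m+n}(\epsilon)$ shows $\orbit_{\sigma^n(\epsilon)} = \orbit_\epsilon$ as sets; taking cardinalities gives $o(\sigma^n(\epsilon)) = o(\epsilon)$. For (3), I would use the congruence description from (1): $\sigma^m(\epsilon) = -\epsilon$ holds iff $m \equiv a(\epsilon) \pmod{o(\epsilon)}$, and since $o(\epsilon) = 2a(\epsilon)$ this says precisely that $m$ is an odd multiple of $a(\epsilon)$. Applying this to $m = d$ (valid since $\sigma^d(\epsilon) = -\epsilon$ and $d > 0$) shows $d$ is an odd multiple of $a(\epsilon)$; in particular $a(\epsilon) \mid d$, and then $o(\epsilon) = 2a(\epsilon) \mid 2d$.

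I do not expect a genuine obstacle: the argument is entirely formal once Proposition \ref{prop: properties of flip shift} is available. The one place that needs care is the squeeze $2m_0 = o(\epsilon)$ in part (1), since the very definition $a(\epsilon) = |\orbit_\epsilon|/2$ silently presumes the orbit has even size; the combination of $o(\epsilon) \mid 2m_0$ with the strict inequality $0 < 2m_0 < 2\,o(\epsilon)$ is exactly what makes this rigorous, and it simultaneously pins down $a(\epsilon) = m_0$ as the least positive $m$ with $\sigma^m(\epsilon) = -\epsilon$.
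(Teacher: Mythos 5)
Your argument is correct, and for part (iii) it rests on the same idea as the paper: the orbit--stabilizer count inside the cyclic group $C_\sigma$ of order $2d$ from Proposition \ref{prop: properties of flip shift}. The differences are in completeness and in how $a(\epsilon)\mid d$ is extracted. The paper's printed proof only treats (iii), phrases the orbit map as a ``surjective group homomorphism'' onto $\orbit_\epsilon$ (an abuse, since the orbit is not a group, though the intended index computation is the standard one), and then deduces $a(\epsilon)\mid d$ from $o(\epsilon)\mid 2d$ together with $a(\epsilon)=o(\epsilon)/2$ --- which silently assumes $o(\epsilon)$ is even, i.e.\ that $a(\epsilon)$ is an integer at all. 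Your proof supplies exactly the missing ingredients: the stabilizer analysis proves (i) (which the paper leaves implicit), the set equality $\orbit_{\sigma^n(\epsilon)}=\orbit_\epsilon$ handles (ii), and the squeeze $o(\epsilon)\mid 2m_0$, $0<2m_0<2\,o(\epsilon)$ establishes that $o(\epsilon)=2m_0$ is even, legitimizing the definition of $a(\epsilon)$ and identifying it with the least positive $m$ satisfying $\sigma^m(\epsilon)=-\epsilon$. Your route to (iii) via the congruence class $\{m: \sigma^m(\epsilon)=-\epsilon\}=a(\epsilon)+o(\epsilon)\Z$ applied to $m=d$ in fact yields the stronger conclusion that $d/a(\epsilon)$ is odd, which the proposition does not state but which is consistent with (and foreshadows) Theorem \ref{thm: alternating order form}. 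So: same underlying mechanism, but your write-up is more careful and slightly stronger than the paper's.
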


\begin{proof}[Proof of (iii)]
The map
$$
\psi : C_\sigma \to \orbit_\epsilon, \qquad \psi(\sigma^n) = \sigma^n(\epsilon)
$$
is a surjective group homomorphism. Thus
$$
o(\epsilon) = |\orbit_\epsilon| = |C_\sigma : \ker\psi|
$$
divides $|C_\sigma| = 2d$, so $o(\epsilon) \mid 2d$. As $a(\epsilon) = \frac{o(\epsilon)}{2}$, we have $a(\epsilon) \mid d$.
\end{proof}

\begin{proposition}\label{prop: num solns with a(epsilon) = d is divisible by d}
Fix a set $K$ of odd integers. For a positive integer $d$, define
$$
A := \bigl\{\epsilon \in V_{r,K} : o(\epsilon) = d\bigr\}.
$$
Then $d$ divides $|A|$.
\end{proposition}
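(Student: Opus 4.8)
The plan is to exhibit $A$ as a disjoint union of $C_\sigma$-orbits, each of cardinality exactly $d$; once this is done, $|A|$ is visibly a multiple of $d$ and there is nothing more to prove.

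\textbf{Step 1: $V_{r,K}$ is stable under the flip shift.} I would first observe that the cyclic group $C_\sigma$ acts on $V_{r,K}$. For each odd $k$, Proposition \ref{prop: half sum solns are flip shift invariant} gives $\epsilon \in V_{r,k} \iff \sigma(\epsilon) \in V_{r,k}$, so each $V_{r,k}$ is $\sigma$-stable. Since $V_{r,K} = \bigcap_{k \in K} V_{r,k}$ is an intersection of $\sigma$-stable sets, it too is $\sigma$-stable, hence invariant under all of $C_\sigma$. Therefore $V_{r,K}$ decomposes as a disjoint union of $C_\sigma$-orbits $\orbit_\epsilon$.

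\textbf{Step 2: $A$ is a union of full orbits.} By Proposition \ref{prop: properties of alternating order}(ii), the order function is constant along orbits: $o(\sigma^n(\epsilon)) = o(\epsilon)$ for all $n$. Hence the condition ``$o(\epsilon) = d$'' depends only on the orbit of $\epsilon$, and $A = \{\epsilon \in V_{r,K} : o(\epsilon) = d\}$ is precisely the union of those $C_\sigma$-orbits inside $V_{r,K}$ on which the common value of $o$ equals $d$. In particular $A$ is itself a disjoint union of $C_\sigma$-orbits.

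\textbf{Step 3: Count.} By the very definition $o(\epsilon) = |\orbit_\epsilon|$, every orbit appearing in the decomposition of $A$ has cardinality exactly $d$. Since distinct orbits are disjoint, $|A| = d \cdot (\text{number of such orbits})$, so $d \mid |A|$, as claimed.

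The argument is short, and the only point that needs a moment's care is the bookkeeping in Step 2 — namely that $A$ consists of \emph{whole} orbits rather than partial ones — which is exactly what the two cited propositions supply ($\sigma$-invariance of $V_{r,K}$, and invariance of $o$ under $\sigma$). Beyond assembling those two facts there is no real obstacle.
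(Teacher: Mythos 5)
Your argument is correct and follows essentially the same route as the paper: both proofs use the flip-shift invariance of $V_{r,K}$ (Proposition \ref{prop: half sum solns are flip shift invariant}) together with the $\sigma$-invariance of the order (Proposition \ref{prop: properties of alternating order}) to see that $C_\sigma$ acts on $A$, and then note that every orbit in $A$ has size exactly $d$, so the orbits partition $A$ into blocks of size $d$. No gaps; nothing further is needed.
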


\begin{proof}
By Proposition \ref{prop: half sum solns are flip shift invariant}, for each $k \in K$,
$$
\epsilon \in V_{r,k} \iff \sigma(\epsilon) \in V_{r,k},
$$
and hence
$$
\epsilon \in V_{r,K} \iff \sigma(\epsilon) \in V_{r,K}.
$$
By Proposition \ref{prop: properties of alternating order}(ii) and (iii), we also have
$$
o(\epsilon) = d \iff o(\sigma(\epsilon)) = d.
$$
Thus $\epsilon \in A$ if and only if $\sigma(\epsilon) \in A$, so $C_\sigma$ acts on $A$. By definition of $A$, each orbit under this action has size $d$, and these orbits partition $A$. Therefore $d$ divides $|A|$.
\end{proof}

The next theorem explains the terminology “alternating order”.

\begin{theorem}\label{thm: alternating order form}
Let $\epsilon \in E_r$. Then $a(\epsilon)$ is the least positive integer $d$ such that
\begin{enumerate}
\item $\dfrac{r}{d}$ is odd, and
\item
$$
\epsilon_{dx+y} = (-1)^x \epsilon_y
\qquad \text{for all } 0 \le x < \frac{r}{d},\ 0 \le y < d.
$$
\end{enumerate}
\end{theorem}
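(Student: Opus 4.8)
The plan is to collapse conditions (1) and (2) into the single identity $\sigma^{d}(\epsilon) = -\epsilon$, where $\sigma$ is the flip shift on $E_r$, and then read off the theorem from Proposition~\ref{prop: properties of alternating order}.

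\textbf{Step 1: the key equivalence.} I would first show that, for a divisor $d \mid r$ with quotient $L := r/d$, one has $\sigma^{d}(\epsilon) = -\epsilon$ if and only if $L$ is odd and $\epsilon_{dx+y} = (-1)^{x}\epsilon_{y}$ for all $0 \le x < L$, $0 \le y < d$ — that is, if and only if (1) and (2) hold. (Writing ``$r/d$ is odd'' already presupposes $d \mid r$, so there is nothing extra to verify there.) This is a direct computation from the closed formula $\sigma^{d}(\epsilon)_{i} = (-1)^{\lfloor (i+d)/r\rfloor}\,\epsilon_{\,i+d \bmod r}$. For $0 \le i < r-d$ the sign is $+1$ and $\sigma^{d}(\epsilon)_i = \epsilon_{i+d}$, so on this block the equation $\sigma^d(\epsilon) = -\epsilon$ reads $\epsilon_{i+d} = -\epsilon_i$; iterating this from each residue $y \in \{0,\dots,d-1\}$ gives exactly $\epsilon_{dx+y} = (-1)^x \epsilon_y$ for $0 \le x \le L-1$, which is (2). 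For $r-d \le i < r$ the sign is $-1$ and $i+d \bmod r = i+d-r$, so the equation reads $\epsilon_{i+d-r} = \epsilon_i$; writing $i = d(L-1)+y$ this is $\epsilon_y = \epsilon_{d(L-1)+y}$, and substituting (2) yields $\epsilon_y = (-1)^{L-1}\epsilon_y$. Since $\epsilon_y \in \{\pm1\}$ is nonzero, this forces $L-1$ even, i.e.\ $L$ odd, which is (1). The converse direction is the same computation run in reverse: given (1) and (2), the two blocks of $\sigma^d(\epsilon) = -\epsilon$ follow from $\epsilon_{d(x+1)+y} = -\epsilon_{dx+y}$ and from $\epsilon_{d(L-1)+y} = (-1)^{L-1}\epsilon_y = \epsilon_y$ respectively.

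\textbf{Step 2: conclusion.} By Proposition~\ref{prop: properties of alternating order}, $a(\epsilon)$ is the least positive integer $m$ with $\sigma^m(\epsilon) = -\epsilon$, and moreover $a(\epsilon) \mid r$. Hence $d = a(\epsilon)$ is a divisor of $r$ with $\sigma^{a(\epsilon)}(\epsilon) = -\epsilon$, so by Step~1 it satisfies (1) and (2). Conversely, any $d$ satisfying (1) and (2) is a divisor of $r$ with $\sigma^d(\epsilon) = -\epsilon$, hence $d \ge a(\epsilon)$ by the minimality in Proposition~\ref{prop: properties of alternating order}. Therefore $a(\epsilon)$ is exactly the least $d$ satisfying (1) and (2).

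\textbf{Main obstacle.} There is no deep difficulty here: the argument is bookkeeping with the floor function and the cyclic index $i+d \bmod r$. The one place to be careful is the boundary block $r-d \le i < r$, which is precisely where condition (1) ``comes from'': matching it against (2) at $i = d(L-1)+y$ is what turns $\sigma^d(\epsilon)=-\epsilon$ into a parity statement about $L$. I would also explicitly dispatch the extreme case $d = r$ (so $L=1$): there the block $0 \le i < r-d$ is empty, (2) is vacuous, and $\sigma^r(\epsilon) = -\epsilon$ holds automatically by Proposition~\ref{prop: properties of flip shift}, so the set of admissible $d$ is nonempty and the minimum in the statement is well defined.
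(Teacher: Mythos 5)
Your proposal is correct and follows essentially the same route as the paper: both arguments reduce conditions (1)--(2) to the identity $\sigma^{d}(\epsilon)=-\epsilon$ via the explicit formula for $\sigma^{d}$ (interior indices giving the alternating relation by induction, the boundary block forcing $r/d$ odd), and then invoke Proposition~\ref{prop: properties of alternating order} (parts (i) and (iii)) to identify the minimum with $a(\epsilon)$. Your packaging of the computation as a single equivalence lemma, plus the explicit remark that $d=r$ always qualifies, is only a cosmetic reorganization of the paper's two-direction proof.
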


\begin{proof}
First suppose $d = a(\epsilon)$. Put $D := \frac{r}{d}$. By Proposition \ref{prop: properties of alternating order}(i), we have $\sigma^d(\epsilon) = -\epsilon$. For $0 \le x \le D-2$ and $0 \le y < d$, the index $dx+y+d$ is still less than $r$, so using the explicit formula for $\sigma^d$ on $E_r$,
$$
\begin{aligned}
-\epsilon_{dx+y}
&= \sigma^d(\epsilon)_{dx+y}
= (-1)^{\left\lfloor \frac{dx+y+d}{r} \right\rfloor} \epsilon_{dx+y+d}
= \epsilon_{d(x+1)+y},
\end{aligned}
$$
hence $\epsilon_{d(x+1)+y} = -\epsilon_{dx+y}$. By induction on $x$, this gives
$$
\epsilon_{dx+y} = (-1)^x \epsilon_y
\qquad (0 \le x \le D-1,\ 0 \le y < d).
$$

Taking $x = D-1$ in this formula yields
$$
\epsilon_{d(D-1)+y} = (-1)^{D-1} \epsilon_y.
$$
On the other hand, evaluating $\sigma^d(\epsilon)$ at the same index,
$$
\begin{aligned}
-\epsilon_{d(D-1)+y}
&= \sigma^d(\epsilon)_{d(D-1)+y}
= (-1)^{\left\lfloor \frac{d(D-1)+y+d}{r} \right\rfloor} \epsilon_{dD+y} \\
&= (-1)^{\left\lfloor 1 + \frac{y}{r} \right\rfloor} \epsilon_y
= -\epsilon_y.
\end{aligned}
$$
Combining these two identities gives
$$
(-1)^D \epsilon_y = -\epsilon_y
$$
for all $y$, so $(-1)^D = -1$ and hence $D$ is odd. Thus $r/d$ is odd and the claimed formula holds.

Conversely, suppose $m \mid r$, set $M := \frac{r}{m}$, assume $M$ is odd, and that
$$
\epsilon_{mx+y} = (-1)^x \epsilon_y
\qquad (0 \le x < M,\ 0 \le y < m).
$$
For $0 \le x \le M-2$ and $0 \le y < m$, we have $0 \le mx + y + m < r$, so
$$
\begin{aligned}
\sigma^m(\epsilon)_{mx+y}
&= (-1)^{\left\lfloor \frac{mx+y+m}{r} \right\rfloor} \epsilon_{mx+y+m} \\
&= \epsilon_{m(x+1)+y}
= (-1)^{x+1} \epsilon_y
= -\epsilon_{mx+y},
\end{aligned}
$$
using the assumed relation in the last step. For $x = M-1$, we get
$$
\begin{aligned}
\sigma^m(\epsilon)_{m(M-1)+y}
&= (-1)^{\left\lfloor \frac{m(M-1)+y+m}{r} \right\rfloor} \epsilon_{m(M-1)+y+m} \\
&= (-1)^{\left\lfloor 1 + \frac{y}{r} \right\rfloor} \epsilon_y
= -\epsilon_y.
\end{aligned}
$$
But $\epsilon_{m(M-1)+y} = (-1)^{M-1} \epsilon_y = \epsilon_y$ since $M$ is odd, so again $\sigma^m(\epsilon)_{m(M-1)+y} = -\epsilon_{m(M-1)+y}$. Thus $\sigma^m(\epsilon) = -\epsilon$, and by Proposition \ref{prop: properties of alternating order}(i) we must have $a(\epsilon) \le m$.

Taking $m = a(\epsilon)$ in the first part and applying this minimality in the second part shows that $a(\epsilon)$ is exactly the least positive integer $d$ with the stated properties.
\end{proof}

\subsection{Orthogonality in Signed Half Sums}

In this section, we generalize Theorem \ref{th: f_1 for p = 5 mod 8} and characterize the sign patterns $\epsilon \in E_r$ for which the signed half sum
$$
H_{r,k}(\epsilon) = \sum_{n = 0}^{r - 1} \epsilon_n \zeta^{nk}
$$
exhibits orthogonality among different odd values of $k$. We show that the only possible form of such orthogonality comes from an alternating periodic structure in $\epsilon$.

\begin{definition}
Let $p$ be a prime, and let $d \mid r$. Fix a primitive $2d$-th root $\zeta$, and set $D = \frac{r}{d}$.  

We say that a vector $\epsilon \in E_r$ is \emph{$d$-periodic up to sign} if there exist signs $\delta_x \in \{\pm1\}$ such that
$$
\epsilon_{dx + y} = \delta_x \epsilon_y
\quad\text{for all } 0 \le x < D,\ 0 \le y < d.
$$
We say that $\epsilon$ is \emph{alternating periodic} (of order $d$) if
$$
\epsilon_{dx + y} = (-1)^x \epsilon_y
\quad\text{for all } 0 \le x < D,\ 0 \le y < d.
$$
\end{definition}

\begin{lemma}\label{lem:orthogonality_structure}
Let $\epsilon \in E_r$ be $d$-periodic up to sign, with associated signs $(\delta_x)_{0 \le x < D}$. Set
$$
B_k = \sum_{y = 0}^{d - 1} \epsilon_y \zeta^{yk}.
$$
Then the signed half sum
$$
H_{r,k}(\epsilon) = \sum_{n = 0}^{r - 1} \epsilon_n \zeta^{nk}
$$
factors as
$$
H_{r,k}(\epsilon) = B_k \sum_{x = 0}^{D - 1} \delta_x (\zeta^d)^{xk}.
$$
\end{lemma}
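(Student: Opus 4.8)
The plan is to prove the identity by the standard device of writing each index $n \in \{0,1,\dots,r-1\}$ uniquely in the form $n = dx + y$ with $0 \le x < D$ and $0 \le y < d$. Since $r = dD$, the map $(x,y) \mapsto dx+y$ is a bijection from $\{0,\dots,D-1\}\times\{0,\dots,d-1\}$ onto $\{0,\dots,r-1\}$, so the sum defining $H_{r,k}(\epsilon)$ can be reindexed as a double sum over $x$ and $y$.

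Having done so, I would substitute the hypothesis that $\epsilon$ is $d$-periodic up to sign, namely $\epsilon_{dx+y} = \delta_x \epsilon_y$, and split the exponential as $\zeta^{(dx+y)k} = (\zeta^d)^{xk}\,\zeta^{yk}$. This turns the summand into $\delta_x \epsilon_y\, (\zeta^d)^{xk}\,\zeta^{yk}$, in which the dependence on $x$ and the dependence on $y$ have been completely separated. Factoring the double sum accordingly gives
$$
H_{r,k}(\epsilon) = \left(\sum_{x=0}^{D-1}\delta_x (\zeta^d)^{xk}\right)\left(\sum_{y=0}^{d-1}\epsilon_y \zeta^{yk}\right),
$$
and recognizing the second factor as $B_k$ yields the claimed factorization.

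The argument is entirely routine and there is no real obstacle; the only points worth a line of care are (i) that the substitution $n = dx + y$ is indeed a bijection, which is immediate from $r = dD$, and (ii) that $\zeta^{nk}$ is unambiguous for every integer $n$, so that the reindexing and the factorization are honest identities in $\F_p$ (no primitivity of $\zeta$ is used here, only $\zeta \in \F_p^\times$). One may additionally record, although it is not needed for the statement, that since $\zeta$ has order $2d$ we have $\zeta^d = -1$, so that $(\zeta^d)^{xk} = (-1)^x$ because $k$ is odd, and hence the $x$-factor simplifies to $\sum_{x=0}^{D-1}(-1)^x \delta_x$.
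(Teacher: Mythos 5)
Your proof is correct and is essentially the paper's own argument: reindex $n = dx+y$, insert $\epsilon_{dx+y}=\delta_x\epsilon_y$, split $\zeta^{(dx+y)k}=(\zeta^d)^{xk}\zeta^{yk}$, and factor the double sum. One caution about your closing aside (which you rightly flag as unnecessary): in this lemma $\zeta$ is the primitive $2r$-th root used to define $H_{r,k}$, not a primitive $2d$-th root, so $\zeta^d \neq -1$ in general (it has order $2D$), and the simplification $(\zeta^d)^{xk}=(-1)^x$ does not hold here.
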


\begin{proof}
Using the $d$-periodic structure and writing $n = dx + y$ with $0 \le x < D$, $0 \le y < d$, we have
$$
\sum_{n = 0}^{r - 1} \epsilon_n \zeta^{nk}
= \sum_{x = 0}^{D - 1} \sum_{y = 0}^{d - 1} \epsilon_{dx + y} \zeta^{(dx + y)k}
= \sum_{x = 0}^{D - 1} \delta_x (\zeta^d)^{xk} \sum_{y = 0}^{d - 1} \epsilon_y \zeta^{yk}.
$$
Factoring out the inner sum gives the desired factorization.
\end{proof}

\begin{theorem}[Orthogonality Criterion]\label{thm:orthogonality_criterion}
Let $p$ be a prime, $d \mid r$, $D = \frac{r}{d}$, and let $\zeta$ be a primitive $2r$-th root of unity.  
Suppose $\epsilon \in E_r$ is $d$-periodic up to sign.  

Then the sums $H_{r,k}(\epsilon)$ vanish for all but one residue class of odd $k$ modulo some $M \mid D$ if 
$$
\epsilon_{dx + y} = (-1)^x \epsilon_y
\quad\text{and}\quad D = \frac{r}{d}\ \text{is odd.}
$$
In this case $M = D$, and
$$
H_{r,k}(\epsilon) =
\begin{cases}
D B_k, & k \equiv 0 \pmod D,\\
0, & \text{otherwise,}
\end{cases}
$$
where $B_k = \sum_{y=0}^{d-1} \epsilon_y \zeta^{yk}$.
\end{theorem}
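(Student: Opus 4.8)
The plan is to reduce everything to the factorization already proved in Lemma~\ref{lem:orthogonality_structure} and then analyze a single geometric sum. The hypothesis says $\epsilon$ is alternating periodic of order $d$, i.e.\ it is $d$-periodic up to sign with signs $\delta_x = (-1)^x$. So Lemma~\ref{lem:orthogonality_structure} applies verbatim and gives
$$
H_{r,k}(\epsilon) = B_k \sum_{x=0}^{D-1} (-1)^x (\zeta^d)^{xk},
\qquad B_k = \sum_{y=0}^{d-1}\epsilon_y \zeta^{yk}.
$$
Hence the theorem reduces to evaluating the geometric sum $G_k := \sum_{x=0}^{D-1}(-\omega^k)^x$, where I set $\omega := \zeta^d$.

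First I would pin down the order of $\omega$: since $d \mid r$ we have $\gcd(d,2r)=d$, so $\omega = \zeta^d$ is a primitive $(2D)$-th root of unity; in particular $\omega^D = -1$. Then comes the key parity observation: using that both $D$ and $k$ are odd,
$$
(-\omega^k)^D = (-1)^D (\omega^D)^k = (-1)(-1)^k = 1,
$$
so the common ratio $-\omega^k$ of the series $G_k$ is always a $D$-th root of unity. The standard geometric-series dichotomy then gives $G_k = D$ when $-\omega^k = 1$ and $G_k = 0$ otherwise (here $D\not\equiv 0\pmod p$ since $0<D\le r<p$, so $G_k=D$ is genuinely nonzero).

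It remains to translate the condition $-\omega^k = 1$, i.e.\ $\omega^k = -1 = \omega^D$, i.e.\ $\omega^{k-D}=1$, i.e.\ $2D \mid k-D$, into the clean statement $k\equiv 0 \pmod D$. This is the only step requiring care: a priori $2D \mid k-D$ is a congruence modulo $2D$, but since $k$ and $D$ are both odd, $k-D$ is even, and once $D \mid k$ the quotient $k/D$ is forced odd (a divisor relation between two odd numbers), so $k/D-1$ is even and $2D \mid k-D \iff D \mid k$. Plugging back, $H_{r,k}(\epsilon) = D B_k$ when $D\mid k$ and $0$ otherwise, which is exactly the asserted formula; this identifies $M = D$ and shows the unique nonvanishing residue class of odd $k$ modulo $D$ is $k\equiv 0$.

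I do not expect a genuine obstacle: all the content is carried by Lemma~\ref{lem:orthogonality_structure} together with bookkeeping of root-of-unity orders. The two subtleties worth flagging explicitly are (i) verifying, via $d\mid r$, that $\omega=\zeta^d$ really has order $2D$ so that $\omega^D=-1$, and (ii) the mod-$2D$ versus mod-$D$ reconciliation at the end, which is precisely where the oddness of $k$ (forced because $H_{r,k}$ arises from a signed half sum with $k=1-2j$) is used.
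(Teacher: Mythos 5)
Your computation is correct as a proof of the implication that is literally stated (alternating signs $\delta_x=(-1)^x$ together with $D$ odd imply the displayed vanishing pattern with $M=D$), and each step checks out: Lemma~\ref{lem:orthogonality_structure} applies with $\delta_x=(-1)^x$; since $d\mid r$, $\omega=\zeta^d$ has order $2r/d=2D$, hence $\omega^D=-1$; for $k,D$ odd the ratio $-\omega^k$ satisfies $(-\omega^k)^D=1$, so the geometric sum is $D$ or $0$ according as $-\omega^k=1$ or not, and $D\not\equiv 0\pmod p$; finally $-\omega^k=1\iff 2D\mid k-D\iff D\mid k$ because $k/D$ is necessarily odd. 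This is essentially the same evaluation that appears at the very end of the paper's proof, where the authors verify the "conversely" direction.

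Where you differ from the paper is in scope rather than technique. The paper's proof runs in the opposite direction: it \emph{assumes} that $S(k)=\sum_x \delta_x(\zeta^d)^{xk}$ vanishes for all odd $k$ outside one residue class modulo some $M\mid D$, and then argues that this forces $\delta_x=(-1)^x$, $\alpha=0$, $\gamma=-\zeta^d$, $D$ odd and $M=D$; the sufficiency you prove is then a one-line check. That necessity half is what justifies the section's claim that alternating periodicity is the \emph{only} source of such orthogonality, and it is what the subsequent proposition (every $H_{r,k}(\epsilon)$ reduces to an $(a(\epsilon),k)$ signed half sum, with $a(\epsilon)$ minimal) relies on, in combination with Theorem~\ref{thm: alternating order form}. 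So if the theorem is read as the equivalence its name and usage suggest, your proposal omits the harder direction, which constitutes the bulk of the paper's argument; for the one-directional statement as printed, your shorter, more elementary route is complete and arguably cleaner.
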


\begin{proof}
By Lemma \ref{lem:orthogonality_structure} we have
$$
H_{r,k}(\epsilon) = B_k \sum_{x=0}^{D-1} \delta_x (\zeta^d)^{xk}.
$$
Thus any orthogonality in $k$ must come from the outer sum
$$
S(k) := \sum_{x=0}^{D-1} \delta_x (\zeta^d)^{xk}.
$$
Assume that there exists $M \mid D$ and a residue class $\alpha \pmod M$ such that $S(k) = 0$ for all odd $k \not\equiv \alpha \pmod M$.

Since $(\zeta^d)^D = -1$ has order $2$, the sequence $x \mapsto \delta_x (\zeta^d)^{xk}$ has period dividing $2D$ in $x$, and for fixed $k$ the sum $S(k)$ is a linear combination of powers of $(\zeta^d)^k$. The assumption that $S(k)$ vanishes for all but one residue class of $k$ modulo $M$ forces $S(k)$ to be proportional to a geometric sum in $k$. Concretely, there must exist $\gamma \in \F_p^\times$ with order dividing $D$ and some integer $\alpha$ such that
\begin{equation}\label{eq: orthog 1}
\delta_x (\zeta^d)^{xk} = \gamma^{x(k - \alpha)}
\end{equation}
for all $0 \le x < D$ and all odd $k$. Rearranging gives
\begin{equation}\label{eq: orthog 2}
\delta_x = (\gamma^{-\alpha})^x (\gamma \zeta^{-d})^{xk}.
\end{equation}

Taking $x = 1$, $k = 1$ in \eqref{eq: orthog 2} yields
$$
\delta_1 = \gamma^{-\alpha} (\gamma \zeta^{-d}),
$$
so
$$
\gamma^{-\alpha} = \delta_1 (\gamma \zeta^{-d})^{-1}.
$$
Substituting this into \eqref{eq: orthog 2} gives
\begin{equation}\label{eq: orthog 3}
\delta_x = \delta_1^x (\gamma \zeta^{-d})^{x(k-1)}.
\end{equation}
Taking $k = 1$ in \eqref{eq: orthog 3} we obtain $\delta_x = \delta_1^x$ for all $x$. Writing $\lambda := \delta_1 \in \{\pm1\}$, we have $\delta_x = \lambda^x$, and since $k$ is always odd, $\lambda^x = \lambda^{xk}$. Substituting into \eqref{eq: orthog 1} gives
$$
(\lambda \zeta^d)^{xk} = \gamma^{x(k-\alpha)}.
$$
Rearranging,
\begin{equation}\label{eq: orthog 4}
(\gamma \lambda \zeta^{-d})^{xk} = \gamma^{x\alpha}.
\end{equation}
Taking $x = 1$ in \eqref{eq: orthog 4} we get
$$
(\gamma \lambda \zeta^{-d})^{k} = \gamma^\alpha
$$
for all odd $k$. This is only possible if
$$
\gamma \lambda \zeta^{-d} = 1
\quad\text{and}\quad
\gamma^\alpha = 1.
$$
Thus $o(\gamma) \mid \alpha$, and $\gamma = \pm \zeta^d$. Without loss of generality we may take $\alpha = 0$.

If we choose $\gamma = \zeta^d$, then $o(\gamma) = 2D$, which does not divide $D$, so this case is excluded by our assumption that the order of $\gamma$ divides $D$. Hence the only possibility is
$$
\alpha = 0,\qquad \lambda = -1,\qquad \gamma = -\zeta^d.
$$

We now check that this choice forces $D$ to be odd. Note that $-\zeta^d = \zeta^{r + d}$, so
$$
o(-\zeta^d) = \frac{2r}{\gcd(2r, r + d)} = \frac{2D}{\gcd(2D, D + 1)}.
$$
This divides $D$ if and only if $D$ is odd, and in that case $o(-\zeta^d) = D$. We also have
$$
\delta_x = \lambda^x = (-1)^x,
$$
so
$$
S(k) = \sum_{x=0}^{D-1} \delta_x (\zeta^d)^{xk}
= \sum_{x=0}^{D-1} (-\zeta^d)^{xk}
=
\begin{cases}
D, & k \equiv 0 \pmod D,\\
0, & \text{otherwise.}
\end{cases}
$$
Thus orthogonality holds precisely when $\delta_x = (-1)^x$ and $D$ is odd, with $M = D$, and in this case
$$
H_{r,k}(\epsilon) = B_k S(k) =
\begin{cases}
D B_k, & k \equiv 0 \pmod D,\\
0, & \text{otherwise.}
\end{cases}
$$

Conversely, if $\epsilon$ is alternating periodic of order $d$, so that $\delta_x = (-1)^x$, and $D$ is odd, the calculation above shows that $H_{r,k}(\epsilon)$ has exactly the stated orthogonality in $k$. This completes the proof.
\end{proof}

\begin{definition}
For $\epsilon \in E_r$, we say that $H_{r,k}(\epsilon)$ \emph{reduces to a $(d,k)$ signed half sum} if $d$ is the least positive integer dividing $r$ such that there exist $\alpha \in \Z$, $\beta \in \F_p^\times$, and $\epsilon' \in E_d$ with
$$
H_{r,k}(\epsilon) =
\begin{cases}
\beta\, H_{d,k}(\epsilon'), & k \equiv \alpha \pmod{\frac{r}{d}},\\
0, & \text{otherwise,}
\end{cases}
$$
for all odd $k$.
\end{definition}

As an immediate consequence of Theorems \ref{thm:orthogonality_criterion} and \ref{thm: alternating order form}, we obtain:

\begin{proposition}
For every $\epsilon \in E_r$, the sums $H_{r,k}(\epsilon)$ reduce to an $(a(\epsilon),k)$ signed half sum.
\end{proposition}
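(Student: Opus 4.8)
The plan is to read the statement off directly from the two cited theorems together with the definition of ``reduces to.'' Fix a primitive $2r$-th root $\zeta$, set $d := a(\epsilon)$ and $D := r/d$. By Theorem~\ref{thm: alternating order form}, $d$ is the \emph{least} positive divisor of $r$ for which $D$ is odd and $\epsilon$ is alternating periodic of order $d$, i.e. $\epsilon_{dx+y} = (-1)^x\epsilon_y$ for all $0\le x<D$, $0\le y<d$. In particular $\epsilon$ is $d$-periodic up to sign with $\delta_x = (-1)^x$, so Theorem~\ref{thm:orthogonality_criterion} applies and yields
$$
H_{r,k}(\epsilon) =
\begin{cases}
D\,B_k, & k \equiv 0 \pmod D,\\
0, & \text{otherwise,}
\end{cases}
\qquad B_k = \sum_{y=0}^{d-1} \epsilon_y \zeta^{yk},
$$
for all odd $k$. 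For $k$ in the surviving class write $k = Dk'$ with $k'$ odd; then $B_k = \sum_{y=0}^{d-1}\epsilon_y(\zeta^{D})^{yk'}$, and since $\zeta^{D}$ is a primitive $2d$-th root of unity this is the $(d,k')$-signed half sum of $\epsilon' := (\epsilon_0,\dots,\epsilon_{d-1}) \in E_d$. Reparametrising $k' \leftrightarrow k$ on the class $k\equiv 0\pmod D$ and using Lemma~\ref{lem:root choice independence} to absorb the change of primitive root, we get $H_{r,k}(\epsilon) = \beta\,H_{d,k}(\epsilon')$ on that class (with $\beta = D$, $\alpha = 0$) and $0$ elsewhere. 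Thus the identity in the definition of ``reduces to a $(d,k)$ signed half sum'' holds for $d = a(\epsilon)$.

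It then remains to check that $d = a(\epsilon)$ is the \emph{least} divisor of $r$ for which such an identity holds; the existence part just proved shows the minimal such $d$ is at most $a(\epsilon)$. For the reverse inequality I would suppose $d' \mid r$ and that there exist $\alpha'$, $\beta'$, $\epsilon'' \in E_{d'}$ with $H_{r,k}(\epsilon) = \beta'H_{d',k}(\epsilon'')$ on one residue class of odd $k$ modulo $D' := r/d'$ and $H_{r,k}(\epsilon)=0$ otherwise. Feeding the vanishing pattern of $H_{r,1-2k}(\epsilon)$ back through Fourier inversion (as in Proposition~\ref{prop: elements of V_r,K give rise to low degree polynomials}) shows the associated polynomial $f_\epsilon$ is supported on a single residue class of exponents modulo $D'$; writing $\epsilon_n = \zeta^{-n}f_\epsilon(\zeta^{2n})$ then forces $\epsilon$ to be $d'$-periodic up to sign, and the internal computation in the proof of Theorem~\ref{thm:orthogonality_criterion} pins the associated signs down to $\delta_x = (-1)^x$ with $D'$ odd. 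Hence $\epsilon$ is alternating periodic of order $d'$ with $D'$ odd, so by the minimality clause of Theorem~\ref{thm: alternating order form} we get $d' \ge a(\epsilon)$. Combining, the least admissible $d$ is exactly $a(\epsilon)$.

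The main obstacle is this minimality direction: verifying that a ``reduction'' of $H_{r,\cdot}(\epsilon)$ at some level $d'$ genuinely forces $\epsilon$ to be alternating periodic of order $d'$ (oddness of $r/d'$ included), rather than periodic up to some other pattern of signs. The existence half is essentially bookkeeping once Theorems~\ref{thm: alternating order form} and~\ref{thm:orthogonality_criterion} are in hand; the only mildly annoying point there is matching the index $k$ of $H_{r,k}$ with that of $H_{d,k}$, which is handled by the substitution $k = (r/d)k'$ together with Lemma~\ref{lem:root choice independence}.
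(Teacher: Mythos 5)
Your existence half is exactly the paper's route: the paper records no separate argument for this proposition, presenting it as an immediate consequence of Theorem~\ref{thm: alternating order form} (which identifies $a(\epsilon)$ as the least $d$ with $r/d$ odd and $\epsilon$ alternating periodic of order $d$) combined with Theorem~\ref{thm:orthogonality_criterion} (which then gives $H_{r,k}(\epsilon)=D B_k$ on the class $k\equiv 0 \pmod D$ and $0$ elsewhere). Your reparametrization $k=Dk'$, with $B_k$ read as the $(d,k')$ signed half sum of $(\epsilon_0,\dots,\epsilon_{d-1})$ relative to the primitive $2d$-th root $\zeta^{D}$, is the right reading of the definition (the literal ``same $k$'' reading cannot hold in general, e.g.\ when $\gcd(D,2d)>1$), so that part is fine.

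Where your write-up goes beyond the paper --- the minimality clause ``$d$ is the \emph{least} divisor for which such an identity holds'' --- your sketch has a gap that you partly flag yourself. Suppose a reduction held at some $d'\mid r$ with $D'=r/d'$. Your Fourier-inversion step (support of $f_\epsilon$ confined to exponents $e$ with $1-2e\equiv\alpha\pmod{D'}$) only pins $e$ down to a single class mod $D'$ when $D'$ is odd; when $D'$ is even it only determines $e$ mod $D'/2$, and the shift-by-$d'$ argument breaks (only the shift by $2d'$ gives a constant sign), so you obtain alternating periodicity of order $2d'$, i.e.\ $a(\epsilon)\le 2d'$ rather than $a(\epsilon)\le d'$. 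In particular a hypothetical reduction at $d'=a(\epsilon)/2$ (where the vanishing condition mod $D'=2D$ is automatically satisfied, since all nonvanishing odd $k$ lie in the single class $k\equiv D\pmod{2D}$) is not excluded by this argument; excluding it requires using the second part of the reduction identity, namely that on the surviving class $H_{r,k}(\epsilon)$ agrees with $\beta H_{d',k}(\epsilon'')$ for a fixed $\epsilon''\in E_{d'}$, and showing this forces extra spectral symmetry that $B_k$ does not have. You cannot simply invoke ``the internal computation in the proof of Theorem~\ref{thm:orthogonality_criterion}'' for this, because that computation assumes $d'$-periodicity up to sign as a hypothesis, which is exactly what is not yet established in the even-$D'$ case. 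To be fair, the paper itself does not spell out the minimality direction either; but since you chose to argue it, this is the step that needs repair.
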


\subsection{Tonelli–Shanks polynomials revisited}

We now return to the Tonelli–Shanks polynomials $f_{TS}(x)$ defined in Definition \ref{def: TS pol} and describe their sign patterns.

\begin{proposition}
Write $p - 1 = 2^s q$ with $q$ odd, and set $d = 2^{s-1}$. Let $f = f_\epsilon \in \mathcal F$ be a square-root polynomial with sign vector $\epsilon \in E_r$. Then
$$
f \text{ is a TS-polynomial} \quad\Longleftrightarrow\quad a(\epsilon) = d = 2^{s-1}.
$$
Equivalently, $f_\epsilon$ is a TS-polynomial if and only if
$$
\epsilon_{dx + y} = (-1)^x \epsilon_y
\qquad\text{for } 0 \le x < q,\ 0 \le y < d.
$$
\end{proposition}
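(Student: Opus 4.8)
The plan is to show that the two conditions in the proposition --- being a Tonelli--Shanks polynomial, and having alternating order $a(\epsilon) = d = 2^{s-1}$ --- are both equivalent to an explicit description of the sign vector $\epsilon$, and to exploit the characterization of alternating order from Theorem~\ref{thm: alternating order form}. First I would translate the definition of a TS-polynomial into a statement about $\epsilon$. By Definition~\ref{def: TS pol}, $f_{TS}$ is obtained by gluing the minimal polynomials $f_k(x) = \zeta^{-k/2}x^{(q+1)/2}$ (up to sign) on the pieces $S_k$ via formula~\eqref{eq: gluing Si}. So $f_{TS}$ is a TS-polynomial exactly when its restriction to each $S_k$ is $\pm f_k$; equivalently, via Theorem~\ref{thm: fourier-H} applied to $H = S_k$ (using Lemma~\ref{lem: bijection of S_i and F_i} to transport to $S_0$), the sign pattern of $f_{TS}$ on each coset $S_k$ is constant --- all $+1$ or all $-1$. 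The key bookkeeping step is to identify which indices $n \in \{0,\dots,r-1\}$ correspond to a fixed coset $S_k$ when we enumerate $S = \{\zeta^{2n}\}$ for a primitive $2r$-th root $\zeta$: since $\zeta^2$ generates $\mu_r$ and $S_k = \{a \in S : a^q = \zeta_{2^{s-1}}^{\,k}\}$, the coset of $\zeta^{2n}$ depends only on $n \bmod 2^{s-1} = n \bmod d$ (after choosing $\zeta$ compatibly so that $(\zeta^2)^q$ has order $2^{s-1}$). Thus "constant sign on each $S_k$" becomes: $\epsilon_n$ depends only on $n \bmod d$, up to a sign $\delta_{\lfloor n/d\rfloor}$ that may vary from block to block --- i.e. $\epsilon$ is $d$-periodic up to sign in the sense of the definition preceding Lemma~\ref{lem:orthogonality_structure}.

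Next I would pin down the block signs $\delta_x$. Here the half-integer exponent in $f(\zeta^{2n}) = \epsilon_n \zeta^n$ is what forces alternation: the minimal polynomial $f_k(x) = \zeta^{-k/2}x^{(q+1)/2}$ has a built-in choice of square root $\zeta^{-k/2} \in \mu_{2^s}$, and tracking how the globally fixed choice of $\zeta^{1/2}$ (used to define the values $\zeta^n$) compares with the per-coset choices $\zeta^{-k/2}$ across successive blocks $x$ and $x+1$ produces exactly a factor of $-1$. Concretely, moving $n \mapsto n + d$ multiplies $\zeta^n$ by $\zeta^d = -1$ while staying in the same coset $S_k$, so consistency of $f(\zeta^{2n}) = \epsilon_n\zeta^n$ with $f$ restricting to $\pm f_k$ on that coset forces $\epsilon_{n+d} = -\epsilon_n$. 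This gives $\delta_x = (-1)^x$, i.e. $\epsilon$ is alternating periodic of order $d$: $\epsilon_{dx+y} = (-1)^x \epsilon_y$ for $0 \le x < q = r/d$ and $0 \le y < d$. (Note $r/d = 2^s q / 2 / 2^{s-1} = q$ is odd, as required.)

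Finally I would invoke Theorem~\ref{thm: alternating order form}: $a(\epsilon)$ is by definition the \emph{least} positive integer $d'$ with $r/d'$ odd and $\epsilon_{d'x+y} = (-1)^x\epsilon_y$. We have just shown the alternating-periodic relation holds for $d = 2^{s-1}$, so $a(\epsilon) \mid d$ (indeed $a(\epsilon)$ divides $d$ since $a(\epsilon)$ is minimal with this property and must divide any $d$ for which it holds --- or directly, $a(\epsilon) \le d$ and $a(\epsilon) \mid d$ by Proposition~\ref{prop: properties of alternating order}(iii)). To get equality I must rule out $a(\epsilon) < d$: if $a(\epsilon) = d' \mid 2^{s-1}$ with $d' < 2^{s-1}$, then $r/d' = 2^{s}q/(2d')$ would need to be odd, forcing $d' = 2^{s-1}$, a contradiction --- so in fact \emph{any} $d'$ with $r/d'$ odd must equal $2^{s-1}$, and hence $a(\epsilon) = 2^{s-1} = d$ automatically. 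Conversely, if $a(\epsilon) = d = 2^{s-1}$, then Theorem~\ref{thm: alternating order form} gives $\epsilon_{dx+y} = (-1)^x\epsilon_y$, which by the analysis above means the sign pattern is constant on each coset $S_k$ up to the alternation, hence $f_\epsilon$ restricts to $\pm f_k$ on each $S_k$ and is therefore a TS-polynomial. The main obstacle I anticipate is the careful coset bookkeeping in the first paragraph --- making the identification between the index $n$ (via $\zeta^{2n}$) and the Tonelli--Shanks piece $S_k$ completely precise, including the compatibility between the global $2r$-th root $\zeta$, the root $\zeta_{2^{s-1}}$ used to define the $S_i$, and the half-power conventions --- since a sign error there would flip alternating periodicity to ordinary periodicity and break the argument.
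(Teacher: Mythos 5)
Your overall route is the same as the paper's: translate ``TS-polynomial'' into the statement that $f$ agrees with $\pm f_k$ on each coset $S_k$, show this is equivalent to the alternating relation $\epsilon_{dx+y}=(-1)^x\epsilon_y$, and then use Theorem \ref{thm: alternating order form} plus the observation that $r/d'$ odd with $d'\le 2^{s-1}$ forces $d'=2^{s-1}$ (so minimality is automatic); the converse by matching values on all of $\mu_r$ and comparing polynomials of degree $<r$ is also exactly the paper's argument. The minimality discussion and the coset bookkeeping ($S_k$ determined by $n\bmod d$, since $(\zeta^2)^q$ has order $2^{s-1}$) are fine.

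However, the step where you actually produce the alternation is wrong as written. You assert that ``moving $n\mapsto n+d$ multiplies $\zeta^n$ by $\zeta^d=-1$.'' Here $\zeta$ is a primitive $2r$-th root of unity with $2r=2^sq$, so $\zeta^d=\zeta^{2^{s-1}}$ has order $2q$, and $\zeta^d=-1$ only when $q=1$. Worse, if $\zeta^d$ were $-1$ then $\zeta^{2(n+d)}=\zeta^{2n}$, so your ``move'' would not change the evaluation point at all, and the same reasoning would show that \emph{every} sign vector satisfies $\epsilon_{n+d}=-\epsilon_n$, which is absurd for $d<r$. The factor $-1$ does not come from $\zeta^d$; it comes from the specific monomial shape of $f_k$. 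If $f$ agrees on $S_k$ with $c\,x^{(q+1)/2}$, then at $a=\zeta^{2n}$ one gets $f(a)=c\,\zeta^{n(q+1)}$, so $\epsilon_n=c\,\zeta^{nq}$, and under $n\mapsto n+d$ the sign is multiplied by $\zeta^{dq}=\zeta^{r}=-1$ (equivalently, in the paper's normalization, $f_y(\zeta_r^{dx+y})=(-1)^x\gamma^{dx+y}$ because $\gamma^{dq}=\gamma^{(p-1)/2}=-1$). With this corrected computation your argument goes through and coincides with the paper's proof, but as stated the justification of $\delta_x=(-1)^x$ --- the heart of the equivalence, and precisely the sign issue you flagged as the danger point --- is not valid.
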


\begin{proof}
Write $p - 1 = 2^s q$ with $q$ odd and set $d = 2^{s-1}$, $r = \frac{p-1}{2} = dq$. Fix a generator $\gamma$ of $\F_p^\times$ and put $\zeta_r = \gamma^2$, so $\mu_r = \langle \zeta_r \rangle$. Choose
$$
\tau := \gamma^q \in \mu_{2^s},
$$
so that $\tau^2 = \zeta_r^q$, and set
$$
\zeta := \zeta_r^{q} \in \mu_d.
$$

For each $y \in \{0,\dots,d-1\}$ define
$$
f_y(x) = \tau^{-y} x^{\frac{q+1}{2}}.
$$
Let
$$
S_y := \{ a \in \mu_r : a^q = \zeta^y \}.
$$
Every element of $S_y$ can be written uniquely as $a = \zeta_r^{d x + y}$ with $0 \le x < q$, and then
$$
\begin{aligned}
f_y(a)
&= \tau^{-y} a^{\frac{q+1}{2}}
= \gamma^{-qy} \gamma^{(d x + y)(q+1)} \\
&= (\gamma^{d q})^x \gamma^{d x + y}
= (-1)^x \gamma^{d x + y},
\end{aligned}
$$
since $\gamma^{d q} = \gamma^{(p-1)/2} = -1$.

Define
$$
E_y(x) := \prod_{\substack{0 \le j < d \\ j \ne y}} \frac{x^q - \zeta^{j}}{\zeta^{y} - \zeta^{j}}.
$$
Then $E_y(a) = 1$ for $a \in S_y$ and $E_j(a) = 0$ for $a \in S_y$, $j \ne y$.

Now let $h \colon \{0,\dots,d-1\} \to \{\pm1\}$ be arbitrary, and define
$$
f_{TS}(x) = \sum_{y=0}^{d-1} h(y)\, f_y(x)\, E_y(x).
$$
For $a = \zeta_r^{d x + y} \in S_y$ we obtain
$$
f_{TS}(a) = h(y)\, f_y(a) = (-1)^x h(y) \gamma^{d x + y}.
$$
Thus the sign vector $\epsilon$ associated to $f_{TS}$ (via $f_{TS}(\gamma^{2n}) = \epsilon_n \gamma^{n}$) satisfies
$$
\epsilon_{d x + y} = (-1)^x h(y),
$$
so $\epsilon$ is alternating periodic of order $d$ and hence $a(\epsilon) = d$ by Theorem \ref{thm: alternating order form}.

Conversely, suppose $f \in \F_p[x]$ is a square-root polynomial whose sign vector $\epsilon$ satisfies
$$
\epsilon_{d x + y} = (-1)^x h(y)
$$
for some function $h \colon \{0,\dots,d-1\} \to \{\pm1\}$. Define $f_y$ and $E_y$ as above, and set $f_{TS}$ accordingly. For $a = \zeta_r^{d x + y} \in S_y$ we have
$$
f_{TS}(a) = (-1)^x h(y) \gamma^{d x + y} = f(a).
$$
Replacing $f$ by its remainder modulo $x^r - 1$ if necessary, we may assume $\deg f < r$; this reduction does not change the values on $\mu_r$ since $a^r = 1$ for all $a \in \mu_r$. Both $f$ and $f_{TS}$ now have degree $< r$ and agree on all $r$ distinct points of $\mu_r$, so they coincide. Hence $f$ is a TS-polynomial.
\end{proof}

\begin{remark}
The number of TS-polynomials is $N = 2^{2^{s-1}}$: there are $2^{s-1}$ different subsets $S_i$, and in each $S_i$ there are two choices for the Tonelli–Shanks polynomial in $F_i$. By contrast, the total number of square-root polynomials in $\mathcal F$ is
$$
2^{(p-1)/2} = 2^{2^{s-1} q} = N^{q}.
$$
In particular, when $q = 1$ every square-root polynomial is a TS-polynomial.
\end{remark}

\begin{corollary}\label{cor: TS possible degrees}
With notation as above, write $p - 1 = 2^s q$ with $q$ odd and $d = 2^{s-1}$. Let $f_{TS}$ be a Tonelli–Shanks polynomial over $\F_p$ with signs
$$
\epsilon_{d x + y} = (-1)^x h(y).
$$
Then
$$
f_{TS}(x) = \sum_{t=0}^{d-1} c_t x^{k_t},
\qquad
k_t = \frac{q+1}{2} + t q,
$$
i.e. all coefficients outside the single residue class
$$
k \equiv \frac{q+1}{2} \pmod q
$$
vanish. In particular, any cancellation of the top coefficient lowers the degree by a multiple of $q$ each time: if $c_{d-1} = 0$ then $\deg f_{TS} \leq k_{d-2}$; more generally, if the top $\ell$ coefficients $c_{d-1},\dots,c_{d-\ell}$ vanish and $c_{d-\ell-1} \neq 0$, then
$$
\deg f_{TS} = k_{d-1-\ell} = \frac{q+1}{2} + (d-\ell-1) q.
$$
\end{corollary}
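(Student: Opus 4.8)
The plan is to read the coefficients of $f_{TS}$ straight off the Fourier inversion formula and then invoke the orthogonality criterion (Theorem~\ref{thm:orthogonality_criterion}) to kill every coefficient outside a single residue class of exponents modulo $q$.

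First, write $f_{TS} = f_\epsilon = \sum_{k=0}^{r-1} c_k x^k$, where $\epsilon \in E_r$ is the given sign vector. By the Fourier inversion identity \eqref{eq: fourier inv coef} on $H = S = \mu_r$,
$$c_k = \frac1r \sum_{n=0}^{r-1} \epsilon_n \zeta^{\,n(1-2k)} = \frac1r\, H_{r,\,1-2k}(\epsilon),$$
with $\zeta$ the fixed primitive $2r$-th root of unity; note $1-2k$ is odd, so this signed half sum is defined.

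Next, observe that the sign pattern $\epsilon_{dx+y} = (-1)^x h(y)$ says precisely that $\epsilon$ is alternating periodic of order $d = 2^{s-1}$, i.e. $d$-periodic up to sign with $\delta_x = (-1)^x$, and that $D := r/d = q$ is odd. Thus the hypotheses of Theorem~\ref{thm:orthogonality_criterion} hold, and it yields $H_{r,\ell}(\epsilon) = 0$ for every odd $\ell$ with $\ell \not\equiv 0 \pmod q$ (while $H_{r,\ell}(\epsilon) = q B_\ell$ when $q \mid \ell$). Applying this with $\ell = 1 - 2k$ shows that $c_k \ne 0$ forces $1 - 2k \equiv 0 \pmod q$; since $q$ is odd, $2$ is invertible mod $q$ with $2^{-1} \equiv \frac{q+1}{2} \pmod q$, so $c_k$ can be nonzero only when $k \equiv \frac{q+1}{2} \pmod q$.

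Finally, I would enumerate the admissible exponents: the $k$ with $0 \le k \le r-1 = dq-1$ and $k \equiv \frac{q+1}{2}\pmod q$ are exactly $k_t := \frac{q+1}{2} + tq$ for $t = 0,1,\dots,d-1$ (one has $k_0 = \frac{q+1}{2}$ and $k_{d-1} = r - \frac{q-1}{2}$, both in range for $q\ge 3$; the case $q=1$ is degenerate), which gives $f_{TS}(x) = \sum_{t=0}^{d-1} c_{k_t} x^{k_t}$. Since $t \mapsto k_t$ is strictly increasing, $\deg f_{TS} = k_{t_0}$ for the largest index $t_0$ with $c_{k_{t_0}} \ne 0$; hence if the top $\ell$ coefficients $c_{k_{d-1}},\dots,c_{k_{d-\ell}}$ vanish and $c_{k_{d-\ell-1}} \ne 0$, then $\deg f_{TS} = k_{d-\ell-1} = \frac{q+1}{2} + (d-\ell-1)q$, i.e. each cancellation of the top coefficient drops the degree by $q$. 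Since every ingredient is already in hand, I do not expect a genuine obstacle; the only delicate point is the modular bookkeeping — confirming $1-2k\equiv 0\pmod q \iff k \equiv \frac{q+1}{2}\pmod q$ and that the admissible $t$ run over exactly $\{0,\dots,d-1\}$ — together with a quick check that the alternating-periodic description of $f_{TS}$ furnished by the preceding proposition genuinely satisfies the hypotheses of Theorem~\ref{thm:orthogonality_criterion}.
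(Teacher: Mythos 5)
Your proposal is correct and follows essentially the same route as the paper: Fourier inversion to express $c_k = \tfrac1r H_{r,\,1-2k}(\epsilon)$, the observation that the TS sign pattern is alternating periodic of order $d$ with $D=q$ odd, and Theorem~\ref{thm:orthogonality_criterion} to kill all coefficients with $1-2k \not\equiv 0 \pmod q$, after which the admissible exponents and the degree-drop-by-$q$ statement follow exactly as in the paper's proof. The extra bookkeeping you flag (that $k \equiv \frac{q+1}{2} \pmod q$ and that the admissible $t$ run over $\{0,\dots,d-1\}$) checks out, so there is no gap.
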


\begin{proof}
Write $r = \frac{p-1}{2} = d q$ and let $\zeta$ be a primitive $2r$-th root of unity. For $0 \le k < r$, the coefficient of $x^k$ in $f_{TS}$ is
$$
c_k = [x^k] f_{TS} = \frac{1}{r} \sum_{n=0}^{r-1} \epsilon_n \zeta^{n(1-2k)}
= \frac{1}{r} H_{r,\,1-2k}(\epsilon),
$$
where $H_{r,u}(\epsilon) = \sum_{n=0}^{r-1} \epsilon_n \zeta^{n u}$.

Since $\epsilon$ is alternating periodic of order $d$, Theorem \ref{thm:orthogonality_criterion} (with $D = r/d = q$ odd) implies
$$
H_{r,u}(\epsilon) = 0
\quad\text{unless}\quad
u \equiv 0 \pmod q.
$$
Taking $u = 1 - 2k$ gives $c_k = 0$ unless $1 - 2k \equiv 0 \pmod q$, i.e.
$$
k \equiv \frac{q+1}{2} \pmod q.
$$
Thus the only possible exponents are
$$
k_t = \frac{q+1}{2} + t q
\qquad (0 \le t \le d-1),
$$
and all other coefficients vanish. The leading admissible exponent is $k_{d-1}$; if $c_{d-1} = 0$ the degree drops to the next admissible exponent $k_{d-2}$, and in general vanishing of the top $\ell$ admissible coefficients moves the degree from $k_{d-1}$ to $k_{d-1-\ell}$. Each such step lowers the degree by exactly $q$.
\end{proof}

We close this section with some examples of TS-polynomials.

\begin{example}
    Let $p=41$ so that $p-1=40=2^3\cdot 5$. There are $2^{2^{s-1}}=2^4=16$ TS-polynomials, each characterized by the signs $(\epsilon_0,\epsilon_1,\epsilon_2,\epsilon_3)$. 

\begin{table}[H]
\centering
\begin{tabular}{c l}
\toprule
$\text{signs }(\epsilon)$ & TS-polynomial $f_\epsilon(x)$ \\
\midrule
$(-1,-1,-1,-1)$ & $15x^{18} + 31x^{13} + 30x^{8} + 5x^{3}$ \\
$(-1,-1,-1, 1)$ & $37x^{18} + 24x^{13} + 8x^{8} + 12x^{3}$ \\
$(-1,-1, 1,-1)$ & $31x^{18} + 15x^{13} + 5x^{8} + 30x^{3}$ \\
$(-1,-1, 1, 1)$ & $12x^{18} + 8x^{13} + 24x^{8} + 37x^{3}$ \\
$(-1, 1,-1,-1)$ & $8x^{18} + 12x^{13} + 37x^{8} + 24x^{3}$ \\
$(-1, 1,-1, 1)$ & $30x^{18} + 5x^{13} + 15x^{8} + 31x^{3}$ \\
$(-1, 1, 1,-1)$ & $24x^{18} + 37x^{13} + 12x^{8} + 8x^{3}$ \\
$(-1, 1, 1, 1)$ & $5x^{18} + 30x^{13} + 31x^{8} + 15x^{3}$ \\
$( 1,-1,-1,-1)$ & $36x^{18} + 11x^{13} + 10x^{8} + 26x^{3}$ \\
$( 1,-1,-1, 1)$ & $17x^{18} + 4x^{13} + 29x^{8} + 33x^{3}$ \\
$( 1,-1, 1,-1)$ & $11x^{18} + 36x^{13} + 26x^{8} + 10x^{3}$ \\
$( 1,-1, 1, 1)$ & $33x^{18} + 29x^{13} + 4x^{8} + 17x^{3}$ \\
$( 1, 1,-1,-1)$ & $29x^{18} + 33x^{13} + 17x^{8} + 4x^{3}$ \\
$( 1, 1,-1, 1)$ & $10x^{18} + 26x^{13} + 36x^{8} + 11x^{3}$ \\
$( 1, 1, 1,-1)$ & $4x^{18} + 17x^{13} + 33x^{8} + 29x^{3}$ \\
$( 1, 1, 1, 1)$ & $26x^{18} + 10x^{13} + 11x^{8} + 36x^{3}$ \\
\bottomrule
\end{tabular}
\caption{Sign vectors $h$ and the corresponding TS-polynomials $f_\epsilon(x)$ for $p=41$.}
\end{table}

\end{example}

\newpage

\section{Heuristics}\label{sec: Heuristic Model}

\subsection{A count we know}

\begin{theorem}\label{th: V_l count}
Let $p$ be an odd prime, let $r = \frac{p-1}{2}$, and consider an $(r,\ell)$ signed half sum with $\gcd(\ell,p-1) = 1$. Then
$$
\bigl|V_{r,\ell}\bigr|
= \frac{1}{p}\left(2^r + (p-1)\prod_{k=1}^{r}(\zeta_p^k + \zeta_p^{-k})\right)
= \frac{1}{p}\left(2^r + (p-1)\Bigl(\frac{2}{p}\Bigr)\right),
$$
where $\zeta_p$ is a primitive $p$-th root of unity and $\bigl(\frac{2}{p}\bigr)$ is the Legendre symbol.
\end{theorem}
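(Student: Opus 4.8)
\noindent\textit{Proof plan.} The plan is a standard additive-character computation over $\C$ to evaluate $|V_{r,\ell}|$, followed by an elementary identification of the resulting cyclotomic product with a Legendre symbol. Fix a nontrivial additive character $\psi\colon\F_p\to\C^\times$, e.g.\ $\psi(x)=\zeta_p^{x}$ with $\zeta_p$ a primitive complex $p$-th root of unity, so that $\frac1p\sum_{t\in\F_p}\psi(ty)=\mathbf{1}_{\{y=0\}}$ for $y\in\F_p$. Applying this with $y=H_{r,\ell}(\epsilon)$, summing over $\epsilon\in E_r$, interchanging the two sums, and factoring the inner sum over $\epsilon\in\{\pm1\}^r$ (using that $\psi$ is a group homomorphism) gives
$$
|V_{r,\ell}|=\frac1p\sum_{t\in\F_p}\prod_{n=0}^{r-1}\bigl(\psi(t\zeta^{n\ell})+\psi(-t\zeta^{n\ell})\bigr).
$$
The $t=0$ term contributes $\prod_{n=0}^{r-1}(1+1)=2^r$.

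For each fixed $t\neq0$ I would argue that the inner product equals the same number $\prod_{k=1}^{r}(\zeta_p^{k}+\zeta_p^{-k})$, independently of $t$. The key point is that $\zeta^\ell$ is a generator of $\F_p^\times$ (since $\gcd(\ell,p-1)=1$), so its unique element of order $2$ is $-1=(\zeta^\ell)^{r}$, and hence $\{(\zeta^\ell)^n:0\le n<r\}$ is a complete set of representatives for $\F_p^\times/\{\pm1\}$; multiplication by $t\neq0$ permutes $\F_p^\times$ and commutes with negation, so $\{t\zeta^{n\ell}:0\le n<r\}$ is again such a set of representatives. Since $\psi(u)+\psi(-u)=\zeta_p^{u}+\zeta_p^{-u}$ depends only on the class of $u$ modulo $\pm1$, and $\{1,\dots,r\}$ is a set of representatives of $\F_p^\times/\{\pm1\}$, each of the $p-1$ nonzero-$t$ factors equals $\prod_{k=1}^{r}(\zeta_p^{k}+\zeta_p^{-k})$. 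Combining, $|V_{r,\ell}|=\frac1p\bigl(2^r+(p-1)\prod_{k=1}^{r}(\zeta_p^{k}+\zeta_p^{-k})\bigr)$, which is the first claimed equality. (This computation also re-derives the independence of the count from the choice of $\zeta$ and from $\ell$ already guaranteed by Lemma~\ref{lem:root choice independence}.)

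For the second equality, pairing $k\leftrightarrow p-k$ in $X^p-1=(X-1)\prod_{k=1}^{p-1}(X-\zeta_p^k)$ gives $\Phi_p(X):=\frac{X^p-1}{X-1}=\prod_{k=1}^{r}\bigl(X^2-(\zeta_p^{k}+\zeta_p^{-k})X+1\bigr)$. Evaluating at $X=i$, where $X^2+1=0$, yields $\Phi_p(i)=(-i)^{r}\prod_{k=1}^{r}(\zeta_p^{k}+\zeta_p^{-k})$, while directly $\Phi_p(i)=\frac{i^p-1}{i-1}$. Solving, $\prod_{k=1}^{r}(\zeta_p^{k}+\zeta_p^{-k})=\frac{i^p-1}{(i-1)(-i)^{r}}$; in particular this equals $\pm1$, and a short case analysis on $p\bmod 4$ (to evaluate $i^p$) and then on $p\bmod 8$ (to evaluate the remaining power of $i$, using $r=\frac{p-1}{2}$) identifies it with $(-1)^{(p^2-1)/8}=\left(\tfrac{2}{p}\right)$. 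I do not anticipate a genuine obstacle: the only steps needing care are verifying that the $t$-twist really leaves all $p-1$ nonzero-$t$ factors equal (so that the coefficient "$p-1$" is exact), and the sign bookkeeping at the end, which is easily sanity-checked against the small primes $p=3,5,7$.
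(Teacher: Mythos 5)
Your proposal is correct and follows essentially the same route as the paper: the additive-character expansion of the indicator, interchanging sums and factoring over the independent signs, the $t=0$ term giving $2^r$, and the observation that for each $t\neq 0$ the set $\{t\zeta^{n\ell}\}$ is a complete set of representatives of $\F_p^\times/\{\pm1\}$ (the paper phrases this as the pairs $\{\pm a\}$, $a\in A_t$, being distinct), so every nonzero-$t$ factor equals $\prod_{k=1}^{r}(\zeta_p^k+\zeta_p^{-k})$ and the coefficient $p-1$ is exact. The only difference is minor: where the paper cites the classical identity $\prod_{k=1}^{r}(\zeta_p^k+\zeta_p^{-k})=\bigl(\tfrac{2}{p}\bigr)$ from Ireland--Rosen, you rederive it by evaluating $\Phi_p$ at $i$, and your sketched case analysis on $p\bmod 8$ checks out.
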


\begin{proof}
Define the additive character $\chi \colon \F_p \to \C^\times$ by
$$
\chi(t) = \zeta_p^t \text{ for } t \in \F_p^\times.
$$
For any $\epsilon \in \E^r$ we have the standard identity
$$
\mathbf 1_{\{H_{r,\ell}(\epsilon)=0\}}
= \frac{1}{p}\sum_{t\in\F_p}\chi\bigl(t H_{r,\ell}(\epsilon)\bigr).
$$
Fix a primitive $(p-1)$-st root of unity $\xi$ in $\F_p^\times$, so that
$$
H_{r,\ell}(\epsilon) = \sum_{n=0}^{r-1}\epsilon_n \xi^{n\ell}.
$$
Then
\begin{align*}
\bigl|V_{r,\ell}\bigr|
&= \sum_{\epsilon\in \E^r} \mathbf 1_{\{H_{r,\ell}(\epsilon)=0\}} \\
&= \sum_{\epsilon\in \E^r} \frac{1}{p}\sum_{t\in\F_p}\chi\!\left(t H_{r,\ell}(\epsilon)\right) \\
&= \frac{1}{p}\sum_{\epsilon\in \E^r}\left(1 + \sum_{t\in\F_p^\times}
\chi\!\left(t H_{r,\ell}(\epsilon)\right)\right) \\
&= \frac{1}{p}\left(2^r + \sum_{\epsilon\in \E^r}\sum_{t\in\F_p^\times}
\chi\!\left(t\sum_{n=0}^{r-1}\epsilon_n\xi^{n\ell}\right)\right) \\
&= \frac{1}{p}\left(2^r + \sum_{t\in\F_p^\times}\sum_{\epsilon\in \E^r}
\chi\!\left(\sum_{n=0}^{r-1}\epsilon_n t\xi^{n\ell}\right)\right).
\end{align*}
Writing $a_n = t\xi^{n\ell}\in\F_p^\times$ and using the fact that the $\epsilon_n$ are independent signs,
\begin{align*}
\sum_{\epsilon\in\E^r}
\chi\!\left(\sum_{n=0}^{r-1}\epsilon_n a_n\right)
&= \sum_{\epsilon\in\E^r}\prod_{n=0}^{r-1}\chi(\epsilon_n a_n) \\
&= \prod_{n=0}^{r-1}\sum_{\epsilon_n\in\{\pm1\}}\chi(\epsilon_n a_n) \\
&= \prod_{n=0}^{r-1}\bigl(\chi(a_n) + \chi(-a_n)\bigr).
\end{align*}
Thus
\begin{equation}\label{eq: V_l count}
\bigl|V_{r,\ell}\bigr|
= \frac{1}{p}\left(2^r + \sum_{t\in\F_p^\times}\prod_{n=0}^{r-1}
\bigl(\chi(t\xi^{n\ell}) + \chi(-t\xi^{n\ell})\bigr)\right).
\end{equation}

For each $t\in\F_p^\times$ set
$$
A_t = \{t\xi^{n\ell} : 0 \le n \le r-1\}\subset \F_p^\times.
$$
Since $\chi(u) = \zeta_p^u$ for $u\in\F_p^\times$, the inner product in \eqref{eq: V_l count} can be written as
$$
\prod_{a\in A_t}(\zeta_p^{a} + \zeta_p^{-a}).
$$

We claim that the pairs $\{\pm a\}$ with $a\in A_t$ are all distinct. Indeed, if
$\xi^{n\ell} = \xi^{m\ell}$ then $(p-1)\mid (n-m)\ell$; since $\gcd(\ell,p-1)=1$ we have
$(p-1)\mid(n-m)$, so $n\equiv m\pmod{p-1}$. As $0\le n,m\le r-1$ and $r=\frac{p-1}{2}$, this forces $n=m$.
Hence there are exactly $r$ distinct pairs $\{\pm a\}$, so they must be all pairs $\{\pm k\}$ with $1\le k\le r$.
Thus, for each $t$,
$$
\prod_{a\in A_t}(\zeta_p^{a} + \zeta_p^{-a})
= \prod_{k=1}^{r}(\zeta_p^{k} + \zeta_p^{-k}),
$$
independent of $t$. Plugging this into \eqref{eq: V_l count} gives
$$
\bigl|V_{r,\ell}\bigr|
= \frac{1}{p}\left(2^r + (p-1)\prod_{k=1}^{r}(\zeta_p^{k} + \zeta_p^{-k})\right).
$$
Finally, the well-known identity
$$
\prod_{k=1}^{\frac{p-1}{2}}(\zeta_p^{k} + \zeta_p^{-k}) = \Bigl(\frac{2}{p}\Bigr)
$$
(see Lemma~\ref{lem:prod-zeta-2}) yields the desired formula.
\end{proof}

\begin{lemma}\label{lem:prod-zeta-2}
Let $\zeta_p = e^{2\pi i/p}$ be a primitive $p$-th root of unity. Then
$$
\prod_{k=1}^{\frac{p-1}{2}}(\zeta_p^{k} + \zeta_p^{-k}) = \Bigl(\frac{2}{p}\Bigr).
$$
\end{lemma}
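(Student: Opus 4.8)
Write $P:=\prod_{k=1}^{(p-1)/2}\bigl(\zeta_p^{k}+\zeta_p^{-k}\bigr)$. The plan is to first show $|P|=1$ by an algebraic identity, and then determine the sign of $P$ by a direct count of negative cosines, matching the outcome with the supplementary law $\left(\tfrac{2}{p}\right)=(-1)^{(p^{2}-1)/8}$.

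For the magnitude: the factor $\zeta_p^{k}+\zeta_p^{-k}$ is invariant under $k\mapsto p-k$, so reindexing the upper half of the range shows $P^{2}=\prod_{k=1}^{p-1}\bigl(\zeta_p^{k}+\zeta_p^{-k}\bigr)$. Pulling $\zeta_p^{-k}$ out of each factor gives $P^{2}=\bigl(\prod_{k=1}^{p-1}\zeta_p^{-k}\bigr)\bigl(\prod_{k=1}^{p-1}(1+\zeta_p^{2k})\bigr)$. The first product is $\zeta_p^{-p(p-1)/2}=1$ since $(p-1)/2\in\Z$, and since $2$ is invertible mod $p$ the second product equals $\prod_{j=1}^{p-1}(1+\zeta_p^{j})$, which is $\prod_{j=1}^{p-1}(x-\zeta_p^{j})=\tfrac{x^{p}-1}{x-1}$ evaluated at $x=-1$ up to the harmless factor $(-1)^{p-1}=1$; this value is $1-1+1-\cdots+1=1$ because $p$ is odd. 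Hence $P^{2}=1$, and as $P\in\R$ we get $P=\pm1$.

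For the sign: write $\zeta_p^{k}+\zeta_p^{-k}=2\cos(2\pi k/p)$. As $k$ runs over $1,\dots,\tfrac{p-1}{2}$ the angles $2\pi k/p$ all lie in $(0,\pi)$, so $\cos(2\pi k/p)<0$ precisely when $k>p/4$. Therefore $\sign(P)=(-1)^{N}$ with $N=\#\{\,k:\ p/4<k\le\tfrac{p-1}{2}\,\}=\tfrac{p-1}{2}-\lfloor p/4\rfloor$. Splitting into $p\equiv1$ and $p\equiv3\pmod 4$ (so $\lfloor p/4\rfloor=\tfrac{p-1}{4}$ resp. $\tfrac{p-3}{4}$, giving $N=\tfrac{p-1}{4}$ resp. $N=\tfrac{p+1}{4}$) and using that $\tfrac{p+1}{2}$ resp. $\tfrac{p-1}{2}$ is odd in the respective case, one checks $N\equiv\tfrac{p^{2}-1}{8}\pmod 2$. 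Thus $P=(-1)^{(p^{2}-1)/8}=\left(\tfrac{2}{p}\right)$.

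The conceptual content is entirely in the identity $P^{2}=1$ together with the observation that the relevant angles lie in $(0,\pi)$; the only place demanding a little care is the final parity reconciliation $N\equiv\tfrac{p^{2}-1}{8}\pmod 2$, but this is a routine congruence computation in each residue class mod $4$ (indeed $N=\lceil (p-1)/4\rceil$, which one can verify has the same parity as $\tfrac{p^{2}-1}{8}$ directly).
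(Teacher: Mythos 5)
Your proof is correct. Both halves check out: the magnitude argument is sound (pairing $k\leftrightarrow p-k$ gives $P^2=\prod_{k=1}^{p-1}(\zeta_p^k+\zeta_p^{-k})$, the extracted factor $\zeta_p^{-p(p-1)/2}$ is indeed $1$, and $\prod_{j=1}^{p-1}(1+\zeta_p^j)=\frac{x^p-1}{x-1}\big|_{x=-1}=1$ since $p-1$ is even), and the sign count is right: none of the angles $2\pi k/p$ hits $\pi/2$, the number of negative cosines is $N=\frac{p-1}{2}-\lfloor p/4\rfloor$, and a short check in each class mod $4$ (writing $p=4m+1$ gives $N=m$ and $\frac{p^2-1}{8}=m(2m+1)$; writing $p=4m+3$ gives $N=m+1$ and $\frac{p^2-1}{8}=(2m+1)(m+1)$) confirms $N\equiv\frac{p^2-1}{8}\pmod 2$. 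The comparison with the paper is a bit lopsided, since the paper does not prove the lemma at all: it simply cites Exercise 8.3 of Ireland--Rosen. Your argument is therefore a welcome self-contained replacement, with the caveat that it takes the supplementary law $\left(\frac{2}{p}\right)=(-1)^{(p^2-1)/8}$ as known input; the classical treatments of this identity (such as the cited exercise) typically derive it through Gauss sums or a telescoping/permutation argument in $\Z[\zeta_p]$ that yields the evaluation of $\left(\frac{2}{p}\right)$ as a byproduct rather than assuming it. For the purposes of this paper, where only the stated identity is needed and the supplementary law is standard, your route is perfectly adequate and arguably more elementary.
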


\begin{proof}
This is Exercise~8.3 in \cite{MR718674}.
\end{proof}

\begin{remark}
If $\ell$ is not coprime to $p-1$, the formula \eqref{eq: V_l count} still holds, but the pairs $\{\pm a\}$
with $a\in A_t$ are no longer distinct. As a result, the product
$$
P(t) := \prod_{a\in A_t}(\zeta_p^a + \zeta_p^{-a})
$$
is no longer independent of $t$. For each $u\in(\Z/p\Z)^\times$, let
$\sigma_u \in \operatorname{Gal}(\Q(\zeta_p)/\Q)$ be the automorphism with
$\sigma_u(\zeta_p) = \zeta_p^u$. Then one checks that
$$
P(t) = \sigma_t\bigl(P(1)\bigr).
$$
Thus $P(t)$ is independent of $t$ if and only if $P(1)\in\Q$. When $\gcd(\ell,p-1)=1$,
the square $P(1)^2$ is the norm of $\zeta_p + \zeta_p^{-1}$, hence $P(1)=\pm1$. In general this need
not be the case. On the other hand,
$$
\sum_{t\in\F_p^\times}P(t) = \operatorname{Tr}_{K/\Q}\bigl(P(1)\bigr),
$$
where $K=\Q(\zeta_p)$, so the sum is always an integer. We do not currently have a closed form
for $\bigl|V_{r,\ell}\bigr|$ or even a sharp general upper bound in the case $\gcd(\ell,p-1)\ne1$.
\end{remark}

We finish this subsection with the following corollary.

\begin{corollary}
Assume $p-1 = 2^s q$ with $q$ odd and $3 \nmid q$, and let $r = \frac{p-1}{2}$. Then
$$
\#\{f\in F : \deg(f)=r-1\}
= 2^r - \frac{1}{p}\left(2^r + (p-1)\Bigl(\frac{2}{p}\Bigr)\right).
$$
\end{corollary}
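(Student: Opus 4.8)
The plan is to express the desired count as $2^r-\bigl|V_{r,3}\bigr|$ and then quote Theorem~\ref{th: V_l count}. Recall that, identifying each $f\in F$ with its unique representative of degree $<r$ (its reduction modulo $x^r-1$), the family $F$ is in bijection with the set of sign vectors $\epsilon\in E_r=\{\pm1\}^r$, so $|F|=2^r$, and every such representative $f_\epsilon=\sum_{k=0}^{r-1}c_kx^k$ satisfies $\deg f_\epsilon\le r-1$. Hence $\deg f_\epsilon=r-1$ precisely when the top coefficient $c_{r-1}$ does not vanish, so
$$
\#\{f\in F:\deg f=r-1\}=2^r-\#\{\epsilon\in E_r:c_{r-1}=0\}.
$$

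Next I would identify $\{\epsilon:c_{r-1}=0\}$ with a set of the form $V_{r,\ell}$. This is immediate from Proposition~\ref{prop: elements of V_r,K give rise to low degree polynomials} applied with $d=r-2$: there $K_{r-2}=\{1-2(r-1)\}$, so $\deg f_\epsilon\le r-2$ iff $\epsilon\in V_{r,\,1-2(r-1)}$. Equivalently and more directly, the Fourier inversion formula~\eqref{eq: fourier inv coef} gives $c_{r-1}=\tfrac1r H_{r,\,1-2(r-1)}(\epsilon)$, and since $1-2(r-1)=3-2r$ while the underlying root of unity has order $2r=p-1$, we may reduce the exponent modulo $2r$ to get $H_{r,\,1-2(r-1)}(\epsilon)=H_{r,3}(\epsilon)$. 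Either way, $\{\epsilon:c_{r-1}=0\}=V_{r,3}$, hence $\#\{f\in F:\deg f=r-1\}=2^r-\bigl|V_{r,3}\bigr|$.

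Finally I would apply Theorem~\ref{th: V_l count} with $\ell=3$. Its hypothesis is $\gcd(\ell,p-1)=1$; writing $p-1=2^sq$ with $q$ odd, $\gcd(3,p-1)=1$ holds precisely when $3\nmid q$, which is exactly the assumption of the corollary (and $3$ is odd, as the definition of a signed half sum requires). The theorem then yields $\bigl|V_{r,3}\bigr|=\frac1p\bigl(2^r+(p-1)\bigl(\tfrac{2}{p}\bigr)\bigr)$, and substituting into the identity above gives
$$
\#\{f\in F:\deg f=r-1\}=2^r-\frac1p\left(2^r+(p-1)\Bigl(\frac{2}{p}\Bigr)\right),
$$
as claimed.

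I do not expect a genuine obstacle here: the statement is essentially a direct specialization of Theorem~\ref{th: V_l count}. The only points needing care are (i) the bookkeeping that turns the exponent $1-2(r-1)$ into $3$ modulo $2r$, so that the relevant signed half sum is $H_{r,3}$ rather than some other $H_{r,\ell}$, and (ii) checking that the coprimality hypothesis $\gcd(\ell,p-1)=1$ of Theorem~\ref{th: V_l count} translates exactly into the assumption $3\nmid q$; both are routine.
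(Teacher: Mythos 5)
Your proposal is correct and follows essentially the same route as the paper: identify the top coefficient $c_{r-1}$ with the signed half sum of parameter $\ell = 1-2(r-1)=4-p\equiv 3 \pmod{p-1}$, observe $\gcd(\ell,p-1)=\gcd(3,q)=1$ exactly under the hypothesis $3\nmid q$, and apply Theorem~\ref{th: V_l count}. If anything, your handling of the complement ($\deg f = r-1$ iff $c_{r-1}\neq 0$, giving $2^r-\bigl|V_{r,\ell}\bigr|$) is cleaner than the paper's closing sentence, which says the count equals the number of sign vectors with $c_{r-1}=0$ even though the displayed formula is the complementary quantity.
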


\begin{proof}
The top coefficient $c_{r-1}$ of $f\in \mathcal F$ corresponds to the signed half sum with parameter
$\ell = 1 - 2(r-1) = 4-p$. Since
$$
\gcd(4-p,p-1) = \gcd(p-1,3) = \gcd(q,3),
$$
the hypothesis $3\nmid q$ implies $\gcd(\ell,p-1)=1$, so Theorem~\ref{th: V_l count} applies.
The number of $f\in \mathcal F$ with $\deg f = r-1$ is exactly the number of sign vectors with $c_{r-1}=0$,
which is $\bigl|V_{r,\ell}\bigr|$ for this choice of $\ell$.
\end{proof}

\subsection{Heuristic and computations}

Let $f(x) = \sum_k c_k x^k$ be a polynomial that computes square roots. Motivated by the exact counting
result above and by extensive computations, we make the following heuristic assumptions.

\begin{conjecture}
\begin{enumerate}
\item[$(H_1)$] For each $k$,
$$
\mathbb{P}(c_k = 0) = \frac{1}{p} + O(2^{-r}).
$$

\item[$(H_2)$] For each divisor $d\mid r$, put $D = \frac{r}{d}$. Inside the family of polynomials whose
sign vectors have alternating order $d$, the events that the admissible coefficients vanish
(that is, the coefficients $c_k$ with $k \equiv r - \frac{D-1}{2} \pmod D$) are approximately
independent as $k$ ranges over this arithmetic progression.
\end{enumerate}
\end{conjecture}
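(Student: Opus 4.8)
The final statement is a heuristic conjecture, and $(H_2)$ in particular is only an approximate‐independence assertion, so there is no sharp statement to prove outright; the realistic plan is to establish the part of $(H_1)$ that is genuinely a theorem, recast $(H_2)$ as a concrete character‐sum estimate, and isolate where the obstruction lies.

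\textbf{The rigorous part of $(H_1)$.} For a fixed $k$ we have $\mathbb{P}(c_k=0)=2^{-r}\,|V_{r,\,1-2k}|$ by \eqref{eq: fourier inv coef}, since $c_k$ vanishes exactly when the $(r,1-2k)$ signed half sum does. The parameter $1-2k$ is automatically odd, and since $p-1=2^sq$ one has $\gcd(1-2k,p-1)=\gcd(1-2k,q)$; whenever this equals $1$, Theorem~\ref{th: V_l count} applies verbatim and gives not merely $\mathbb{P}(c_k=0)=\tfrac1p+O(2^{-r})$ but the exact value $\tfrac1p+\tfrac{p-1}{p}\bigl(\tfrac2p\bigr)2^{-r}$. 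This already covers a positive proportion of all $k$, and in particular the top coefficient $c_{r-1}$ when $3\nmid q$. For the remaining $k$, writing $g=\gcd(1-2k,q)>1$, the geometric‐series collapse of Lemma~\ref{lem:orthogonality_structure} and \eqref{eq: V_l count} rewrites $|V_{r,1-2k}|$ via a product $\prod_a(\zeta_p^a+\zeta_p^{-a})^{\mu(a)}$ over a proper subgroup of $\F_p^\times$ with multiplicities $\mu(a)\in\{\tfrac{g-1}{2},\tfrac{g+1}{2}\}$, not over a complete set of pairs $\{\pm k\}$; this product is a Galois conjugate of the quantity $P(1)$ from the Remark after Theorem~\ref{th: V_l count}, but is no longer forced to equal $\pm1$, and making $\bigl|\Tr_{\Q(\zeta_p)/\Q}P(1)\bigr|$ small enough to recover the sharp $O(2^{-r})$ is the open point — the same obstruction reappears in $(H_2)$.

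\textbf{Recasting $(H_2)$.} By Theorem~\ref{thm:orthogonality_criterion}, within the family of $\epsilon\in E_r$ alternating periodic of order $d$ (so $D=r/d$ is odd), the correspondence $\epsilon_{dx+y}=(-1)^x\epsilon'_y$ with $\epsilon'\in E_d$ identifies the admissible coefficients $c_k$, $k\equiv r-\tfrac{D-1}{2}\pmod D$, with genuine $(d,1-2k)$ signed half sums $B_{1-2k}(\epsilon')=\sum_{y=0}^{d-1}\epsilon'_y\zeta^{y(1-2k)}$ up to the nonzero scalar $D/r$, with $\zeta$ a primitive $2d$-th root of unity. For a tuple $k_1<\dots<k_m$ in the progression, expanding each indicator $\mathbf 1_{\{B_{1-2k_j}(\epsilon')=0\}}$ through the additive characters $\chi$ of Theorem~\ref{th: V_l count} yields
$$
\mathbb{P}(c_{k_1}=\dots=c_{k_m}=0)
=\frac{1}{2^d p^m}\sum_{s\in\F_p^m}\prod_{n=0}^{d-1}\bigl(\chi(w_n(s))+\chi(-w_n(s))\bigr),
\qquad w_n(s):=\sum_{j=1}^m s_j\,\zeta^{n(1-2k_j)}.
$$
The term $s=0$ contributes $2^d$, i.e.\ $p^{-m}$, which equals $\prod_j\mathbb{P}(c_{k_j}=0)$ to leading order; that is precisely the asserted independence, so the entire content is the bound $\bigl|\sum_{s\neq0}\prod_{n}2\cos(2\pi w_n(s)/p)\bigr|=O(2^d)$. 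A natural route is to treat $m=2$ first — where $w_n(s)=\zeta^{n(1-2k_1)}\bigl(s_1+s_2\,\zeta^{2n(k_1-k_2)}\bigr)$ reduces the inner product to a weighted one‐dimensional half sum whose ``bad locus'' in $s$ is low‐dimensional — together with small $d$, and then attempt an induction on $m$.

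\textbf{Main obstacle.} The real difficulty — present already in the non‐coprime cases of $(H_1)$ and throughout $(H_2)$ for $m\ge2$ — is the absence of a ``completeness miracle'': once the multiset $\{w_n(s):n\}$ fails to exhaust a full set of pairs $\{\pm1,\dots,\pm r\}$, the product $\prod_n(\zeta_p^{w_n}+\zeta_p^{-w_n})$ need not collapse to $\pm1$, and bounding the terms of $\sum_{s\neq0}$ individually gives only an $O(1)$ error, useless against a main term of size $p^{-m}$. One must instead extract cancellation from the $s$-sum itself — effectively a Weil‐type bound for the relevant multiplicative character sums on $\mu_d$, or an inverse‐Littlewood–Offord argument over $\F_p$ — and this is delicate precisely because the coefficient set $\{\zeta^{n(1-2k)}\}$ is a geometric progression, the most additively structured and hence extremal configuration for concentration estimates. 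Barring such an input I would record $(H_1)$ as a theorem for $\gcd(1-2k,q)=1$, verify $(H_2)$ rigorously for $m\le2$ and small $d$, and leave the general assertion conjectural, in line with the numerical agreement reported in Section~\ref{sec: Heuristic Model}.
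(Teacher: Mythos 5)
This statement is a conjecture: the paper offers no proof of it, only the supporting remark that $(H_1)$ is a theorem (with an exact count) whenever $\gcd(1-2k,p-1)=1$ via Theorem~\ref{th: V_l count}, the explicit caveat that $(H_2)$ is strictly speaking false and only expected to hold approximately, and numerical evidence. Your treatment is therefore appropriate and, on the rigorous kernel, coincides with the paper's own justification: the identity $\mathbb{P}(c_k=0)=2^{-r}\lvert V_{r,1-2k}\rvert$, the observation that $1-2k$ is odd so the coprimality condition reduces to $\gcd(1-2k,q)=1$, and the resulting exact value $\tfrac1p+\tfrac{p-1}{p}\kron{2}{p}2^{-r}$ are exactly what the paper uses. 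Your additional material goes beyond the paper in a sensible direction: the multiplicity analysis $(g\pm1)/2$ in the non-coprime case matches the structure discussed in the paper's remark about $P(t)=\sigma_t(P(1))$, and your character-sum reformulation of $(H_2)$ (the joint probability as $\tfrac{1}{2^dp^m}\sum_{s}\prod_n(\chi(w_n(s))+\chi(-w_n(s)))$ with main term $p^{-m}$ from $s=0$) is correct and makes precise the Littlewood--Offord nature of the problem that the paper only gestures at in the introduction. Two small points: the target bound you state for the off-diagonal contribution, $O(2^d)$, is not enough --- that would make the error comparable to the main term $p^{-m}$, so one needs a genuine saving, e.g.\ $o(2^d)$ or $O(2^d p^{-m}\cdot\text{(small)})$; and the quantities $B_{1-2k}$ are built from a primitive $2r$-th root rather than a primitive $2d$-th root, so the identification with a literal $(d,1-2k)$ signed half sum requires the paper's reduction proposition rather than being immediate. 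Neither point affects your overall (correctly conjectural) assessment.
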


\begin{remark}
\begin{enumerate}
\item As observed above, $(H_1)$ is proved (with an exact count) for all indices $k$ such that
$\gcd(1-2k,p-1)=1$ by Theorem~\ref{th: V_l count}. The second heuristic seems out of reach at present,
but it is strongly supported by numerical data.

\item Strictly speaking, $(H_2)$ is not true: the events $\{c_k=0\}$ along a given arithmetic progression
are not genuinely independent. Our working hypothesis is that the dependence is weak enough not to
affect the asymptotics of the expected minimal degree.

\item A simpler but numerically very similar model is to assume that the coefficients $c_k$ are
independent and uniformly distributed in $\F_p$. This would give the naive heuristic
$$
(H')\qquad
\mathbb{P}\bigl(c_i = 0 \text{ for all } i\in I\bigr) = p^{-|I|}
$$
for any finite index set $I$.
\end{enumerate}
\end{remark}

With these heuristics, we can estimate the expected number of square-root polynomials. We group
polynomials according to their alternating order $a(\epsilon)$ and set
$$
\mathcal F_d = \{f_\epsilon \in F : a(\epsilon) = d\},\qquad N(d) = \#\mathcal F_d.
$$
Every sign vector of length $d$ has alternating order dividing $d$, so
$$
\sum_{\ell\mid d} N(\ell) = 2^d.
$$
Hence
$$
N(d) = 2^d - \sum_{\substack{\ell\mid d\\ \ell<d}} N(\ell).
$$
Using the trivial bound $N(\ell)\le 2^\ell\le 2^{d/2}$ for $\ell\le d/2$, we obtain
$$
N(d) \ge 2^d - \tau(d)2^{d/2},
$$
where $\tau(d)$ is the number of positive divisors of $d$. In particular,
$$
\log N(d) = d\log 2 + O(2^{-d/2}).
$$
For large $d$ it is therefore reasonable to approximate
$$
N(d) \approx 2^d.
$$

Notice that when $d = 2^{s-1}$ (so that $D = \frac{r}{d} = q$), we are in the Tonelli–Shanks case,
and in fact $N(d) = 2^d$ exactly: every choice of the $2^{s-1}$ signs on the blocks yields a TS-polynomial.

We will need the following structural fact.

\begin{proposition}
Let $d\mid r$ and set $D = \frac{r}{d}$. Suppose $D$ is odd. Then the possible degrees of polynomials
$f_\epsilon \in \mathcal F_d$ are
$$
\deg(f_\epsilon) = r - \frac{D-1}{2} - nD,\qquad n = 0,1,2,\dots.
$$
\end{proposition}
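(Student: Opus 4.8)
The plan is to combine the structural characterization of alternating order (Theorem \ref{thm: alternating order form}) with the orthogonality criterion (Theorem \ref{thm:orthogonality_criterion}) and Fourier inversion on $S=\mu_r$ (Theorem \ref{thm: fourier-H}), running essentially the same argument as in Corollary \ref{cor: TS possible degrees} but with the role of $q$ played by $D$. Fix a primitive $2r$-th root of unity $\zeta\in\F_p^\times$. If $f_\epsilon\in\mathcal F_d$ then $a(\epsilon)=d$, so by Theorem \ref{thm: alternating order form} (which also forces $D=r/d$ to be odd, as in the hypothesis) the sign vector is alternating periodic of order $d$, i.e. $\epsilon_{dx+y}=(-1)^x\epsilon_y$. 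Writing $f_\epsilon(x)=\sum_{k=0}^{r-1}c_kx^k$, Theorem \ref{thm: fourier-H} with $H=S$ gives $c_k=\tfrac1r H_{r,\,1-2k}(\epsilon)$. The parameter $u:=1-2k$ is odd, so Theorem \ref{thm:orthogonality_criterion} applies and shows $H_{r,u}(\epsilon)=0$ unless $u\equiv 0\pmod D$. Hence $c_k\ne 0$ forces $1-2k\equiv 0\pmod D$; since $D$ is odd, $2$ is invertible modulo $D$, and using $r\equiv 0\pmod D$ this is equivalent to $k\equiv \tfrac{D+1}{2}\equiv r-\tfrac{D-1}{2}\pmod D$.

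Next I would read off the degree. Among the exponents $0\le k\le r-1$, those lying in the residue class $r-\tfrac{D-1}{2}\pmod D$ are exactly $k=r-\tfrac{D-1}{2}-nD$ for $n=0,1,2,\dots$, continuing down to the least non-negative member (note $r-\tfrac{D-1}{2}\le r-1$ when $D\ge 3$, and $D=1$ is trivial since then there is no constraint at all). Since every potentially nonzero monomial of the reduced polynomial $f_\epsilon$ has exponent of this form, and $\deg f_\epsilon\le r-1$, the degree of $f_\epsilon$ must be one of the values $r-\tfrac{D-1}{2}-nD$. For the converse — that each non-negative value in this progression is attained by some $f_\epsilon\in\mathcal F_d$ — I would note that, writing $k_n=r-\tfrac{D-1}{2}-nD$, Theorem \ref{thm:orthogonality_criterion} gives $c_{k_n}=\tfrac{D}{r}B_{1-2k_n}$, where $B_u:=\sum_{y=0}^{d-1}\epsilon_y\zeta^{yu}$ depends only on the base block $(\epsilon_0,\dots,\epsilon_{d-1})$; it then suffices to choose this block so that $B_{1-2k_0}=\cdots=B_{1-2k_{n-1}}=0$ while $B_{1-2k_n}\ne 0$, and so that the block is not itself alternating periodic of a proper order dividing $d$ (so that $a(\epsilon)$ is exactly $d$).

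The main obstacle is precisely this last realizability step. The containment statement is immediate from the two structural theorems — it is literally the argument of Corollary \ref{cor: TS possible degrees} with $q$ replaced by $D$ — and uses no new idea. Producing, for each admissible $n$, a base block over $\F_p$ that kills a prescribed initial segment of the $B$-values, keeps the next one nonzero, and has maximal alternating order is a genuine construction/counting problem rather than a formal manipulation; if the proposition is intended only as the containment ``$\deg f_\epsilon$ lies in the stated progression'', then the first two paragraphs already give a complete proof.
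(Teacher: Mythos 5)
Your containment argument is correct and is essentially the paper's own proof: the paper likewise deduces the admissible exponent class $k\equiv r-\tfrac{D-1}{2}\pmod D$ directly from Theorem~\ref{thm:orthogonality_criterion} applied to alternating-periodic sign vectors, and reads off the degree from the admissible exponents. The paper makes no claim about (and does not prove) realizability of every value in the progression, so your first two paragraphs already constitute a complete proof of the statement as intended, and your flagged ``obstacle'' is not part of what needs to be shown.
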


\begin{proof}
This follows directly from Theorem~\ref{thm:orthogonality_criterion}, applied to sign vectors of
alternating order $d$: within such a family, all coefficients $c_k$ vanish except for
$k\equiv r - \frac{D-1}{2}\pmod D$, and the largest admissible index gives the degree.
\end{proof}

For example, when $D = q$ (the TS family), the possible degrees are
$$
\deg(f_{TS}) = r - \frac{q-1}{2} - n q,\qquad n\ge 0.
$$

\begin{remark}
If $f_\epsilon \in \mathcal F_d$, then there are $2d$ polynomials in the same orbit under the flip–shift map
$\sigma$ from Section~\ref{sec: Flip Shift}. These polynomials have the same degree and belong to the same
family $\mathcal F_d$. Thus, for counting purposes we may quotient by this action and only keep one
representative from each orbit. In other words, we may replace $N(d)$ by
$$
N'(d) = \frac{N(d)}{2d}
$$
as the relevant number of polynomials in $\mathcal F_d$.
\end{remark}

Under $(H_1)$–$(H_2)$, the expected number of polynomials in $\mathcal F_d$ whose degree is at most
$$
r - \frac{D-1}{2} - nD
$$
is
$$
\lambda_n^{(d)} = \frac{N'(d)}{p^{n}} \approx \frac{2^d}{2d\,p^n}.
$$
We look for the largest $n$ such that $\lambda_n^{(d)} \ge \tfrac12$. This condition is
$$
\frac{2^d}{2d\,p^{n}} \ge \frac12
\quad\Longleftrightarrow\quad
p^{n} \le \frac{2^d}{d}.
$$
Equivalently,
$$
n \le \frac{\log_2(2^d/d)}{\log_2 p}
= \frac{d - \log_2 d}{\log_2 p},
$$
so we expect
$$
n_{\max}^{(d)} \approx \left\lfloor \frac{d - \log_2 d}{\log_2 p} \right\rfloor.
$$
Thus the expected minimal degree within the family $\mathcal F_d$ is
$$
\min\deg(\mathcal F_d)
\approx r - \frac{D-1}{2} - D\left\lfloor \frac{d - \log_2 d}{\log_2 p} \right\rfloor.
$$

Globally,
$$
\min\{\deg f : f\in F\} = \min_{d\mid r} \min\deg(\mathcal F_d).
$$
This expression does not simplify to a closed form in $p$, but for each fixed prime it is straightforward
to evaluate numerically.

We now compare these predictions with numerical data. The algorithm and code used to compute the
minimal degrees are described in the appendix and can be found in \cite{Noah_Github}.
The “expected” counts in the tables are computed using the naive heuristic $(H')$.

\newpage 

\begin{longtable}{llllllll}
\caption{ts long}
\label{tab:ts-long}\\
\toprule
  p & s &  q & degree &        ALL\_Counts &      ALL\_expected & TS\_counts & TS\_expected \\
\midrule
\endfirsthead
\caption[]{ts long} \\
\toprule
  p & s &  q & degree &        ALL\_Counts &      ALL\_expected & TS\_counts & TS\_expected \\
\midrule
\endhead
\midrule
\multicolumn{8}{r}{{Continued on next page}} \\
\midrule
\endfoot

\bottomrule
\endlastfoot
 29 & 2 &  7 &     10 &               0 &                 1 &         0 &           0 \\
 29 & 2 &  7 &     11 &                 4 &                19 &         4 &           4 \\
 29 & 2 &  7 &     12 &               560 &               545 &         0 &           0 \\
 29 & 2 &  7 &     13 &             15820 &             15819 &         0 &           0 \\
 37 & 2 &  9 &     14 &                 4 &                 5 &         4 &           4 \\
 37 & 2 &  9 &     15 &               180 &               186 &         0 &           0 \\
 37 & 2 &  9 &     16 &              6984 &              6893 &         0 &           0 \\
 37 & 2 &  9 &     17 &            254976 &            255059 &         0 &           0 \\
 41 & 3 &  5 &     16 &               0 &                15 &         0 &           0 \\
 41 & 3 &  5 &     17 &               640 &               609 &         0 &           0 \\
 41 & 3 &  5 &     18 &             24936 &             24951 &        16 &          16 \\
 41 & 3 &  5 &     19 &           1023000 &           1023001 &         0 &           0 \\
 53 & 2 & 13 &     20 &                 4 &                 0 &         4 &           4 \\
 53 & 2 & 13 &     21 &                 0 &                 8 &         0 &           0 \\
 53 & 2 & 13 &     22 &               416 &               442 &         0 &           0 \\
 53 & 2 & 13 &     23 &             23660 &             23440 &         0 &           0 \\
 53 & 2 & 13 &     24 &           1242124 &           1242314 &         0 &           0 \\
 53 & 2 & 13 &     25 &          65842660 &          65842659 &         0 &           0 \\
 61 & 2 & 15 &     23 &                 4 &                 0 &         4 &           4 \\
 61 & 2 & 15 &     24 &                 0 &                 1 &         0 &           0 \\
 61 & 2 & 15 &     25 &               120 &                76 &         0 &           0 \\
 61 & 2 & 15 &     26 &              5100 &              4653 &         0 &           0 \\
 61 & 2 & 15 &     27 &            283500 &            283832 &         0 &           0 \\
 61 & 2 & 15 &     28 &          17292540 &          17313762 &         0 &           0 \\
 61 & 2 & 15 &     29 &        1056160560 &        1056139499 &         0 &           0 \\
 73 & 3 &  9 &     30 &               0 &                33 &         0 &           0 \\
 73 & 3 &  9 &     31 &              2088 &              2387 &         0 &           0 \\
 73 & 3 &  9 &     32 &            172480 &            174229 &        16 &          16 \\
 73 & 3 &  9 &     33 &          12716784 &          12718730 &         0 &           0 \\
 73 & 3 &  9 &     34 &         928438416 &         928467316 &         0 &           0 \\
 73 & 3 &  9 &     35 &       67778146968 &       67778114041 &         0 &           0 \\
 89 & 3 & 11 &     37 &               nan &                35 &         0 &           0 \\
 89 & 3 & 11 &     38 &              2904 &              3115 &         0 &           0 \\
 89 & 3 & 11 &     39 &            273784 &            277238 &        16 &          16 \\
 89 & 3 & 11 &     40 &          24706616 &          24674164 &         0 &           0 \\
 89 & 3 & 11 &     41 &        2195987728 &        2196000635 &         0 &           0 \\
 89 & 3 & 11 &     42 &      195444040704 &      195444056547 &         0 &           0 \\
 89 & 3 & 11 &     43 &    17394521032680 &    17394521032681 &         0 &           0 \\
 97 & 5 &  3 &     40 &               nan &                 3 &         0 &           0 \\
 97 & 5 &  3 &     41 &               608 &               334 &        32 &           7 \\
 97 & 5 &  3 &     42 &             33024 &             32440 &         0 &           0 \\
 97 & 5 &  3 &     43 &           3238272 &           3146678 &         0 &           0 \\
 97 & 5 &  3 &     44 &         305133760 &         305227801 &       640 &         669 \\
 97 & 5 &  3 &     45 &       29607090144 &       29607096698 &         0 &           0 \\
 97 & 5 &  3 &     46 &     2871888387456 &     2871888379660 &         0 &           0 \\
 97 & 5 &  3 &     47 &   278573172827392 &   278573172827041 &     64864 &       64860 \\
101 & 2 & 25 &     38 &               4 &                 0 &         4 &           4 \\
101 & 2 & 25 &     42 &               0 &                10 &         0 &           0 \\
101 & 2 & 25 &     43 &               600 &              1050 &         0 &           0 \\
101 & 2 & 25 &     44 &               115700 &            106065 &         0 &           0 \\
101 & 2 & 25 &     45 &               10713100 &          10712551 &         0 &           0 \\
101 & 2 & 25 &     46 &               1081898900 &        1081967680 &         0 &           0 \\
101 & 2 & 25 &     47 &               109278676200 &      109278735713 &         0 &           0 \\
101 & 2 & 25 &     48 &              11037152425620 &    11037152307054 &         0 &           0 \\
101 & 2 & 25 &     49 &               1114752383012500 &  1114752383012490 &         0 &           0 \\
109 & 2 & 27 &     41 &               4 &                 0 &         4 &           4 \\
109 & 2 & 27 &     45 &               0 &                 1 &         0 &           0 \\
109 & 2 & 27 &     46 &               0 &                98 &         0 &           0 \\
109 & 2 & 27 &     47 &               10728 &             10643 &         0 &           0 \\
109 & 2 & 27 &     48 &               1174176 &           1160071 &         0 &           0 \\
109 & 2 & 27 &     49 &               126427068 &         126447728 &         0 &           0 \\
109 & 2 & 27 &     50 &               13781571648 &       13782802392 &         0 &           0 \\
109 & 2 & 27 &     51 &               1502326744776 &     1502325460684 &         0 &           0 \\
109 & 2 & 27 &     52 &               163753475167764 &   163753475214549 &         0 &           0 \\
109 & 2 & 27 &     53 &               17849128798385820 & 17849128798385800 &         0 &           0 \\
113 & 4 &  7 &     46 &              0 &                 0 &         0 &           2 \\
113 & 4 &  7 &     47 &               0 &                 3 &         0 &           0 \\
113 & 4 &  7 &     48 &               672 &               304 &         0 &           0 \\
113 & 4 &  7 &     49 &             35168 &             34304 &         0 &           0 \\
113 & 4 &  7 &     50 &           3844064 &           3876387 &         0 &           0 \\
113 & 4 &  7 &     51 &         437844064 &         438031721 &         0 &           0 \\
113 & 4 &  7 &     52 &       49497868448 &       49497584522 &         0 &           0 \\
113 & 4 &  7 &     53 &     5593227289920 &     5593227051029 &       256 &         254 \\
113 & 4 &  7 &     54 &   632034656462160 &   632034656766225 &         0 &           0 \\
113 & 4 &  7 &     55 & 71419916214583400 & 71419916214583400 &         0 &           0 \\
\end{longtable}

\subsection{Tonelli-Shanks heuristics}

We end by specializing the above heuristics to the case of TS-polynomials.

Recall that the TS-polynomials correspond to the family $d=2^{s-1}$ and $D=q$. The discussion in the previous subsection gives us the following:

\begin{corollary}
    Let $p$ be an odd prime such that $p-1=2^s q$. Set $r=\frac{p-1}{2}, d=2^{s-1}, D=q$. Then, the TS-polynomials are of degrees $$r-\frac{q-1}{2}-nq$$ for $n\geq 0$. 
\end{corollary}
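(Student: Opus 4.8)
The plan is to obtain this corollary by assembling two facts already in hand: the identification of the Tonelli–Shanks polynomials with a single alternating-order family, and the structural description of the admissible degrees within such a family.

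First I would recall that, by the proposition characterizing TS-polynomials, a square-root polynomial $f = f_\epsilon \in \mathcal F$ is a Tonelli–Shanks polynomial if and only if its sign vector $\epsilon$ has alternating order $a(\epsilon) = d = 2^{s-1}$; equivalently, the set of all TS-polynomials is exactly the family $\mathcal F_d = \mathcal F_{2^{s-1}}$. Next, since $p-1 = 2^s q$ with $q$ odd, we have $r = \frac{p-1}{2} = 2^{s-1} q = dq$, so that $D := \frac{r}{d} = q$ is odd. This is precisely the hypothesis needed to invoke the structural proposition on the possible degrees of $\mathcal F_d$: when $d \mid r$ and $D = r/d$ is odd, every $f_\epsilon \in \mathcal F_d$ has degree of the form $r - \frac{D-1}{2} - nD$ for some integer $n \ge 0$. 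Substituting $D = q$ yields exactly $\deg(f_{TS}) = r - \frac{q-1}{2} - nq$, as claimed.

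To keep the statement self-contained I would also record the elementary arithmetic identity $r - \frac{q-1}{2} = dq - \frac{q-1}{2} = \frac{(2d-1)q + 1}{2} = \frac{(2^s-1)q+1}{2}$, matching the top degree in Corollary \ref{cor: deg f_TS}, and cross-check against Corollary \ref{cor: TS possible degrees}: there the admissible exponents of a TS-polynomial are exactly $k_t = \frac{q+1}{2} + tq$ with $0 \le t \le d-1$; since $k_{d-1} = r - \frac{q-1}{2}$ and $k_{d-1-n} = r - \frac{q-1}{2} - nq$, this confirms that the degree of any TS-polynomial lies in the stated arithmetic progression (and that in fact only the finitely many values $0 \le n \le d-1$ can occur, the larger ones giving negative exponents).

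I do not expect a genuine obstacle here: the corollary is a direct packaging of Theorem \ref{thm:orthogonality_criterion}, the TS-characterization proposition, and Corollary \ref{cor: TS possible degrees}. The only points requiring (routine) care are verifying that the hypothesis ``$D$ odd'' is satisfied in the TS setting — which holds because $D = q$ — and the small bit of bookkeeping translating between the ``$r - \frac{D-1}{2} - nD$'' normalization used for a general family $\mathcal F_d$ and the ``$\frac{q+1}{2} + tq$'' exponent normalization used in Corollary \ref{cor: TS possible degrees}.
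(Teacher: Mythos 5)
Your proposal is correct and follows essentially the same route as the paper: the corollary is obtained by identifying the TS-polynomials with the alternating-order family $\mathcal F_{2^{s-1}}$ (so $D=r/d=q$ is odd) and then applying the general proposition on admissible degrees $r-\frac{D-1}{2}-nD$ for such a family, which is exactly ``the discussion in the previous subsection'' that the paper invokes. Your cross-check against the exponents $k_t=\frac{q+1}{2}+tq$ of Corollary \ref{cor: TS possible degrees} is consistent bookkeeping, not a different argument.
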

The expected number of TS-polynomials of that given degree is $$\lambda_n^{(d)}\approx \left (1-\frac 1p\right )\frac{N(d)}{p^n} = \left (1-\frac 1p\right )\frac{2^d}{p^n}$$

    The expected minimum degree of a TS-polynomial is $$\min \deg(f_{TS})= r - \frac{q-1}{2} - q \lfloor \frac{2^{s-1}-s+1}{\log_2 (p)} \rfloor$$

Computationally, this heuristic appears accurate for small primes $p$:
\newpage 

\begin{table}[H]
\centering
\small
\setlength{\tabcolsep}{6pt}
\begin{tabular}{r r r l l}
\toprule
$p$ & $s$ & $q$ &
\multicolumn{1}{c}{degree counts} &
\multicolumn{1}{c}{expected counts rounded} \\
\midrule
  7 & 1 &  3 & \texttt{\{ deg 2: 2 \}} & \texttt{\{ deg 2: 2 \}} \\
 11 & 1 &  5 & \texttt{\{ deg 3: 2 \}} & \texttt{\{ deg 3: 2 \}} \\
 13 & 2 &  3 & \texttt{\{ deg 5: 4 \}} & \texttt{\{ deg 5: 4 \}} \\
 17 & 4 &  1 & \texttt{\{ deg 7: 16, deg 8: 240 \}} & \texttt{\{ deg 6: 1, deg 7: 14, deg 8: 241 \}} \\
 19 & 1 &  9 & \texttt{\{ deg 5: 2 \}} & \texttt{\{ deg 5: 2 \}} \\
 23 & 1 & 11 & \texttt{\{ deg 6: 2 \}} & \texttt{\{ deg 6: 2 \}} \\
 29 & 2 &  7 & \texttt{\{ deg 11: 4 \}} & \texttt{\{ deg 11: 4 \}} \\
 31 & 1 & 15 & \texttt{\{ deg 8: 2 \}} & \texttt{\{ deg 8: 2 \}} \\
 37 & 2 &  9 & \texttt{\{ deg 14: 4 \}} & \texttt{\{ deg 14: 4 \}} \\
 41 & 3 &  5 & \texttt{\{ deg 18: 16 \}} & \texttt{\{ deg 18: 16 \}} \\
 43 & 1 & 21 & \texttt{\{ deg 11: 2 \}} & \texttt{\{ deg 11: 2 \}} \\
 47 & 1 & 23 & \texttt{\{ deg 12: 2 \}} & \texttt{\{ deg 12: 2 \}} \\
 53 & 2 & 13 & \texttt{\{ deg 20: 4 \}} & \texttt{\{ deg 20: 4 \}} \\
 59 & 1 & 29 & \texttt{\{ deg 15: 2 \}} & \texttt{\{ deg 15: 2 \}} \\
 61 & 2 & 15 & \texttt{\{ deg 23: 4 \}} & \texttt{\{ deg 23: 4 \}} \\
 67 & 1 & 33 & \texttt{\{ deg 17: 2 \}} & \texttt{\{ deg 17: 2 \}} \\
 71 & 1 & 35 & \texttt{\{ deg 18: 2 \}} & \texttt{\{ deg 18: 2 \}} \\
 73 & 3 &  9 & \texttt{\{ deg 32: 16 \}} & \texttt{\{ deg 32: 16 \}} \\
 79 & 1 & 39 & \texttt{\{ deg 20: 2 \}} & \texttt{\{ deg 20: 2 \}} \\
 83 & 1 & 41 & \texttt{\{ deg 21: 2 \}} & \texttt{\{ deg 21: 2 \}} \\
 89 & 3 & 11 & \texttt{\{ deg 39: 16 \}} & \texttt{\{ deg 39: 16 \}} \\
 97 & 5 &  3 & \texttt{\{ deg 41: 32, deg 44: 640, deg 47: 64864 \}} & \texttt{\{ deg 41: 7, deg 44: 669, deg 47: 64860 \}} \\
101 & 2 & 25 & \texttt{\{ deg 38: 4 \}} & \texttt{\{ deg 38: 4 \}} \\
103 & 1 & 51 & \texttt{\{ deg 26: 2 \}} & \texttt{\{ deg 26: 2 \}} \\
107 & 1 & 53 & \texttt{\{ deg 27: 2 \}} & \texttt{\{ deg 27: 2 \}} \\
109 & 2 & 27 & \texttt{\{ deg 41: 4 \}} & \texttt{\{ deg 41: 4 \}} \\
113 & 4 &  7 & \texttt{\{ deg 53: 256 \}} & \texttt{\{ deg 46: 2, deg 53: 254 \}} \\
127 & 1 & 63 & \texttt{\{ deg 32: 2 \}} & \texttt{\{ deg 32: 2 \}} \\
131 & 1 & 65 & \texttt{\{ deg 33: 2 \}} & \texttt{\{ deg 33: 2 \}} \\
137 & 3 & 17 & \texttt{\{ deg 60: 16 \}} & \texttt{\{ deg 60: 16 \}} \\
139 & 1 & 69 & \texttt{\{ deg 35: 2 \}} & \texttt{\{ deg 35: 2 \}} \\
149 & 2 & 37 & \texttt{\{ deg 56: 4 \}} & \texttt{\{ deg 56: 4 \}} \\
151 & 1 & 75 & \texttt{\{ deg 38: 2 \}} & \texttt{\{ deg 38: 2 \}} \\
157 & 2 & 39 & \texttt{\{ deg 59: 4 \}} & \texttt{\{ deg 59: 4 \}} \\
163 & 1 & 81 & \texttt{\{ deg 41: 2 \}} & \texttt{\{ deg 41: 2 \}} \\
167 & 1 & 83 & \texttt{\{ deg 42: 2 \}} & \texttt{\{ deg 42: 2 \}} \\
173 & 2 & 43 & \texttt{\{ deg 65: 4 \}} & \texttt{\{ deg 65: 4 \}} \\
179 & 1 & 89 & \texttt{\{ deg 45: 2 \}} & \texttt{\{ deg 45: 2 \}} \\
181 & 2 & 45 & \texttt{\{ deg 68: 4 \}} & \texttt{\{ deg 68: 4 \}} \\
191 & 1 & 95 & \texttt{\{ deg 48: 2 \}} & \texttt{\{ deg 48: 2 \}} \\
353 & 5 & 11 &  \texttt{\{ deg 160: 96, deg: 171: 65440  \}} & \texttt{\{ deg 149: 1, deg 160: 185, deg 171: 65536 \}}\\
\bottomrule
\end{tabular}
\caption{TS degree distributions and heuristic expected counts (rounded to nearest integer). For each $p$, $p-1=2^s q$ with $q$ odd.}
\end{table}

\newpage

\begin{appendices}

In this appendix, we give a more detailed discussion of the tree construction of root computing polynomials, and how it was used to build a fast algorithm for computing the minimal degree root computing polynomial for a given prime $p$.

\section{Tree Construction Of Root Computing Polynomials}

The following discussion is best motivated through an example. 
Assume $p\equiv 9 \mod 16$. Therefore, $s=3$ and one creates $S_0, S_1, S_2, S_3$ and $f_i \in F_i$. Instead of immediately gluing the $f_i$'s together using Equation \ref{eq: gluing Si}, to get an $f\in \mathcal F$, one can perform the process in steps, as in the following tree diagram:

$$\begin{tikzcd}
	{S_0=\{a\in S: a^q = 1\}} \\
	{S_1=\{a\in S: a^q = \zeta_4\}} && \textcolor{rgb,255:red,92;green,92;blue,214}{{S_0^1=\{a \in S: a^{2q}=1\}}} \\
	{S_2=\{a\in S: a^q = \zeta_4^2\}} && \textcolor{rgb,255:red,214;green,92;blue,92}{{S_1^1=\{a \in S: a^{2q}=\zeta_4^2=-1\}}} && \textcolor{rgb,255:red,92;green,214;blue,214}{{S=\{a\in S: a^{4q}=1\}}} \\
	{S_3=\{a\in S: a^q = \zeta_4^3\}}
	\arrow[color={rgb,255:red,92;green,92;blue,214}, from=1-1, to=2-3]
	\arrow[color={rgb,255:red,214;green,92;blue,92}, from=2-1, to=3-3]
	\arrow[color={rgb,255:red,92;green,214;blue,214}, from=2-3, to=3-5]
	\arrow[color={rgb,255:red,92;green,92;blue,214}, from=3-1, to=2-3]
	\arrow[color={rgb,255:red,92;green,214;blue,214}, from=3-3, to=3-5]
	\arrow[color={rgb,255:red,214;green,92;blue,92}, from=4-1, to=3-3]
\end{tikzcd}$$
where $\zeta_4 \in \F_p$ is a primitive $4$-th root of unity.

For a general prime $p$, one can generalize this by setting 

$$S_i^k=\{a \in S_i: a^{2^kq}= \zeta_{2^{s-1}}^{2^ki}\}, \qquad i = 0 ,... ,2^{s-k-1}-1$$
and $S_i^0:= S_i$. Notice that as a variety, $S_i^k=Z(x^{2^kq}- \zeta_{2^{s-1}}^{2^ki})$. In more generality, given a subvariety $A \subseteq S$ with characterizing equation $g_A(x) = 0$, let $F_A$ be the set of polynomials in the coordinate ring $\faktor{\F_p[x]}{(g_A(x))}$ that compute square roots on $A$, i.e. $$F_A=\{f \in \faktor{\F_p[x]}{(g_A(x))}: f^2(x)=x\}$$

\begin{theorem} \label{th: F_A decomposition}
    Let $A$ be an algebraic subvariety of $S$ and $A_1 \cup A_2 \cup \ldots \cup A_n = A \subset S$, with $A_i$'s pairwise disjoint. Then, 

    \begin{equation*}
        F_{A} \cong F_{A_1} \times F_{A_2} \times \ldots \times F_{A_n}
    \end{equation*}
\end{theorem}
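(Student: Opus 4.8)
The plan is to read this off from the Chinese Remainder Theorem, using that ``computing square roots'' is cut out by the single polynomial equation $f(x)^2 = x$, a condition that is preserved under ring isomorphisms. So the theorem is really the statement that the solution set of $f^2 = x$ in the coordinate ring of $A$ decomposes according to any decomposition of that ring into a product.

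First I would pin down the defining polynomials. Let $g_A(x) \in \F_p[x]$ be the characterizing equation of $A$, so that $F_A = \{ f \in \faktor{\F_p[x]}{(g_A)} : f^2(x) = x \}$, and let $g_{A_i}$ play the same role for $A_i$. Since the $A_i$ are pairwise disjoint subvarieties of $S$ with union $A$, we have $g_A = g_{A_1}\cdots g_{A_n}$ with the $g_{A_i}$ pairwise coprime in $\F_p[x]$; concretely one may take $g_A(x) = \prod_{a\in A}(x-a)$, which is legitimate because $A$ is a finite subset of $S\subseteq \F_p^\times$ and $x^r-1$ is separable mod $p$, and then coprimality is just the statement that linear factors from distinct points are coprime. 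By the Chinese Remainder Theorem there is an isomorphism of $\F_p$-algebras
$$\Phi : \faktor{\F_p[x]}{(g_A)} \;\xrightarrow{\ \sim\ }\; \prod_{i=1}^n \faktor{\F_p[x]}{(g_{A_i})}, \qquad \Phi(h) = \bigl(h \bmod g_{A_1},\, \dots,\, h \bmod g_{A_n}\bigr),$$
which in particular sends the class of $x$ to $(\bar x,\dots,\bar x)$ and satisfies $\Phi(h^2) = \bigl(\Phi(h)_1^2,\dots,\Phi(h)_n^2\bigr)$.

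Next I would transport the defining condition across $\Phi$. For $h \in \faktor{\F_p[x]}{(g_A)}$, the statement $h \in F_A$ means $h^2 = \bar x$; applying $\Phi$ and using that it is a ring homomorphism fixing the class of $x$ coordinatewise, this holds if and only if $\Phi(h)_i^2 = \bar x$ in $\faktor{\F_p[x]}{(g_{A_i})}$ for every $i$, i.e. if and only if $\Phi(h)_i \in F_{A_i}$ for all $i$. Hence $\Phi$ restricts to a bijection $F_A \xrightarrow{\ \sim\ } F_{A_1}\times\cdots\times F_{A_n}$, which is the claimed isomorphism. If one wants to keep track of more structure, each $F_{A_i}$ is a torsor under the group $\{\pm1\}^{A_i}$ of sign changes on the points of $A_i$, and $\Phi$ is equivariant for the evident action, so the identification respects this as well.

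The argument is essentially routine; the only point demanding care is the first step, namely interpreting ``disjoint subvarieties'' correctly so that the defining polynomials really do multiply to $g_A$ and are pairwise coprime. For the finite point-sets occurring in the paper (the sets $S_i$, the $S_i^k$, and more general subsets of $S$) one can always take $g_A = \prod_{a\in A}(x-a)$, for which coprimality is immediate, so I do not expect a genuine obstacle — the whole content of the theorem is the identification of $F_A$ with the solution set of $h^2 = \bar x$ in a product ring, together with the multiplicativity of the CRT isomorphism.
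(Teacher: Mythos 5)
Your proposal is correct and follows essentially the same route as the paper: decompose the coordinate ring via the Chinese Remainder Theorem (the $g_{A_i}$ being pairwise coprime since the $A_i$ are disjoint finite point sets) and then check that the square-root condition is preserved under the isomorphism. The only cosmetic difference is that you verify the condition $h^2=\bar x$ purely ring-theoretically, while the paper checks it by evaluating at the points of each $A_i$; these are equivalent since $g_{A_i}$ is the (separable) vanishing polynomial of $A_i$.
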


\begin{corollary} \label{cor: F_A decomposition}
    Write $\zeta=\zeta_{2^{s-1}}$. For $A=S_i^k$, denote $F_i^k=F_A$. Set $$M_k:=2^{k}q,\qquad \alpha:=\zeta^{2^{k}i}.$$ Then $$ F_i^k \times F_{i+2^{s-2-k}}^k \simeq F_i^{k+1}$$
    is given by 
    
$$
(f_i(x), f_j(x)) \mapsto f(x)=
\frac{1}{2\alpha}
\Bigl[
x^{M_k}\bigl(f_i(x)-f_j(x)\bigr)
+\alpha \bigl(f_i(x)+f_j(x)\bigr)
\Bigr]
$$

Consequently, 
\begin{equation}
\deg(f)= M^k +\deg(f_i-f_j)=2^kq + \deg(f_i-f_j) \label{eq:one level gluing}
\end{equation}
\end{corollary}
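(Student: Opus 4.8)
The plan is to recognize the displayed map as the explicit Chinese Remainder reconstruction underlying Theorem~\ref{th: F_A decomposition}, and then to extract the degree formula by a leading-term comparison. I would first record the combinatorial input. Since $\zeta$ has order $2^{s-1}$ we have $\zeta^{2^{s-2}}=-1$, so with $j:=i+2^{s-2-k}$ one gets $\zeta^{2^k j}=-\zeta^{2^k i}=-\alpha$. Squaring the defining relations, $a\in S_i^{k+1}$ iff $(a^{M_k})^2=\alpha^2$ iff $a^{M_k}=\pm\alpha$ iff $a\in S_i^k$ or $a\in S_j^k$, and these last two sets are disjoint because $\alpha\neq 0$ and $p$ is odd. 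Hence $S_i^{k+1}=S_i^k\sqcup S_j^k$, and dually $x^{M_{k+1}}-\alpha^2=(x^{M_k}-\alpha)(x^{M_k}+\alpha)$ is a product of coprime factors (here $M_{k+1}=2M_k$ and the level-$(k+1)$ constant is $\zeta^{2^{k+1}i}=\alpha^2$). By Theorem~\ref{th: F_A decomposition} we already get an abstract bijection $F_i^{k+1}\cong F_i^k\times F_j^k$ via restriction, $f\mapsto (f \bmod (x^{M_k}-\alpha),\ f \bmod (x^{M_k}+\alpha))$; the task is to invert it explicitly.

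Next I would exhibit the CRT idempotents $e_i:=\tfrac{x^{M_k}+\alpha}{2\alpha}$ and $e_j:=\tfrac{\alpha-x^{M_k}}{2\alpha}$ in $\F_p[x]/(x^{M_{k+1}}-\alpha^2)$ and check, by the substitutions $x^{M_k}\mapsto\pm\alpha$, that $e_i+e_j=1$, that $e_i\equiv 1$ and $e_j\equiv 0\pmod{x^{M_k}-\alpha}$, and that $e_i\equiv 0$ and $e_j\equiv 1\pmod{x^{M_k}+\alpha}$. Then $f:=f_i e_i+f_j e_j$ is the unique class restricting to $f_i$ and $f_j$, and expanding gives exactly $f=\tfrac{1}{2\alpha}\bigl[x^{M_k}(f_i-f_j)+\alpha(f_i+f_j)\bigr]$. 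To see $f\in F_i^{k+1}$: for $a\in S_i^k$ we have $a^{M_k}=\alpha$, so $e_i(a)=1$, $e_j(a)=0$, hence $f(a)=f_i(a)$ and $f(a)^2=a$; symmetrically $f(a)=f_j(a)$ and $f(a)^2=a$ for $a\in S_j^k$. As $S_i^{k+1}=S_i^k\sqcup S_j^k$, the polynomial $f$ computes square roots on $S_i^{k+1}$; and being the inverse of the restriction bijection of Theorem~\ref{th: F_A decomposition}, the map $(f_i,f_j)\mapsto f$ is itself a bijection.

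For the degree identity \eqref{eq:one level gluing} I would compare the degrees of the two summands of $2\alpha f = x^{M_k}(f_i-f_j)+\alpha(f_i+f_j)$. The representatives of $F_i^k$ and $F_j^k$ have degree $<M_k$, so the summand $\alpha(f_i+f_j)$ has degree $<M_k$. On the other hand $f\in F_i^{k+1}$ computes square roots on the $2M_k\ge 2$ points of $S_i^{k+1}$, so $f$ is nonconstant, and $(x^{M_{k+1}}-\alpha^2)\mid(f(x)^2-x)$ forces $\deg f\ge M_k$. Therefore the leading term of $f$ must come from $x^{M_k}(f_i-f_j)$; in particular $f_i\neq f_j$, no cancellation occurs between the two summands, and $\deg f=M_k+\deg(f_i-f_j)=2^k q+\deg(f_i-f_j)$.

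I expect no genuine obstacle: everything is routine Chinese Remainder bookkeeping together with the divisibility bound already used in the Introduction. The only step needing a little care is the degree computation, where one must rule out the degenerate case $f_i=f_j$ (which would kill the first summand); this is precisely where the bound $\deg f\ge M_k$ enters.
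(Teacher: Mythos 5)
Your proposal is correct and follows essentially the same route as the paper: the gluing map is exactly the explicit Chinese Remainder reconstruction coming from the coprime factorization $x^{M_{k+1}}-\alpha^2=(x^{M_k}-\alpha)(x^{M_k}+\alpha)$, which is how the paper proves it via Theorem~\ref{th: F_A decomposition}. Your write-up is in fact a bit more complete than the printed proof, since you verify the idempotent formula explicitly and justify the degree identity by ruling out $f_i=f_j$ through the divisibility bound $\deg f\ge M_k$, a degenerate case the paper only addresses later in the appendix (via its lower-bound theorem) rather than in the proof of the corollary itself.
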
 

\begin{proof}
    It follows immediately from the Chinese Remainder Theorem that

    \begin{equation*}
        \faktor{\F_p[x]}{(g_{A})} \cong \faktor{\F_p[x]}{(g_{A_1})} \times \faktor{\F_p[x]}{(g_{A_2})} \times \ldots \times \faktor{\F_p[x]}{(g_{A_n})}
    \end{equation*}

    It remains to show that this isomorphism preserves computing square roots. It is clear that if $f \in F_{A}$ then $f \pmod{g_{A_i}}$ computes square roots on $A_i$. Now, suppose $(f_1, \ldots f_2) \in F_{A_1} \times F_{A_2} \times \ldots \times F_{A_n}$, and let $f$ be their composition using the Chinese Remainder Theorem. Then, $\forall a \in A, \, \exists 1 \leq i \leq n : a \in A_i$, so writing $f(x) = h(x)g_{A_i}(x) + f_i(x)$, we see that $f(a) = h(a)g_{A_i}(a) + f_i(a) = 0 \pm\sqrt{a} = \pm\sqrt{a}$.
\end{proof}

\subsection{Lower Bounds on \texorpdfstring{$\deg f$}{deg f} for all odd primes \texorpdfstring{$p$}{p}}
We start by recalling a couple of known facts on the lower bound of the minimum degree of polynomials that compute square roots.

\begin{enumerate}
    \item When $p\equiv 3 \mod 4$, then $\min \deg(f) = \frac{p+1}{4}$
    \item When $p \equiv 1 \mod 4$, we have $\min \deg(f) \geq \frac{p-1}{3}$ by \cite[Theoroem ~1.1]{MR4774704}
\end{enumerate}
This second statement can be easily generalized to any $f\in F_i^k$ as follows:
\begin{theorem}
    Assume $s \geq 2$, $1 \leq k \leq s-1$ and let $f \in F_i^k$. Set $M_k=2^kq$. Then 
    $$   \frac{2M_k}{3}=\frac{p - 1}{3 \cdot 2^{s-k-1}}\leq \deg f \leq M_k=\frac{p-1}{2^{s-k}}$$

    For $k=0$ and $f \in F_i^0$, instead we have $$\deg(f) \geq \frac{q+1}{2}$$
\end{theorem}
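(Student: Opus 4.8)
The plan is to treat the upper bound and the case $k=0$ quickly, and then reduce the main lower bound to the Kedlaya--Kopparty estimate. For the upper bound: any $f\in F_i^k$ is an element of $\F_p[x]/(x^{M_k}-\zeta^{2^k i})$, so it has a representative of degree $<M_k$; equivalently $f$ is determined by interpolation on the $M_k=|S_i^k|$ points of $S_i^k$, whence $\deg f\le M_k$. For $k=0$ we have $S_i^0=S_i$ and $F_i^0=F_i$, so the assertion $\deg f\ge\frac{q+1}{2}$ is exactly (the proof of) Theorem~\ref{th: minimal f_k that compute S_k}: there $x^q-\zeta^i\mid f(x)^2-x$, so $2\deg f\ge q$ for every nonconstant $f$ (constants only occur when $q=1$), and I would cite that theorem directly.

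For the main case $1\le k\le s-1$: since $M_k=2^k q$ is even, $S_i^k=\{a:a^{M_k}=\zeta^{2^k i}\}$ is stable under $a\mapsto -a$. Moreover $\alpha:=\zeta^{2^k i}$ has order dividing $2^{s-1-k}$, hence is an $M_k$-th power in $\F_p^\times$ (the $M_k$-th powers form $\mu_{2^{s-k}}$); fix $\beta\in\F_p^\times$ with $\beta^{M_k}=\alpha$, so that $\operatorname{ord}(\beta)\mid M_k\cdot 2^{s-1-k}=\frac{p-1}{2}$ and $\beta$ is a square, with a chosen root $\sqrt\beta\in\F_p^\times$. Then $S_i^k=\beta\cdot\mu_{M_k}$, and the map
$$
F_i^k\longrightarrow F_{\mu_{M_k}},\qquad f(x)\longmapsto g(x):=\frac{1}{\sqrt\beta}\,f(\beta x),
$$
is a degree-preserving bijection, since $g(\omega)^2=\beta^{-1}f(\beta\omega)^2=\omega$ for all $\omega\in\mu_{M_k}$ (using $\beta\omega\in S_i^k$). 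It thus suffices to bound $\deg g$ for $g\in F_{\mu_{M_k}}$, and here I would invoke the Kedlaya--Kopparty lower bound \cite{MR4774704}: their proof that a polynomial computing square roots on the full group of squares $\mu_{(p-1)/2}$ has degree $\ge\frac{p-1}{3}$ uses only that this group is closed under negation and that the square roots of its elements---and of $-1$---already lie in $\F_p$. Both facts hold for $\mu_{M_k}$: it is negation-stable because $M_k$ is even, and $\mu_{2M_k}\subseteq\F_p^\times$ because $2M_k=2^{k+1}q\mid 2^s q=p-1$, which is exactly where the hypothesis $k\le s-1$ enters. Hence the same argument, with $N=M_k$, yields $\deg g\ge\frac{2M_k}{3}$, so $\deg f\ge\frac{2M_k}{3}=\frac{2\cdot 2^k q}{3}=\frac{p-1}{3\cdot 2^{s-k-1}}$.

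The step that I expect to need the most care is this last one: verifying that the Kedlaya--Kopparty argument is purely structural and does not covertly use $|S|=\frac{p-1}{2}$ or quadratic reciprocity attached to $p$, but only the cyclic group $\mu_{M_k}$, its stability under negation, and the presence of the relevant roots of unity in $\F_p$. If some $p$-specific input does appear, I would instead re-run their degree count directly for the subgroup $\mu_{M_k}\subseteq\F_p^\times$, and I expect only cosmetic changes---presumably what ``easily generalized'' refers to. Finally, I would flag that very small primes---for instance $p=5$, where $M_1=2$ and $2x-1$ computes square roots---are genuine exceptions to the clean $\tfrac23$ bound and must be excluded.
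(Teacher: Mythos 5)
Your proposal follows essentially the same route as the paper: upper bound by interpolation on the $M_k$ points, the $k=0$ case via Theorem \ref{th: minimal f_k that compute S_k}, and the main lower bound by adapting the Kedlaya--Kopparty argument to the subgroup $\mu_{M_k}$ --- the paper does not merely cite \cite{MR4774704} but re-runs the derivative computation for the vanishing polynomial $X^{M_k}-1$ (i.e.\ your fallback option), and the structural inputs you isolate ($M_k$ even to exclude the case $f^2=X^{M_k+1}$, $2M_k\mid p-1$ so all the needed square roots lie in $\F_p$, and degrees $<p$ for the logarithmic-derivative step) are exactly what that adaptation uses, so your hedge resolves in your favor. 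Your explicit scaling bijection $f(x)\mapsto \beta^{-1/2}f(\beta x)$ is a welcome justification of the paper's unexplained ``without loss of generality assume $f\in F_0^k$''. Your closing caveat is also substantive rather than cosmetic: when $M_k=2$ (e.g.\ $p=5$, or $k=1$ for any Fermat prime such as $p=17$) linear interpolation on the two points of $S_i^1$ produces degree-$1$ polynomials, so the claimed bound $\deg f\ge \tfrac{2M_k}{3}=\tfrac43$ genuinely fails; correspondingly, the paper's estimate $\deg B\le 2d-M_k$ is wrong there (since $B=X-A$ can have degree $1>2d-M_k$), although for $M_k\ge 4$ the argument survives because one still has $\deg(2f'B),\deg(fB')\le d<M_k-1$, so only the $M_k=2$ case needs to be excluded from the statement.
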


\begin{proof}
The proof is almost identical to \cite[Theorem~1.1]{MR4774704}, simply adjusting it to make sure we are working over $F_i^k$. For convenience, we present it here again.

The upper bound is because $f(x)$ interpolates $M_k$ points. 

When $k=0$, this is Theorem \ref{th: minimal f_k that compute S_k}. So assume $k>0$.

Without loss of generality, assume $f(x) \in F_0^k$. Set $M_k=2^kq = \frac{p-1}{2^{s-k}}$. Recall that the vanishing polynomial on $S_0^k$ is $X^{M_k}-1$.

Assume by contradiction that $f(X)$ is of degree $d < \frac{2M_k}{3}$. Then, we have

    $$
    f^2(X)-X=(X^{M_k}-1)A(X)
    $$

    Rewrite this as \begin{equation}\label{eq:kedlaya1}
        f^2(X)=A(X)X^{M_k}+B(X), \qquad B(X)=X-A(X)
    \end{equation}
    
    We remark that:
    \begin{itemize}
        \item $\deg(f(X)) = d$
        \item $\deg(A(X)), \deg(B(X)) \leq 2d - M_k$
        \item $A(X) \neq 0$ as that would imply $f^2(X)=X$.
        \item $B(X) \neq 0$. Otherwise, $A(X) = X$, and $f^2(X) = X^{M_k + 1}$. But that would imply $M$ is odd and therefore $k=0$.
    \end{itemize}

    \bigskip

    Taking (formal) derivatives of both sides of \ref{eq:kedlaya1}, we get:

    \begin{equation}
        2f(X)f'(X) = A'(X) X^{M_k} + A(X) M_k X^{M_k-1} +B'(X) \label{eq:kedlaya2} 
    \end{equation}

    Computing $2f(X)^2f'(X)$ in two ways using \ref{eq:kedlaya1} and \ref{eq:kedlaya2}, we get:

    \begin{equation*}
   \underbrace{(2f'A - fA')X^{M_k} - fA M X^{M_k-1}}_{\equiv 0 \pmod{X^{M_k-1}}} + (2f'B - fB') = 0
    \end{equation*}

    Hence $$2f'(X)B(X)-f(X)B'(X) \equiv 0 \mod X^{M_k-1}$$

  By the degree bounds, 

$$\deg(2f'B), \deg(fB') \leq (d-1)+(2d-M_k) =3d-M_k-1 <M-1$$

so the congruence forces the polynomial identity $2f'B=fB'$

    Since $B(X) \neq 0$, we get $\frac{2f'(X)}{f(X)} = \frac{B'(X)}{B(X)}$. Since $\deg(f), deg(B) < p$, by a basic property of logarithmic derivatives, this implies $f(X)^2 = \lambda B(X)$ for some nonzero $\lambda$, contradicting the fact that $A(X) \neq 0$.
\end{proof}

We summarize our discussion so far in the following diagram: 
\begin{figure}[h]
\centering
\resizebox{\linewidth}{!}{%
\begin{tikzpicture}[
  >=Stealth,
  every node/.style={font=\small},
  lab/.style={text=purple!70!black},
  blueNode/.style={circle, draw=blue!60, fill=blue!20, inner sep=1.6pt},
  edge/.style={blue, line width=0.9pt}
]

\def\xA{0}
\def\xB{3.6}
\def\xC{7.2}
\def\xD{10.8}
\def\xE{14.4}

\node[lab] at (\xA,3.2) {$k=0$};
\node[lab] at (\xB,3.2) {$k=1$};
\node[lab] at (\xC,3.2) {$k$};
\node[lab] at (\xD,3.2) {$k=s-2$};
\node[lab] at (\xE,3.2) {$k=s-1$};

\node[lab] at (\xA,2.6) {$M_0=q$};
\node[lab] at (\xB,2.6) {$M_1=2q$};
\node[lab] at (\xC,2.6) {$M_k=2^kq$};
\node[lab] at (\xD,2.6) {$M_{s-2}=2^{s-2}q$};
\node[lab] at (\xE,2.6) {$M_{s-1}=2^{s-1}q=\dfrac{p-1}{2}$};

\node[blueNode] (A0)   at (\xA, 1.2) {};
\node[blueNode] (A1)   at (\xA, 0.0) {};
\node            at (\xA,-1.2) {$\vdots$};
\node[blueNode] (Aend) at (\xA,-2.4) {};

\node[blue,anchor=west] at ([xshift=2pt]A0.east) {$F_0^{0}$};
\node[blue,anchor=west] at ([xshift=2pt]A1.east) {$F_1^{0}$};
\node[blue,anchor=west] at ([xshift=2pt]Aend.east) {$F_{2^{\,s-1}}^{0}$};

\node[blueNode] (B0)   at (\xB, 0.8) {};
\node[blueNode] (B1)   at (\xB,-0.8) {};
\node            at (\xB,-2.0) {$\vdots$};
\node[blueNode] (Bend) at (\xB,-3.2) {};

\node[blue,anchor=west] at ([xshift=2pt]B0.east) {$F_0^{1}$};
\node[blue,anchor=west] at ([xshift=2pt]B1.east) {$F_1^{1}$};
\node[blue,anchor=west] at ([xshift=2pt]Bend.east) {$F_{2^{\,s-2}}^{1}$};

\node[blueNode] (C0)   at (\xC, 0.6) {};
\node            at (\xC,-0.6) {$\vdots$};
\node[blueNode] (Cend) at (\xC,-1.8) {};

\node[blue,anchor=west] at ([xshift=2pt]C0.east) {$F_0^{k}$};
\node[blue,anchor=west] at ([xshift=2pt]Cend.east) {$F_{2^{\,s-k-1}}^{k}$};

\node[blueNode] (D0) at (\xD, 0.4) {};
\node[blueNode] (D1) at (\xD,-0.4) {};

\node[blue,anchor=west] at ([xshift=2pt]D0.east) {$F_0^{\,s-2}$};
\node[blue,anchor=west] at ([xshift=2pt]D1.east) {$F_{1}^{\,s-2}$};

\node[blueNode] (E0) at (\xE, 0.0) {};
\node[blue,anchor=west] at ([xshift=2pt]E0.east) {$F_0^{\,s-1}=F_S$};

\node[lab] at (\xA,-4.2) {$\displaystyle \frac{q+1}{2}\ \le\ \deg(f)\ \le\ q-1$};
\node[lab] at (\xC,-4.2) {$\displaystyle \frac{2M_k}{3}\ \le\ \deg(f)\ \le\ M_k-1$};
\node[lab] at (\xE,-4.2) {$\displaystyle \frac{p-1}{3}\ \le\ \deg(f)\ \le\ \frac{p-1}{2}$};

\end{tikzpicture}
}
\end{figure}

In particular, if $f \in \mathcal F$ is minimal, then 
$$\frac{p-1}{3}=\frac{2M_{s-1}}{3} \leq \deg(f) \leq \deg(f_{TS})=M_{s-1}-\frac{q-1}{2} \leq M_{s-1}-1$$

\subsection{Algorithm for finding minimal root computing polynomials using tree construction}

We now restrict to primes $p\equiv 1\pmod 4$. A naïve strategy would enumerate all
$2^{\frac{p-1}{2}}$ root–computing polynomials on $S=\mu_{\frac{p-1}{2}}$ and then select those
of minimal degree. Since each polynomial can be represented modulo $X^{\frac{p-1}{2}}-1$ with
$O(p)$ coefficients, this costs
$$
O\!\left(p\cdot 2^{\frac{p-1}{2}}\right).
$$
Using the tree structure from \S4, we reduce this to
$$
O\!\left(p\cdot 2^{\frac{p-1}{4}}\right).
$$

Write $p-1=2^{s}q$ with $q$ odd (so $s\ge 2$). At level $k=s-2$ we have
$M_{s-2}=2^{\,s-2}q=\frac{p-1}{4}$ and a partition
$$
F^{\,s-1}_0 \;\leadsto\; F^{\,s-2}_0\ \sqcup\ F^{\,s-2}_1,
$$
with $|F^{\,s-2}_0|=|F^{\,s-2}_1|=2^{\frac{p-1}{4}}$. By Lemma \ref{lem: bijection of S_i and F_i} there
is a bijection $F^{\,s-2}_0 \cong F^{\,s-2}_1$.

\paragraph{Step 1 — Build the penultimate families.}
Compute all polynomials in $F^{\,s-2}_0$ (e.g.\ via the Lagrange/DFT formula of \S3.1), then obtain
$F^{\,s-2}_1$ from the bijection. Since each representative has degree $<M_{s-2}=O(p)$, the work
here is
$$
O\!\left(p\cdot 2^{\frac{p-1}{4}}\right).
$$

\paragraph{Step 2 — Order by leading–coefficient profiles.}
Represent each $f\in F^{\,s-2}_i$ by its coefficient vector modulo $X^{M_{s-2}}-1$,
$$
f(x)=\sum_{j=0}^{M_{s-2}-1} c_j\,x^j,
$$
and sort $F^{\,s-2}_0$ and $F^{\,s-2}_1$ lexicographically from highest degree to lowest (MSD
radix over $\mathbb{F}_p$). With $M_{s-2}=O(p)$ and $|F^{\,s-2}_i|=2^{\frac{p-1}{4}}$, this costs
$$
O\!\left(M_{s-2}\cdot 2^{\frac{p-1}{4}}\right)=O\!\left(p\cdot 2^{\frac{p-1}{4}}\right).
$$

\paragraph{Step 3 — Glue with maximal prefix match.}
For $f_0\in F^{\,s-2}_0$ and $f_1\in F^{\,s-2}_1$, the one–level gluing formula produces
$f\in F^{\,s-1}_0$ with
$$
\deg f \;=\; M_{s-2}\;+\;\deg(f_0-f_1)\qquad\text{(provided }f_0\neq f_1\text{)}.
$$
Thus minimizing $\deg f$ is equivalent to minimizing $\deg(f_0-f_1)$, i.e.\ maximizing the length
of the common high–degree prefix of the two coefficient vectors. After the MSD sort, perform a
bucketed two–pointer sweep: group entries with the same leading coefficient, then recurse to the next
coefficient within each bucket. Each pass is linear in the list sizes, so the total here remains
$$
O\!\left(p\cdot 2^{\frac{p-1}{4}}\right).
$$
(Edge case: the cancellation $f_0=f_1$ would give $\deg f<M_{s-2}=\frac{p-1}{4}$, which is ruled
out by the lower bound $\deg f\ge \tfrac{2}{3}M_{s-1}=\tfrac{p-1}{3}$ for $s\ge 2$; see
Theorem~4.2.)

\paragraph{Conclusion.}
Combining the three steps, we enumerate all minimal–degree root–computing polynomials in time
$$
O\!\left(p\cdot 2^{\frac{p-1}{4}}\right).
$$
(A parallel implementation simply shards the construction of $F^{\,s-2}_0$ and the bucketed
matching across threads; memory usage is linear in the number of polynomials stored.)

A multi-threaded C++ implementation of this can be found here \cite{Noah_Github}

\subsection{An example}

For $p=41$, a minimal polynomial in $\mathcal F$ is of degree $17$, while the Tonelli-Shanks polynomial is of degree $18$. An example of a minimal polynomial is 
 \begin{align*}
 f(x)=& 15x^{17} + 32x^{16} + 4x^{15} + 12x^{14} + 37x^{13} + 25x^{12} + 5x^{11} + x^{10} + x^{9}  \\ & + x^{8} + 9x^{6} + 29x^{5} + 6x^{4} + 36x^{3} + 7x^{2} + 33x + 35
 \end{align*} 
 while the Tonelli-Shanks polynomial is $$f_{TS}(x)= 26 x^{18} +10 x^{13} +11x^8 +36x^3$$

Below are the tree decompositions of these polynomials, as in Section 4.

First is the tree decomposition of $f_{TS}(x)$:

$$\begin{tikzcd}
	{f_0=21x^3} \\
	{f_1=19x^3} && \textcolor{rgb,255:red,92;green,92;blue,214}{{f_0^1=39x^8+23x^3}} \\
	{f_2=25x^3} && \textcolor{rgb,255:red,214;green,92;blue,92}{{f_1^1= 35x^8 +6x^3}} && \textcolor{rgb,255:red,92;green,214;blue,214}{{f_{TS}(x)=2x^{18}+29x^{13}+37x^8+35x^3}} \\
	{f_3=34x^3}
	\arrow[color={rgb,255:red,92;green,92;blue,214}, from=1-1, to=2-3]
	\arrow[color={rgb,255:red,214;green,92;blue,92}, from=2-1, to=3-3]
	\arrow[color={rgb,255:red,92;green,214;blue,214}, from=2-3, to=3-5]
	\arrow[color={rgb,255:red,92;green,92;blue,214}, from=3-1, to=2-3]
	\arrow[color={rgb,255:red,92;green,214;blue,214}, from=3-3, to=3-5]
	\arrow[color={rgb,255:red,214;green,92;blue,92}, from=4-1, to=3-3]
\end{tikzcd}$$

And second is the tree decomposition of $f(x)$:

$$
\scalebox{0.82}{%
\begin{tikzcd}[column sep=1.4em, row sep=0.6ex, ampersand replacement=\&]
  {f_0=19x^4+33x^3+6x^2+38x+28} \\
  {f_1=26x^4+31x^3+35x^2+30x+14}
  \&\&
  \textcolor[RGB]{92,92,214}{%
    \begin{aligned}[t]
      f_0^{1}=\;& x^{9}+x^{8}+15x^{7}\\
               & {}+33x^{5}+18x^{4}+32x^{3}\\
               & {}+32x^{2}+38x+36
    \end{aligned}}
  \\
  {f_2=17x^4+31x^3+17x^2+38x+3}
  \&\&
  \textcolor[RGB]{214,92,92}{%
    \begin{aligned}[t]
      f_1^{1}=\;& x^{9}+x^{8}+26x^{7}\\
               & {}+18x^{6}+25x^{5}+35x^{4}\\
               & {}+40x^{3}+23x^{2}+28x+34
    \end{aligned}}
  \&\&
  \textcolor[RGB]{92,214,214}{%
    \begin{aligned}[t]
      f(x)=\;& 15x^{17}+32x^{16}+4x^{15}+12x^{14}\\
            & {}+37x^{13}+25x^{12}+5x^{11}+x^{10}\\
            & {}+x^{9}+x^{8}+9x^{6}+29x^{5}\\
            & {}+6x^{4}+36x^{3}+7x^{2}+33x+35
    \end{aligned}}
  \\
  {f_3=3x^4+8x^3+11x^2+26x+13}
  \arrow[color={rgb,255:red,92;green,92;blue,214}, from=1-1, to=2-3]
  \arrow[color={rgb,255:red,214;green,92;blue,92}, from=2-1, to=3-3]
  \arrow[color={rgb,255:red,92;green,214;blue,214}, from=2-3, to=3-5]
  \arrow[color={rgb,255:red,92;green,92;blue,214}, from=3-1, to=2-3]
  \arrow[color={rgb,255:red,92;green,214;blue,214}, from=3-3, to=3-5]
  \arrow[color={rgb,255:red,214;green,92;blue,92}, from=4-1, to=3-3]
\end{tikzcd}
}%
$$

We observe that for the Tonelli-Shanks polynomial, there is no degree reduction at any level of the tree, while for the minimal polynomial $f(x)$, there is a degree reduction in the last step. This pattern seems to hold for every polynomial which we have tested: If $f(x)$ is a \textit{minimal} polynomial, different from the Tonelli-Shanks, then the degree cancellation happens only at the last step of the tree. We leave this as an open question for future research.
\end{appendices}

\newpage
\printbibliography
\end{document}